\newtheorem{teo}{Theorem}[section]
\newtheorem{prop}[teo]{Proposition}
\newtheorem{lemma}[teo]{Lemma}
\newtheorem{example}[teo]{Example}
\newtheorem{oss}[teo]{Remark}
\newcommand{\compact}{\subset\subset}    
\newcommand{\de}{\partial}
\newcommand{\om}{\omega}
\newcommand{\Om}{\Omega}
\newcommand{\eps}{\varepsilon}
\newcommand{\e}{\varepsilon}
\newcommand{\g}{\gamma}
\newcommand{\gp}{\dot{\gamma}}
\newcommand{\gpp}{\ddot{\gamma}}
\numberwithin{equation}{section}
\def\step#1#2{\par\noindent{\underline{\it Step~#1.}}\emph{ #2}\\}
\newcommand{\strip}{{\mathcal S}}
\renewcommand{\bowtie}{\mathcal W}
\newcommand{\pino}{\mathcal P}
\newcommand{\ears}{\mathcal Q}
\newcommand{\mdiv}{\mathop{\mathrm{div}}}
\newcommand{\difsim}{\Delta\,}
\DeclareMathOperator*{\dist}{dist}
\DeclareMathOperator*{\reach}{\mathcal R}
\newcommand{\R}{{\mathbb R}}
\newcommand{\N}{{\mathbb N}}
\newcommand{\Hau}{{\mathcal H}}
\def\H{{\mathcal H}}
\title{On the Cheeger sets in strips and non-convex domains}
\author[G.P.~Leonardi]{Gian Paolo Leonardi}
\address{Dipartimento di Scienze Fisiche, Informatiche e Matematiche, Universit{\`a} di Modena e Reggio Emilia, Via Campi 213/b, 41100
    Modena, Italy}
\email{gianpaolo.leonardi@unimore.it}
\author[A.~Pratelli]{Aldo Pratelli}
\address{Department of Mathematics, University of Erlangen, Cauerstrasse 11, 90158 Erlangen, Germany}
\email{pratelli@math.fau.de}
\keywords{Cheeger sets, constant mean curvature, isoperimetric}
\subjclass[2010]{
49Q10
, 53A10
, 35P15
}
\begin{document}
\begin{abstract}
In this paper we consider the Cheeger problem for non-convex domains, with a particular interest in the case of planar strips, which have been extensively studied in recent years. Our main results are an estimate on the Cheeger constant of strips, which is stronger than the previous one known from~\cite{KrePra2011}, and the proof that strips share with convex domains a number of crucial properties with respect to the Cheeger problem. Moreover, we present several counterexamples showing that the same properties are not valid for generic non-convex domains.
\end{abstract}

\maketitle

\section{Introduction}

The Cheeger problem is an isoperimetric-like problem, which has been extensively studied in the last decades in different contexts, ranging from Riemannian Geometry to Calculus of Variations, also because of its connections with eigenvalue problems. Its formulation is very simple: given a domain $\Omega\subseteq \R^n$, with $n\geq 2$, one is asked to compute or estimate its \emph{Cheeger constant}, defined as
\[
h(\Om) := \inf \left\{\frac{P(F)}{|F|}\,:\ F\subseteq\Om,\ |F|>0\right\}\,,
\]
where $|F|$ and $P(F)$ denote respectively the volume and perimeter of a Borel set $F\subseteq\R^n$. Despite this so simple formulation, many non-trivial questions arise, and they are object of deep investigation. For quite a large class of domains $\Omega$, it is possible to say that the above infimum is actually a minimum, and in this case each set $E$ realizing the minimum is called a \emph{Cheeger set in $\Omega$}; if the whole set $\Omega$ is a minimizer, then it is simply called a \emph{Cheeger set}; it is clear that any set $E$ which is a Cheeger set for some domain $\Omega\supseteq E$, is also a Cheeger set. One usually refers to the ``Cheeger problem'' both for the computation, or the estimation, of the constant $h(\Omega)$, and for the characterization of Cheeger sets in $\Omega$.\par

Among various geometric properties, that will be discussed in more detail later, a particularly useful one is that the free boundary of any Cheeger set $E$ in $\Omega$, that is, the part of $\partial E$ which is in the interior of $\Omega$, is the union of a singular set of Hausdorff dimension at most $n-8$ and a smooth hypersurface with constant mean curvature, and this curvature coincides with the inverse of the constant $h(\Omega)$. This is particularly useful in the two-dimensional case, because then one deduces that $\partial E \cap \Omega$ is made by arcs of circle of radius $1/h(\Omega)$, which is a very strong geometric constraint. Another very important case is the one of convex domains. In particular, for a convex $2$-dimensional domain $\Omega$, it is known that a Cheeger set $E$ exists, is unique, and it coincides with the union of all balls of radius $1/h(\Omega)$ which are contained in $\Omega$, thus $E=E_r+B_r$ where $r=1/h(\Omega)$ and $E_r$ is the set of all points of $\Omega$ having distance at least $r$ from $\partial\Omega$; moreover, the \emph{inner Cheeger formula} $|E_r|=\pi r^2$ for the area of $E_r$ holds. All these properties are extremely useful, in particular the ``union of balls property'' completely characterizes the Cheeger set once the Cheeger constant is known, and on the other hand the inner Cheeger formula allows to explicitly compute $r$, thus $h(\Omega)$, because when $r$ increases the set $E_r$ become smaller and smaller, thus there is a unique $r$ such that the inner Cheeger formula holds.\par

The interested reader can look for instance~\cite{ButCarCom2007,CarComPey2009,CasChaMolNov2008,CasFacMei2009,CasMirNov2010,IonLac2005,Strang2010}, where further applications, developments and extensions of the Cheeger problem are considered (see also the survey paper \cite{Leonardi2014}).\par\medskip

In this paper we will mainly deal with the case of strips, which are basically bended rectangles, that is, curved two-dimensional tubes. The strips, or their three-dimensional counterpart, namely, the waveguides, are quite studied since several years, in particular they are crucial in applications, for instance in engineering and in medicine for their optical properties. Understanding the spectral properties of waveguides or strips is very important, and --as said above-- this is deeply connected with the Cheeger problem. Particularly relevant is the limit case when strips or waveguides become extremely thin; up to a rescaling, it is equivalent but notationally simpler to assume that the thickness is fixed and the length explodes. Some more information on waveguides and their importance can be found in~\cite{DucExn1995,KreKri2005} and in the references therein, while the mathematical study of the Cheeger problem for strips has been also carried on in~\cite{KrePra2011}, see also~\cite{PrSa14}. The main goal of this paper is to observe that strips behave like convex domains, even if they are not convex; in other words, this shows that bending a rectangle to transform it into a strip does not change too much the situation for what concerns the Cheeger problem. More precisely, we will show that all the properties listed above and valid for convex sets, are actually valid also for strips; as a consequence, we derive an estimate from above and below for the Cheeger constant of a strip, which is stronger than the analogous estimate found in~\cite{KrePra2011}. We will present also several examples, some of which new, to underline that the above-mentioned properties are not valid for generic sets, but they are really specific for convex sets and for strips.\par\medskip

The plan of the paper is the following. In Section~\ref{section:basic} we briefly recall the definition of perimeter and its main properties, then we set the Cheeger problem and we list some of the main results: these are well-known, with the exception of Proposition \ref{prop:CheegerGenProp} (vii) (a weak form of a well-known ``tangential contact property'' of Cheeger sets) and of Theorem \ref{teo:hcontinua} (two continuity properties of the Cheeger constant with respect to $L^{1}$ and BV-strict topologies), and mostly focused on the convex two-dimensional case. Then we show a technical but useful result about ``rolling balls'', Lemma~\ref{lemma:movingball}. Section~\ref{section:strip} is the main section of the paper: here we give the definition of the strips and we prove all results concerning them. Finally, in Section~\ref{sect:examples} we collect several examples, some well-known and other original (and possibly interesting in themselves), in order to stress the peculiarity of convex domains, as well as of strips, with respect to the Cheeger problem.

\section{Some preliminary results on the Cheeger problem\label{section:basic}}

This Section is devoted to present some important facts about the Cheeger problem. After a first, very brief introduction on the concept of perimeter, we will start, in Section~\ref{sectdef}, with the relevant definitions and the most important, general properties of the Cheeger problem (in addition to the well-known ones, we prove two facts, namely Proposition \ref{prop:CheegerGenProp} (vii) and Theorem \ref{teo:hcontinua}, that we have not been able to locate in the existing literature).  Then, in Section~\ref{sectconv}, we will discuss some known facts concerning the convex case, and finally, in Section~\ref{sectnew}, we will give a couple of technical results valid in the planar case: even though they seem quite intuitive, we could not find them written anywhere.

\subsection{Basic facts about the perimeter}

Let us start with a very quick introductory section about the notion of perimeter. The initiated reader can clearly skip it, but also any other reader can do the same, and limit himself to read this paper considering the case of sufficiently regular sets, where the notion of perimeter is well-known.\par

Given any Borel set $E\subseteq\R^n$, we will denote by $|E|$ its Lebesgue measure (which we will also shortly call \emph{volume}), and by $P(E)$ its \emph{perimeter}. While for a sufficiently smooth set, for instance a set with Lipschitz boundary, or a polyhedron, the standard concept of perimeter is simply the $(n-1)$-area of the boundary, in the general framework of Borel sets this doesn't work properly. The correct, general definition of perimeter of a Borel set $E\subseteq \R^n$ is then the following,
\[
P(E) := \sup\left\{\int_{E}\mdiv g:\ g\in C^{1}_{c}(\R^{n};\R^{n}),\ |g|\leq 1\right\}\,.
\]
Whenever $P(E)<+\infty$, we will say that $E$ is a \emph{set of finite perimeter}; if this is the case, it can be proved that the perimeter of $E$ coincides with the total variation of the distributional gradient of the characteristic function of $E$, namely,
\[
P(E) = |D\chi_{E}|(\R^{n})\,.
\]
This allows us to define also the \emph{relative perimeter} $P(E;A) := |D\chi_{E}|(A)$ for any pair of Borel sets $A,\,E\subseteq \R^{n}$; roughly speaking, the relative perimeter $P(E;A)$ measures how big is the boundary of $E$ inside $A$. An immediate application of the Radon-Nikodym Theorem yields the existence of a Borel vector-valued function $\nu_{E}$, the \emph{outer normal} of $E$, such that $|\nu_{E}| = 1$ $|D\chi_{E}|$-almost everywhere, and
\[
D\chi_{E} = -\nu_{E} |D\chi_{E}|\,.
\]
An important, equivalent definition of $\nu_E$ is the following: we define the \emph{reduced boundary} $\de^{*}E$ as the set of all the points $x\in \R^n$ such that $0<|E\cap B_r(x)| < \om_{n}r^{n}$ for all $r>0$ and the limit
\[
\lim_{r\to 0^{+}}\frac{D\chi_{E}(B_{r}(x))}{|D\chi_{E}|(B_{r}(x))}
\]
exists and has norm $1$ (here, and in the following, we will always denote by $B_r(x)$ the ball centered at $x$ with radius $r$, and set $\om_{n} = |B_{1}(0)|$). One can prove that this limit exists $|D\chi_{E}|$-almost everywhere and coincides with $-\nu_E(x)$ for $|D\chi_{E}|$-almost each point of $\partial^* E$. It is immediate to observe that, whenever $E$ is smooth enough, then the reduced boundary is nothing else than the usual topological boundary, and the outer normal coincides with the exterior normal vector to the boundary. However, for a general Borel set the next classical result by De Giorgi holds~\cite{DeGiorgi2006}.
\begin{teo}[De Giorgi]\label{teo:degiorgi}
Let $E$ be a set of finite perimeter, then 
\begin{itemize}
\item[(i)] $\de^{*}E$ is countably $\Hau^{n-1}$-rectifiable in the sense of Federer~\cite{FedererBOOK};

\item[(ii)] for any $x\in \de^{*}E$, $\chi_{t(E-x)} \to \chi_{H_{\nu_{E}(x)}}$ in $L^{1}_{loc}(\R^{n})$ as $t\to +\infty$, where $H_{\nu}$ denotes the half-space through $0$ whose exterior normal is $\nu$;

\item[(iii)] for any Borel set $A$, $P(E;A) = \Hau^{n-1}(A\cap \de^{*}E)$, thus in particular $P(E)=\H^{n-1}(\partial^* E)$;

\item[(iv)] $\int_{E}\mdiv g = \int_{\de^{*}E} g\cdot \nu_{E}\, d\Hau^{n-1}$ for any $g\in C^{1}_{c}(\R^{n};\R^{n})$.
\end{itemize}
\end{teo}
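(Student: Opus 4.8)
The statement is De Giorgi's structure theorem, and the plan is to derive all four items from a single blow-up analysis at points of the reduced boundary, after first establishing the usual density estimates.

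\emph{Step 1 (density estimates).} I would first show that there are dimensional constants $0<c_n\le C_n$ such that, for every $x\in\de^{*}E$ and every small enough $r>0$,
\[
c_n\,r^n\le |E\cap B_r(x)|\le (\om_n-c_n)\,r^n\,,\qquad c_n\,r^{n-1}\le P(E;B_r(x))\le C_n\,r^{n-1}\,.
\]
The volume bounds follow from a Gronwall-type argument on $m(r):=|E\cap B_r(x)|$: using the (global) isoperimetric inequality applied to $E\cap B_r(x)$, the a.e.-valid identity $P(E\cap B_r(x))=P(E;B_r(x))+\Hau^{n-1}(E^{(1)}\cap\de B_r(x))$ with $E^{(1)}$ the set of Lebesgue density-$1$ points of $E$ (so that $\Hau^{n-1}(E^{(1)}\cap\de B_r(x))=m'(r)$ for a.e.\ $r$), together with $|D\chi_E(B_r(x))|\le m'(r)$ and the defining ratio $|D\chi_E(B_r(x))|/|D\chi_E|(B_r(x))\to1$, one obtains $(m^{1/n})'(r)\ge c>0$ for small $r$, hence $m(r)\ge(cr)^n$, and symmetrically for $E^c$. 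The perimeter lower bound then follows from the relative isoperimetric inequality in balls, and the perimeter upper bound from integrating the consequent estimate $P(E;B_r(x))\le C\,m'(r)$ and using that $r\mapsto|D\chi_E|(B_r(x))$ is monotone.

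\emph{Step 2 (blow-up, item (ii)).} Fix $x\in\de^{*}E$, write $\nu:=\nu_E(x)$ and $E_{x,r}:=(E-x)/r$. By Step~1 the family $\{\chi_{E_{x,r}}\}$ is bounded in $\BVloc$, so along any sequence $r_j\to0^+$ we may extract $\chi_{E_{x,r_j}}\to\chi_F$ in $L^1_{loc}$ with $F$ of finite perimeter, and $|D\chi_{E_{x,r_j}}|\rightharpoonup\mu\ge|D\chi_F|$ weakly-$*$. For a.e.\ $\rho>0$ one has $|D\chi_{E_{x,r_j}}|(B_\rho)\to\mu(B_\rho)$ and $D\chi_{E_{x,r_j}}(B_\rho)\to D\chi_F(B_\rho)$; since the rescaled ratios equal $|D\chi_E(B_{\rho r_j}(x))|/|D\chi_E|(B_{\rho r_j}(x))\to1$ and $D\chi_E(B_{\rho r_j}(x))/|D\chi_E|(B_{\rho r_j}(x))\to-\nu$, passing to the limit gives $|D\chi_F(B_\rho)|=\mu(B_\rho)$ for a.e.\ $\rho$. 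Combined with $\mu\ge|D\chi_F|$ and $|D\chi_F(B_\rho)|\le|D\chi_F|(B_\rho)$, this forces both the absence of mass loss, $\mu=|D\chi_F|$, and that $F$ has $|D\chi_F|$-a.e.\ constant outer normal equal to $\nu$. A set with constant normal $\nu$ satisfies $D\chi_F=-\nu|D\chi_F|$, so $\chi_F$ agrees a.e.\ with a nonincreasing function of $\langle\,\cdot\,,\nu\rangle$; hence $F$ is a half-space with exterior normal $\nu$, and the scale-invariant volume bounds of Step~1 (applied also after further rescaling of $F$) force its boundary to pass through $0$, i.e.\ $F=H_\nu$. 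Since the limit is independent of the subsequence, $\chi_{(E-x)/r}\to\chi_{H_\nu}$ as $r\to0^+$, which is exactly (ii).

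\emph{Step 3 (items (iii), (iv), (i)).} From (ii), $\lim_{r\to0^+}|D\chi_E|(B_r(x))/(\om_{n-1}r^{n-1})=1$ for every $x\in\de^{*}E$, so $|D\chi_E|$ has $(n-1)$-density identically $1$ on $\de^{*}E$; since $|D\chi_E|$ is also concentrated on $\de^{*}E$ (by the Besicovitch derivation theorem the limit defining $\de^{*}E$ exists with norm $1$ at $|D\chi_E|$-a.e.\ point), a standard density comparison between $|D\chi_E|$ and $\Hau^{n-1}$ yields $|D\chi_E|=\Hau^{n-1}\res\de^{*}E$, which is (iii); in particular $\Hau^{n-1}(\de^{*}E)=P(E)<\infty$. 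Item (iv) is then immediate from the very definition of the distributional gradient: $\int_E\mdiv g=-\langle D\chi_E,g\rangle=\int g\cdot\nu_E\,d|D\chi_E|=\int_{\de^{*}E}g\cdot\nu_E\,d\Hau^{n-1}$ for every $g\in C^1_c(\R^n;\R^n)$. Finally, (ii) says precisely that $\Hau^{n-1}\res\de^{*}E$ blows up at every $x\in\de^{*}E$ to $\Hau^{n-1}\res\nu_E(x)^\perp$, i.e.\ $\de^{*}E$ admits an approximate tangent plane at $\Hau^{n-1}$-a.e.\ of its points; together with $\Hau^{n-1}(\de^{*}E)<\infty$, the standard rectifiability criterion gives (i). (Alternatively one may run De Giorgi's original route: the blow-up forces the excess of $E$ at points of $\de^{*}E$ to vanish at small scales, and the Lipschitz approximation theorem then covers $\de^{*}E$, up to $\Hau^{n-1}$-null sets, by countably many Lipschitz graphs.)

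\emph{Main obstacle.} The heart of the argument is Step~2: ensuring that no perimeter is lost in the blow-up and that the limit is forced to be a half-space. The mechanism is the algebraic trick of using the single defining ratio $|D\chi_E(B_r(x))|/|D\chi_E|(B_r(x))\to1$ to simultaneously rule out the mass loss and pin down the normal. The density estimates of Step~1 — especially the two-sided perimeter bound — and the precise form of the rectifiability criterion invoked in Step~3 are the remaining technical points, but both are by now classical.
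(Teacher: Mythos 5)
The paper does not prove this theorem: it is quoted as classical background, with a citation to De Giorgi's collected works, so there is no internal proof to compare your argument against. Judged on its own, your sketch is the standard modern route (density estimates, blow-up with no mass loss via the ratio $|D\chi_E(B_r)|/|D\chi_E|(B_r)\to 1$, classification of sets with constant normal, then the measure-theoretic identification), and the overall architecture is correct; this is essentially the proof one finds in Maggi's or Ambrosio--Fusco--Pallara's monographs.

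One point deserves more care than you give it. In Step~3 you deduce $|D\chi_E|=\Hau^{n-1}\res\de^{*}E$ \emph{before} establishing rectifiability, by ``a standard density comparison.'' The standard comparison theorems are asymmetric: a lower density bound $\Theta^{*(n-1)}(|D\chi_E|,x)\ge 1$ on $\de^{*}E$ gives $|D\chi_E|\ge \Hau^{n-1}\res\de^{*}E$ with the sharp constant, but the upper density bound only gives $|D\chi_E|\res\de^{*}E\le 2^{n-1}\,\Hau^{n-1}\res\de^{*}E$. That suffices to conclude $\Hau^{n-1}(\de^{*}E)<+\infty$ and mutual absolute continuity, but not the exact identity (iii). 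The clean way out is the one you relegate to a parenthesis: first obtain $\Hau^{n-1}(\de^{*}E)<+\infty$ and the existence of approximate tangent planes from (ii), conclude rectifiability (i), and only then use that a rectifiable set has $\Hau^{n-1}$-density one at $\Hau^{n-1}$-a.e.\ point to upgrade the two-sided comparison to the equality in (iii). With the order (ii) $\Rightarrow$ (i) $\Rightarrow$ (iii) $\Rightarrow$ (iv) the argument closes without the constant $2^{n-1}$ ever getting in the way; as written, your ordering leaves a genuine (though entirely classical and repairable) gap at that step.
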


As we noticed above, these notions extend perfectly the classical notions whenever a set is smooth enough. In particular, for a regular set one has $P(E)=\H^{n-1}(\partial E)=\H^{n-1}(\partial^* E)$; for a general Borel set it is always true, by the Theorem above, that $P(E)=\H^{n-1}(\partial^* E)$, but this does not need to coincide with the $\H^{n-1}$ measure of the topological boundary.

There are many advantages of using this generalized notion of perimeter. First of all, this applies to any Borel set; moreover, whenever two sets coincide almost everywhere, then their reduced boundaries --and so, their perimeters-- coincide (while the topological boundaries can be completely different). Another important feature is the validity of the following well-known lower-semicontinuity and compactness properties (see, e.g., \cite{AmbFusPal2000}):
\begin{prop}[Lower-semicontinuity and compactness]\label{prop:semicomp}
Let $\Om\subseteq\R^{n}$ be an open set and let $(E_{j})_{j}$ be a sequence of Borel sets. We have the following well-known properties:
\begin{itemize}
\item[(i)] if $E$ is a Borel set, such that $\chi_{E_{j}}\to\chi_{E}$ in $L^{1}_{loc}(\Om)$, then $P(E;\Om) \leq \liminf\limits_{j} P(E_{j};\Om)$;
\item[(ii)] if there exists a constant $C>0$ such that $P(E_{j};\Om)\leq C$ for all $j$, then there exists a subsequence $E_{j_{k}}$ and a Borel set $E$ such that $\chi_{E_{j_{k}}}\to \chi_{E}$ in $L^{1}_{loc}(\Om)$.
\end{itemize}
\end{prop}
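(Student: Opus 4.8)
The plan is to derive (i) directly from the variational (dual) characterization of the relative perimeter, and to obtain (ii) by localizing to bounded subdomains and invoking the classical $BV$ compactness theorem, followed by a diagonal extraction.

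For part (i), recall that, since $\Om$ is open, the relative perimeter admits the representation
\[
P(E;\Om) = \sup\left\{\int_{E}\mdiv g\, dx\ :\ g\in C^{1}_{c}(\Om;\R^{n}),\ |g|\leq 1\right\},
\]
which is simply the definition of $|D\chi_{E}|(\Om)$ rewritten through integration by parts. Fix any admissible $g$, with compact support $K\compact\Om$; since $\mdiv g$ is bounded and supported in $K$, the convergence $\chi_{E_{j}}\to\chi_{E}$ in $L^{1}_{loc}(\Om)$ yields
\[
\int_{E}\mdiv g\, dx = \lim_{j}\int_{E_{j}}\mdiv g\, dx \leq \liminf_{j}P(E_{j};\Om)\,.
\]
Taking the supremum over all such $g$ gives the claim. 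The only point requiring a little care is that $g$ has compact support in $\Om$, which is exactly what makes local (rather than global) $L^{1}$-convergence sufficient to pass to the limit.

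For part (ii), fix an increasing sequence of open sets $\Om_{k}\compact\Om$ with $\bigcup_{k}\Om_{k}=\Om$. On each $\Om_{k}$ the functions $\chi_{E_{j}}$ satisfy $\|\chi_{E_{j}}\|_{L^{1}(\Om_{k})}\leq|\Om_{k}|$ and $|D\chi_{E_{j}}|(\Om_{k})\leq P(E_{j};\Om)\leq C$, so the sequence is bounded in $BV(\Om_{k})$. By the compactness theorem for $BV$ functions on bounded domains (see \cite{AmbFusPal2000}), a subsequence converges in $L^{1}(\Om_{k})$, and up to a further subsequence also pointwise almost everywhere, to some function in $BV(\Om_{k})$. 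Performing this extraction successively on $\Om_{1}\subseteq\Om_{2}\subseteq\cdots$ and taking a diagonal subsequence $E_{j_{m}}$, we obtain $\chi_{E_{j_{m}}}\to u$ both in $L^{1}_{loc}(\Om)$ and almost everywhere in $\Om$, for a single measurable function $u$. Since each $\chi_{E_{j_{m}}}$ takes only the values $0$ and $1$, so does the almost-everywhere limit $u$; hence $u=\chi_{E}$ with $E:=\{u=1\}$ a Borel set, which proves the statement (and, combined with (i), also shows that $E$ has finite perimeter in every $\Om_{k}$).

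The single non-elementary ingredient is the $BV$ compactness theorem on bounded domains, which is classical and which I would simply quote rather than reprove; it itself reduces to the Rellich--Kondrachov embedding together with a mollification argument. Consequently the only real ``obstacle'' here is expository: organizing the localization over a countable exhaustion of $\Om$ and performing the diagonal extraction so that both the $L^{1}_{loc}$ convergence and the almost-everywhere convergence of the final subsequence are preserved.
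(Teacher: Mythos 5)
Your argument is correct and is precisely the classical one: the paper gives no proof of this proposition, stating it as well known and pointing to \cite{AmbFusPal2000}, and your part~(i) via the dual characterization of $|D\chi_{E}|(\Om)$ with test fields compactly supported in $\Om$, together with part~(ii) via localization, $BV$ compactness and a diagonal extraction, is exactly the standard textbook proof. The only (very minor) point worth making explicit is that the exhausting sets $\Om_{k}\compact\Om$ should be chosen to be bounded extension domains (e.g.\ finite unions of balls or smooth open sets), since the $BV$ compactness theorem in $L^{1}(\Om_{k})$ requires some boundary regularity; with that choice everything goes through as you wrote it.
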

Other useful properties of the perimeter (invariance by isometries and scaling property, isoperimetric inequality, lattice property) are collected in the next proposition. 
\begin{prop}
Given two Borel sets $E,F\subseteq\R^{n}$ of finite perimeter, $\lambda>0$ and an isometry $T:\R^{n}\to \R^{n}$, we have
\begin{gather}
\label{Pomogeneo}
P(\lambda T(E)) = \lambda^{n-1}P(E)\,,\\[3pt] 
\label{isopRn}
P(E) \geq n\om_{n}^{1/n}|E|^{\frac{n-1}{n}}\,,\\[3pt]
\label{reticolo}
P(E\cup F) + P(E\cap F) \leq P(E) + P(F)\,.
\end{gather}
\end{prop}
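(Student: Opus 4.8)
The three claims have rather different characters, so I would prove them separately.

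\textbf{Scaling and isometry invariance \eqref{Pomogeneo}.} I would argue directly from the distributional definition $P(E)=\sup\{\int_E\mdiv g:\ g\in C^1_c(\R^n;\R^n),\ |g|\le1\}$ by changes of variables. It is enough to treat an isometry $T(x)=Rx+b$ (with $R\in O(n)$) and a pure dilation separately, and then combine, since $\lambda T(E)=S(\lambda E)$ for the isometry $S(z)=Rz+\lambda b$. For $T$: given an admissible $g$, set $h(y):=R^{-1}g(Ry+b)$; the chain rule together with $RR^{T}=I$ gives $(\mdiv h)(y)=(\mdiv g)(Ry+b)$, while $|h|=|g|\le1$ because $R^{-1}$ is orthogonal, so the substitution $x=Ry+b$ (Jacobian of modulus $1$) yields $\int_{T(E)}\mdiv g=\int_E\mdiv h\le P(E)$; taking the supremum over $g$ and then repeating the argument with $T^{-1}$ gives $P(T(E))=P(E)$. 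For the dilation $E\mapsto\lambda E$: given $g$, set $h(y):=g(\lambda y)$, so that $(\mdiv h)(y)=\lambda(\mdiv g)(\lambda y)$, and the substitution $x=\lambda y$ (Jacobian $\lambda^{n}$) gives $\int_{\lambda E}\mdiv g=\lambda^{n-1}\int_E\mdiv h\le\lambda^{n-1}P(E)$; taking the supremum and then using the replacement $\lambda\mapsto1/\lambda$ gives \eqref{Pomogeneo}.

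\textbf{Submodularity \eqref{reticolo}.} I would use mollification and the a.e.\ chain rule for the lattice operations. Let $u_\e:=\chi_E*\rho_\e$ and $v_\e:=\chi_F*\rho_\e$ be standard mollifications: they are smooth, take values in $[0,1]$, converge to $\chi_E$ and $\chi_F$ in $L^1_{loc}$, and satisfy $\int|\nabla u_\e|\to P(E)$ and $\int|\nabla v_\e|\to P(F)$ (the bound $\int|\nabla u_\e|\le|D\chi_E|(\R^n)$ holds because convolution does not increase the total variation, while the opposite asymptotic inequality is the lower-semicontinuity of Proposition~\ref{prop:semicomp}(i)). Then $u_\e\vee v_\e\to\chi_{E\cup F}$ and $u_\e\wedge v_\e\to\chi_{E\cap F}$ in $L^1_{loc}$. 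Since $u_\e,v_\e$ are Lipschitz, almost everywhere $\nabla(u_\e\vee v_\e)$ equals $\nabla u_\e$ on $\{u_\e>v_\e\}$, equals $\nabla v_\e$ on $\{u_\e<v_\e\}$, and $\nabla u_\e=\nabla v_\e$ a.e.\ on $\{u_\e=v_\e\}$ (and symmetrically for $\wedge$); consequently, at a.e.\ point the unordered pair $\{|\nabla(u_\e\vee v_\e)|,|\nabla(u_\e\wedge v_\e)|\}$ coincides with $\{|\nabla u_\e|,|\nabla v_\e|\}$, so that $|\nabla(u_\e\vee v_\e)|+|\nabla(u_\e\wedge v_\e)|=|\nabla u_\e|+|\nabla v_\e|$. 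Integrating this identity, applying the lower-semicontinuity of the total variation on the left-hand side and passing to the limit on the right, yields \eqref{reticolo}.

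\textbf{Isoperimetric inequality \eqref{isopRn}.} This is the one genuinely deep statement, and in a preliminary section one normally just quotes it (e.g.\ from \cite{AmbFusPal2000}). Were I to prove it, I would first reduce, by the same mollification/approximation scheme as above, to the case of a bounded open set $E$ with smooth boundary (so that both $|E|$ and $P(E)$ are well approximated), and then derive the sharp inequality for such $E$ from the Brunn--Minkowski inequality $|A+B|^{1/n}\ge|A|^{1/n}+|B|^{1/n}$: applying it with $B=B_\e(0)$ gives $|E+B_\e(0)|\ge(|E|^{1/n}+\e\,\om_n^{1/n})^{n}$, hence $|E+B_\e(0)|-|E|\ge n\,\om_n^{1/n}\,|E|^{(n-1)/n}\,\e+o(\e)$; dividing by $\e$ and letting $\e\to0^{+}$ yields \eqref{isopRn}, since for a bounded smooth set the Minkowski content $\lim_{\e\to0^{+}}(|E+B_\e(0)|-|E|)/\e$ equals $P(E)$. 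The main obstacle is precisely this step: the sharp isoperimetric inequality --- equivalently Brunn--Minkowski, or De~Giorgi's Steiner-symmetrization proof --- is not elementary, and the sharpness of the constant $n\om_n^{1/n}$ is exactly what makes the statement non-trivial; \eqref{Pomogeneo} and \eqref{reticolo}, by contrast, are essentially bookkeeping with the definition of perimeter.
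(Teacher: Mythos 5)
Your arguments are correct, but there is nothing in the paper to compare them against: the authors state this proposition as a collection of well-known facts (invariance and scaling, the sharp isoperimetric inequality, submodularity of the perimeter) and give no proof, implicitly deferring to standard references such as \cite{AmbFusPal2000}. What you write is the standard textbook treatment and it all checks out: the change-of-variables computation in the distributional definition for \eqref{Pomogeneo} (including the two-sided argument via $T^{-1}$ and $\lambda\mapsto 1/\lambda$), the mollification proof of \eqref{reticolo} via the pointwise a.e.\ identity $|\nabla(u\vee v)|+|\nabla(u\wedge v)|=|\nabla u|+|\nabla v|$ for Lipschitz functions, and the correct assessment that \eqref{isopRn} is the only genuinely deep item, with a sound sketch of its derivation from Brunn--Minkowski for smooth sets plus approximation. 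One small imprecision: in the submodularity argument you invoke Proposition~\ref{prop:semicomp}(i) for the convergence $\int|\nabla u_\e|\to P(E)$ and for the lower bound on the limit of $\int|\nabla(u_\e\vee v_\e)|$, but that proposition is stated for characteristic functions of sets, whereas $u_\e$, $v_\e$ and $u_\e\vee v_\e$ are not characteristic functions; what you actually need is the (equally standard) lower semicontinuity of the total variation of BV functions under $L^1_{loc}$ convergence. This does not affect the validity of the argument, but the citation should be adjusted.
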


While generic sets of finite perimeter can be also very weird, a regularity theory is available in particular for \emph{minimizers} of the perimeter subject to a volume constraint (see~\cite{Tamanini1982}). 
\begin{teo}[Regularity of perimeter minimizers with volume constraint]\label{teo:regolarita}
Let $\Om$ be a fixed open domain, and assume that $E$ is a Borel set satisfying the following property: $P(E;\Om)<+\infty$ and for all Borel $F$ such that $E\difsim F \compact\Om$ and $|F\cap \Om| = |E\cap \Om|$, it holds
\[
P(F;\Om) \leq P(E;\Om)\,.
\]
Then, $\de^{*}E \cap \Om$ is an analytic surface with constant mean curvature, and the singular set $(\de E \setminus \de^{*}E) \cap \Om$ is a closed set with Hausdorff dimension at most $n-8$. 
\end{teo}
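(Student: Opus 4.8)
The plan is to reduce the volume-constrained minimality of $E$ to an unconstrained \emph{almost-minimality} property, and then to run the classical regularity theory for almost minimizers of the perimeter. The first step is to prove that there are constants $\Lambda>0$ and $\rho_0>0$ such that $E$ is a $(\Lambda,\rho_0)$-perimeter minimizer in $\Om$, i.e.\ $P(E;B_\rho(x))\le P(G;B_\rho(x))+\Lambda\,|E\difsim G|$ for every ball $B_\rho(x)\compact\Om$ with $\rho<\rho_0$ and every Borel set $G$ with $E\difsim G\subseteq B_\rho(x)$, now \emph{without} any volume constraint. The mechanism is a volume-fixing variation. If $\de^*E\cap\Om=\emptyset$ the statement is vacuous, so we may assume $P(E;\Om)>0$; then $\de^*E\cap\Om$ has positive $\H^{n-1}$-measure, and at any of its points the blow-up of $E$ is a half-space by Theorem~\ref{teo:degiorgi}, so, fixing such a point together with a small ball $B\compact\Om$ around it, one can modify $E$ --- only inside $B$ --- so as to change its volume by any prescribed small amount $\delta$ at a perimeter cost at most $C|\delta|$. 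Given $G$ with $E\difsim G\subseteq B_\rho(x)$ and $\rho$ small enough that $B_\rho(x)\cap B=\emptyset$ (which can be arranged for all small $\rho$ by keeping two candidate points of $\de^*E\cap\Om$ in reserve), set $\delta=|E\cap\Om|-|G\cap\Om|$; since $|\delta|\le|E\difsim G|$, applying the same modification to $G$ produces a competitor $\widetilde G$ with $E\difsim\widetilde G\compact\Om$, $|\widetilde G\cap\Om|=|E\cap\Om|$, and $P(\widetilde G;\Om)\le P(G;\Om)+C\,|E\difsim G|$. The minimality hypothesis applied to $\widetilde G$ then yields the asserted inequality with $\Lambda=C$.

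From $(\Lambda,\rho_0)$-minimality I would extract the uniform density estimates that start the regularity machine: after passing to the precise representative of $E$ (the one with $\spt|D\chi_E|=\de E$), there are $c,\rho_1>0$ such that $c\,\om_n\rho^n\le|E\cap B_\rho(x)|\le(1-c)\,\om_n\rho^n$ and $c\,\rho^{n-1}\le P(E;B_\rho(x))\le C\,\rho^{n-1}$ for all $x\in\de E\cap\Om$ and all $\rho<\rho_1$ with $B_\rho(x)\compact\Om$. The upper perimeter bound follows by comparing $E$ with $E\setminus B_\rho(x)$ and using the isoperimetric inequality~\eqref{isopRn}, while the lower bounds come from the relative isoperimetric inequality together with the minimality inequality. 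One also obtains an almost-monotonicity formula for the rescaled perimeter $\rho\mapsto e^{\Lambda\rho}\rho^{1-n}P(E;B_\rho(x))$, which guarantees that every blow-up of $E$ at a boundary point subconverges in $L^1_{loc}$ to a locally area-\emph{minimizing} cone (the $\Lambda$-term scales away in the limit).

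The technical core --- and the step I expect to be the main obstacle --- is the $\varepsilon$-regularity theorem: there is $\varepsilon_0>0$ such that, if the spherical excess $\excess(x,\rho):=\min_{\nu\in\Sf}\bigl(\rho^{1-n}\int_{\de^*E\cap B_\rho(x)}\tfrac12\,|\nu_E-\nu|^2\,d\H^{n-1}\bigr)$ is smaller than $\varepsilon_0$ on some admissible ball, then $\excess(x,t)$ decays like a positive power of $t$ as $t\to0$ --- the almost-minimality contributing only an extra error of order $\Lambda\rho$, which is absorbed into the iteration. Establishing this requires the full De Giorgi toolkit: a Caccioppoli-type (reverse Poincar\'e) inequality for the excess, the Lipschitz approximation of $\de^*E$ at scales of small excess, comparison with a harmonic graph together with a tilt-excess estimate, and the iteration of these into geometric decay. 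By Campanato's criterion the decay forces $\de^*E$ to coincide, near each such $x$, with a $C^{1,\alpha}$ hypersurface for some $\alpha>0$. Writing $\Sigma:=(\de E\setminus\de^*E)\cap\Om$ for the set of points where the excess never drops below $\varepsilon_0$, we conclude that $\de^*E\cap\Om$ is a $C^{1,\alpha}$ hypersurface and that $\Sigma$ is relatively closed in $\Om$.

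It remains to upgrade the regularity, identify the mean curvature, and bound $\dim_\H\Sigma$. Computing the first variation of $P(\,\cdot\,;\Om)$ at $E$ along volume-preserving vector fields and then removing the constraint by the standard Lagrange-multiplier argument produces a single constant $\la\in\R$ with $\int_{\de^*E}\mdiv_{\de^*E}X\,d\H^{n-1}=\la\int_{\de^*E}X\cdot\nu_E\,d\H^{n-1}$ for every $X\in C^1_c(\Om;\R^n)$; hence $\de^*E\cap\Om$ has constant distributional mean curvature $\la$. Expressing it locally as a graph over its tangent hyperplane turns this into a uniformly elliptic equation with analytic (indeed constant) coefficients, so Schauder estimates bootstrap the graph to $C^\infty$ and Morrey's theorem on the analyticity of solutions of analytic elliptic equations upgrades it to real-analytic, which gives the first assertion. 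For the second, Federer's dimension-reduction argument applies: iterated blow-ups at points of $\Sigma$ are area-minimizing hypercones, which by Simons' theorem reduce to half-spaces whenever the ambient dimension is at most $7$; therefore $\Sigma=\emptyset$ when $n\le7$, and $\dim_\H\Sigma\le n-8$ in general.
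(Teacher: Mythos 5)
The paper does not prove Theorem~\ref{teo:regolarita} at all: it is quoted as a known result with a reference to~\cite{Tamanini1982}, so there is no internal argument to compare yours against. Measured instead against the standard literature, your outline is the correct and canonical route: the reduction of volume-constrained minimality to $(\Lambda,\rho_0)$-almost-minimality by a volume-fixing variation localized away from the competitor (with the precaution of keeping two reserve points of $\de^{*}E\cap\Om$, which is exactly the right fix for the case where the competitor ball meets the first one), then density estimates, almost-monotonicity, De Giorgi's $\varepsilon$-regularity and excess decay, the Lagrange-multiplier identification of the constant mean curvature, elliptic bootstrap to analyticity, and Federer dimension reduction combined with Simons' theorem for the bound $n-8$ on the singular set. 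This is precisely the architecture of the proof in, e.g., \cite{MaggiBOOK} and in Tamanini's notes. The one honest caveat is that your ``technical core'' --- the $\varepsilon$-regularity theorem with the reverse Poincar\'e inequality, Lipschitz approximation, harmonic comparison and tilt-excess iteration --- is described rather than carried out; as a self-contained proof the proposal is therefore a roadmap, not a complete argument, but as a justification of a classical cited theorem it identifies every ingredient correctly and contains no false steps.
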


\subsection{Setting of the Cheeger problem and first main results\label{sectdef}}

In this section we introduce the Cheeger problem and we list some well-known results. The definition of the problem is the following: for any bounded, open set $\Omega$, we define the \emph{Cheeger constant of $\Om$} as
\begin{equation}\label{Cheegerconst}
h(\Om) := \inf \left\{\frac{P(F)}{|F|}\,:\ F\subseteq\Om,\ |F|>0\right\}\,.
\end{equation}
Any set $F\subseteq \Omega$ which realizes the above infimum is called a \emph{Cheeger set in $\Omega$}, and a set is simply called a \emph{Cheeger set} when it realizes the above infimum itself. Observe that of course if $F$ is a Cheeger set in $\Omega$, then it is also a Cheeger set. The following are some of the basic properties which are known about the problem, the proof of most of them can be found for instance in~\cite{StrZie1997, KawFri2003,KawLac2006} as we discuss below.

\begin{prop}\label{prop:CheegerGenProp}
Let $\Om,\widetilde \Om\subseteq\R^{n}$ be bounded, open sets. Then the following properties hold.
\begin{itemize}
\item[(i)] If $\Om\subseteq\widetilde\Om$ then $h(\Om) \geq h(\widetilde\Om)$.

\item[(ii)] For any $\lambda>0$ and any isometry $T:\R^{n}\to\R^{n}$, one has $h(\lambda T(\Om)) = \frac{1}{\lambda} h(\Om)$.

\item[(iii)] There exists a (possibly non-unique) Cheeger set $E\subseteq\Om$.

\item[(iv)] If $E$ is Cheeger in $\Om$, then it minimizes the relative perimeter among subsets of $\Omega$ with the same volume as $E$; consequently, $\de^* E\cap \Om$ has the regularity stated in Theorem~\ref{teo:regolarita}, and in particular $\de^{*}E\cap \Om$ is a hypersurface of constant mean curvature equal to $\frac{h(\Om)}{n-1}$.

\item[(v)] If $E$ is Cheeger in $\Om$ then $|E| \geq \om_{n}\left(\frac{n}{h(\Om)}\right)^{n}$.

\item[(vi)] If $E$ and $F$ are Cheeger in $\Om$, then $E\cup F$ and $E\cap F$ (if it is not empty) are also Cheeger in $\Om$.

\item[(vii)] If $E$ is Cheeger in $\Om$ and $\Om$ has finite perimeter, then $\de E\cap \Om$ can meet $\de^{*}\Om$ only in a tangential way, that is, for any $x\in \de^{*}\Om\cap \de E$ one has that $x\in \de^{*}E$ and $\nu_{E}(x) = \nu_{\Om}(x)$.

\end{itemize}
\end{prop}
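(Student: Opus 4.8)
The plan is to prove the seven items in order of increasing difficulty, leaving the tangential contact property (vii) for last since it is the only genuinely delicate point. Items (i), (ii), (v), (vi) are immediate. For (i), any $F\subseteq\Om\subseteq\widetilde\Om$ with $|F|>0$ is an admissible competitor in the definition of $h(\widetilde\Om)$ as well, so the latter infimum is taken over a larger family and cannot exceed $h(\Om)$. For (ii), the map $F\mapsto\lambda T(F)$ is a bijection between the admissible competitors for $\Om$ and those for $\lambda T(\Om)$ preserving positivity of the volume; combining the scaling law \eqref{Pomogeneo} with $|\lambda T(F)|=\lambda^{n}|F|$ gives $P(\lambda T(F))/|\lambda T(F)|=\lambda^{-1}P(F)/|F|$, and passing to the infimum yields the claim. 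For (v), I would insert the isoperimetric inequality \eqref{isopRn} into the Cheeger identity $P(E)=h(\Om)\,|E|$ and divide by $|E|^{(n-1)/n}$. For (vi), writing $h=h(\Om)$ and using that $E\cup F$ and $E\cap F$ are admissible competitors (the latter provided $|E\cap F|>0$), the submodularity \eqref{reticolo} together with $|E\cup F|+|E\cap F|=|E|+|F|$ produces the chain $h(|E\cup F|+|E\cap F|)\le P(E\cup F)+P(E\cap F)\le P(E)+P(F)=h(|E|+|F|)$, whose two ends coincide; hence every inequality is an equality and both $E\cup F$ and $E\cap F$ attain the ratio $h$.

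For the existence (iii) I would run the direct method. Let $(F_{j})_{j}$ be a minimizing sequence. Since $F_{j}\subseteq\Om$ and $\Om$ is bounded, $|F_{j}|\le|\Om|$, and since $P(F_{j})/|F_{j}|\to h(\Om)<+\infty$ the perimeters $P(F_{j})$ are bounded; the compactness Proposition~\ref{prop:semicomp}(ii) then yields $\chi_{F_{j_{k}}}\to\chi_{E}$ in $L^{1}_{loc}(\Om)$ for some Borel $E$, and boundedness of $\Om$ upgrades this (by dominated convergence along a further subsequence) to convergence of the volumes. The one point to watch is that the volume does not vanish: the isoperimetric inequality \eqref{isopRn} gives $P(F_{j})/|F_{j}|\ge n\om_{n}^{1/n}|F_{j}|^{-1/n}$, so $|F_{j}|\ge\om_{n}(n/h(\Om))^{n}$ is bounded below, whence $|E|>0$ and $E\subseteq\Om$. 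Lower semicontinuity (Proposition~\ref{prop:semicomp}(i)) finally gives $P(E)\le\liminf_{k}P(F_{j_{k}})=h(\Om)\,|E|$, so $P(E)/|E|\le h(\Om)$, while the opposite inequality is the definition of $h(\Om)$: thus $E$ is Cheeger.

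Item (iv) I would split into minimality, regularity, and the value of the curvature. If $E$ is Cheeger and $F\subseteq\Om$ has $|F|=|E|$, then $P(F)\ge h(\Om)\,|F|=P(E)$, so $E$ minimizes the perimeter among subsets of $\Om$ of its own volume; when moreover $E\difsim F\compact\Om$ the two sets agree outside a compact subset of $\Om$, so $P(F)-P(E)=P(F;\Om)-P(E;\Om)$ and the inequality becomes $P(E;\Om)\le P(F;\Om)$, which is exactly the hypothesis of Theorem~\ref{teo:regolarita}; this yields the analyticity and the constancy of the mean curvature of $\de^{*}E\cap\Om$. To pin down its value I would take a smooth normal variation $E_{t}$ with velocity $\phi$ compactly supported on $\de^{*}E\cap\Om$ and satisfying $\int_{\de^{*}E}\phi\,d\Hau^{n-1}\ne0$, which keeps $E_{t}\subseteq\Om$ for small $t$; the first variation formulas give $\tfrac{d}{dt}P(E_{t})\big|_{0}=H_{0}\int\phi$ and $\tfrac{d}{dt}|E_{t}|\big|_{0}=\int\phi$, where $H_{0}$ is the constant sum of the principal curvatures. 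Minimality of $t\mapsto P(E_{t})/|E_{t}|$ at the interior point $t=0$ forces $H_{0}=P(E)/|E|=h(\Om)$, i.e. the normalized mean curvature equals $h(\Om)/(n-1)$.

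Finally, (vii) is the real obstacle. Fix $x\in\de^{*}\Om\cap\de E$. By De Giorgi's blow-up (Theorem~\ref{teo:degiorgi}(ii)) the rescalings $(\Om-x)/t$ converge to the half-space $H_{\nu_{\Om}(x)}$, and since $E\subseteq\Om$ any blow-up limit $E_{\infty}$ of $(E-x)/t$ — extracted along a subsequence using the perimeter bounds coming from the fact that the volume-constrained minimizer $E$ is a perimeter almost-minimizer (a $(\Lambda,r_{0})$-minimizer) — satisfies $E_{\infty}\subseteq H_{\nu_{\Om}(x)}$. The heart of the matter is to show that $E_{\infty}$ is \emph{exactly} that half-space: under blow-up the curvature rescales away, so $E_{\infty}$ minimizes the perimeter in $\R^{n}$ under the single constraint $E_{\infty}\subseteq H_{\nu_{\Om}(x)}$, and a comparison argument (replacing $E_{\infty}$ by the full half-space cannot increase the perimeter) forces $E_{\infty}=H_{\nu_{\Om}(x)}$, which identifies $x$ as a point of $\de^{*}E$. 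The matching of the normals is then the easy half: whenever $E\subseteq\Om$ and $x\in\de^{*}E\cap\de^{*}\Om$, the blow-up half-spaces obey $H_{\nu_{E}(x)}\subseteq H_{\nu_{\Om}(x)}$, and two half-spaces through the origin with one contained in the other must coincide, so $\nu_{E}(x)=\nu_{\Om}(x)$. I expect the delicate step to be precisely the exclusion of degenerate blow-ups (density $0$ or $1$, or a singular point of $E$) at $x$, which should follow from the density and monotonicity estimates available for $(\Lambda,r_{0})$-minimizers together with the half-space containment forced by $E\subseteq\Om$.
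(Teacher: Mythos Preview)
Your treatment of (i)--(vi) is correct and essentially identical to the paper's: the paper dispatches these in a few lines using exactly the ingredients you name (definition, \eqref{Pomogeneo}, Proposition~\ref{prop:semicomp}, Theorem~\ref{teo:regolarita}, \eqref{isopRn}, \eqref{reticolo}), and your direct-method argument for (iii) with the isoperimetric lower bound on $|F_j|$ is the standard one.

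For (vii) your outline --- blow up $E$ and $\Om$ at $x$, identify the limit $E_\infty$ as the half-space $H=H_{\nu_\Om(x)}$, then read off $x\in\de^*E$ and $\nu_E(x)=\nu_\Om(x)$ --- is precisely the paper's strategy. The paper, however, does not package the density estimates as ``$(\Lambda,r_0)$-minimality'' but derives them by hand: it builds volume-adjusted competitors (cutting out $B_r(x)$ and fixing the volume by a diffeomorphism far away) to get the differential inequality $P(E;B_r(x))\le m'(r)+c_1 m(r)$, integrates to obtain the lower volume density, and then uses the competitor $(E\setminus B_{r}(x))\cup(\Om\cap B_{r}(x))$ for the upper perimeter density. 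Your black-box approach would work, but note that the relevant competitors must stay inside $\Om$, which is exactly what these two constructions guarantee.

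There is one genuine gap. Your sentence ``a comparison argument (replacing $E_\infty$ by the full half-space cannot increase the perimeter) forces $E_\infty=H$'' is not a proof: minimality of $E_\infty$ is only with respect to \emph{compactly supported} variations, and swapping $E_\infty$ for $H$ inside a ball introduces an uncontrolled surface term on the sphere. The paper closes this step in two moves: first, since $H$ is convex, projection onto $H$ shows that $E_\infty$ is in fact perimeter-minimizing against arbitrary compact variations in $\R^n$ (not just those contained in $H$); second, it invokes Simon's strict maximum principle for area-minimizing hypersurfaces \cite{Simon1987} to conclude from $E_\infty\subseteq H$ and $0\in\de E_\infty\cap\de H$ that $E_\infty=H$. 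You should replace the comparison heuristic by this maximum-principle argument. A smaller point: the paper does not stop at ``the blow-up is $H$'' but explicitly verifies $\lim_{r\to0}P(E;B_r(x))/r^{n-1}=\om_{n-1}$ and $\lim_{r\to0}D\chi_E(B_r(x))\cdot(-\nu_\Om(x))/(\om_{n-1}r^{n-1})=1$, which is what the definition of $\de^*E$ actually requires; your passage from ``unique half-space blow-up'' to ``$x\in\de^*E$'' skips this verification.
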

The claims~(i)--(vi) are simple and widely known, and can be found in the previously cited references. More precisely, (i) and (ii) are immediate consequences of the definition of Cheeger constant and of~\eqref{Pomogeneo} coupled with $|\lambda \Om| = \lambda^{n}|\Om|$. The proofs of (iii) and (iv) are accomplished by, respectively, Proposition~\ref{prop:semicomp} and Theorem~\ref{teo:regolarita}. The proof of~(v) follows from the isoperimetric inequality~\eqref{isopRn} and the fact that $P(E) = h(\Om)|E|$. Finally, to prove (vi) it is enough to apply~\eqref{reticolo} and get
\[\begin{split}
h(\Om) (|E\cup F| + |E\cap F|) &= h(\Om)(|E|+|F|)
= P(E)+P(F)\\
&\geq P(E\cup F) + P(E\cap F)
\geq h(\Om)(|E\cup F| + |E\cap F|)\,,
\end{split}\]
hence all previous inequalities are actually equalities and this readily proves (vi). Instead, concerning the property~(vii), it is widely known if the boundary of $\Omega$ is smooth in a neighborhood of $x$ (see for instance~\cite{StrZie1997}) but, as far as we know, it has not yet been written in the fully general form as above. For the sake of completeness, we add in the Appendix a proof of it.

\begin{oss}\rm 
We notice that, by Proposition~\ref{prop:CheegerGenProp} (iii), (v) and (vi), we can always find minimal Cheeger sets in $\Om$ (possibly not unique) and a unique maximal Cheeger set (this last can be obtained as the union of all minimal Cheeger sets of $\Om$). An example of a domain with two disjoint minimal Cheeger sets is shown in Figure~\ref{fig:duecheeger}.
\end{oss}

We consider now the problem of continuity of the Cheeger constant $h(\Om)$ with respect to some suitable notions of convergence of domains. In Theorem~\ref{teo:hcontinua} below, we show that the Cheeger constant is lower semicontinuous with respect to $L^{1}$-convergence of domains, while it is continuous if we additionally assume that the perimeters of the approximating domains converge to the perimeter of the limit domain. We point out that the continuity of the Cheeger constant under $L^{1}$-convergence of \emph{convex} domains has been proved in~\cite{Parini_doktorarbeit2009} (since $L^{1}$-convergence plus convexity implies convergence of the perimeters, the continuity result for convex domains is a particular case of Theorem~\ref{teo:hcontinua}).
\begin{teo}[Continuity of the Cheeger constant]\label{teo:hcontinua}
Let $\Om,\Om_{j}\subseteq\R^{n}$ be nonempty open bounded sets for all $j\in \N$. If $\chi_{\Om_{j}} \to \chi_{\Om}$ in $L^{1}$, then 
\begin{equation}\label{hlsc}
\liminf_{j\to\infty} h(\Om_{j}) \geq h(\Om)\,.
\end{equation}
If in addition $\Om, \Om_{j}$ are sets of finite perimeter and $P(\Om_{j})\to P(\Om)$ as $j\to\infty$, then
\begin{equation}\label{hc}
\lim_{j\to\infty} h(\Om_{j}) = h(\Om)\,.
\end{equation}
\end{teo}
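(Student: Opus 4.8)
The plan is to prove the lower-semicontinuity bound \eqref{hlsc} first, and then upgrade it to the full continuity statement \eqref{hc} by constructing, under the extra hypothesis $P(\Om_j)\to P(\Om)$, a sequence of competitors inside $\Om_j$ whose Cheeger ratios converge to $h(\Om)$ from above.

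\emph{Lower semicontinuity.} Set $L := \liminf_j h(\Om_j)$; passing to a subsequence (not relabelled) we may assume $h(\Om_j)\to L$, and we may also assume $L<+\infty$, otherwise there is nothing to prove. For each $j$, by Proposition~\ref{prop:CheegerGenProp}(iii) there is a Cheeger set $E_j\subseteq\Om_j$, so that $P(E_j)=h(\Om_j)|E_j|$ and, by Proposition~\ref{prop:CheegerGenProp}(v), $|E_j|\geq \om_n\big(n/h(\Om_j)\big)^n$, which is bounded below by a positive constant for $j$ large since $h(\Om_j)\to L<+\infty$. On the other hand $|E_j|\leq|\Om_j|$, which is bounded because $\chi_{\Om_j}\to\chi_\Om$ in $L^1$; hence $P(E_j)=h(\Om_j)|E_j|$ is bounded, and $|E_j|$ is bounded away from $0$ and $+\infty$. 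By the compactness Proposition~\ref{prop:semicomp}(ii) (applied on a fixed large ball containing all $\Om_j$ for $j$ large, up to the $L^1$-convergence of the domains) we extract a further subsequence with $\chi_{E_j}\to\chi_E$ in $L^1_{loc}$ for some Borel set $E$. Since $\chi_{\Om_j}\to\chi_\Om$ in $L^1$, we get $|E\setminus\Om|\leq\lim_j|E_j\setminus\Om_j|=0$, so $E\subseteq\Om$ up to null sets; and $|E|=\lim_j|E_j|>0$. Finally, lower semicontinuity of the perimeter, Proposition~\ref{prop:semicomp}(i), gives $P(E)\leq\liminf_j P(E_j)=\lim_j h(\Om_j)|E_j| = L\,|E|$, so that $h(\Om)\leq P(E)/|E|\leq L$, which is \eqref{hlsc}.

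\emph{Continuity.} Given \eqref{hlsc}, it suffices to show $\limsup_j h(\Om_j)\leq h(\Om)$ under the additional assumption $P(\Om_j)\to P(\Om)$. Fix $\eps>0$ and pick a Borel set $F\subseteq\Om$ with $|F|>0$ and $P(F)/|F|\leq h(\Om)+\eps$. The natural competitor in $\Om_j$ is $F_j := F\cap\Om_j$. We have $|F_j|\to|F|>0$ by $L^1$-convergence of the domains, so in particular $|F_j|>0$ for $j$ large. The crux is to control $P(F_j)$ from above: since $F_j = F\cap\Om_j$, the lattice property \eqref{reticolo} gives
\[
P(F\cap\Om_j) \leq P(F) + P(\Om_j) - P(F\cup\Om_j)\,.
\]
Now $\chi_{F\cup\Om_j}\to\chi_{F\cup\Om}=\chi_\Om$ in $L^1$ (because $F\subseteq\Om$), and $P(\Om_j)\to P(\Om)$; combining lower semicontinuity of the perimeter, $P(\Om)\leq\liminf_j P(F\cup\Om_j)$, with the convergence $P(\Om_j)\to P(\Om)$, we obtain $\limsup_j\big(P(\Om_j)-P(F\cup\Om_j)\big)\leq 0$. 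Hence $\limsup_j P(F_j)\leq P(F)$, and therefore
\[
\limsup_{j\to\infty} h(\Om_j) \leq \limsup_{j\to\infty}\frac{P(F_j)}{|F_j|} \leq \frac{P(F)}{|F|} \leq h(\Om)+\eps\,.
\]
Letting $\eps\to0$ yields $\limsup_j h(\Om_j)\leq h(\Om)$, which together with \eqref{hlsc} proves \eqref{hc}.

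\emph{Main obstacle.} The delicate point is the perimeter bound in the second part: one cannot simply say $P(F\cap\Om_j)\leq P(F)$, since intersecting with $\Om_j$ may create extra boundary along $\partial\Om_j$. The hypothesis $P(\Om_j)\to P(\Om)$ is exactly what kills this extra term, via the submodularity inequality \eqref{reticolo} and the lower semicontinuity applied to $F\cup\Om_j$; without it \eqref{hc} genuinely fails. Everything else is a routine combination of compactness, lower semicontinuity, and the volume lower bound \eqref{isopRn}/Proposition~\ref{prop:CheegerGenProp}(v), the latter being what prevents the minimizing sets $E_j$ from vanishing in the limit.
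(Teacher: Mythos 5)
Your proposal is correct and follows essentially the same route as the paper: Cheeger sets $E_j\subseteq\Om_j$ plus compactness and lower semicontinuity of the perimeter for \eqref{hlsc}, and the competitor $F\cap\Om_j$ combined with the submodularity inequality \eqref{reticolo}, the convergence $P(\Om_j)\to P(\Om)$, and lower semicontinuity applied to $F\cup\Om_j$ for \eqref{hc} (the paper uses an exact Cheeger set of $\Om$ where you use an $\eps$-almost minimizer, an immaterial difference). Your justification that the limit set $E$ has positive volume, via Proposition~\ref{prop:CheegerGenProp}(v), is in fact slightly more explicit than the paper's.
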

\begin{proof}
Let $E_{j}$ be a Cheeger set in $\Om_{j}$ (whose existence is guaranteed by Proposition~\ref{prop:CheegerGenProp} (iii)). Without loss of generality we assume that $\liminf\limits_{j\to\infty} P(E_{j})$ is finite, then by Proposition~\ref{prop:semicomp} we deduce that $\chi_{E_{j}}\to \chi_{E}$ in $L^{1}$ as $j\to\infty$, up to subsequences and for some Borel set $E$ with positive volume. Since $E_{j}\subseteq\Om_{j}$ and $\chi_{\Om_{j}}\to \chi_{\Om}$ in $L^{1}$ as $j\to\infty$, one immediately infers that $E\subseteq\Om$ up to null sets. Then by Proposition~\ref{prop:semicomp} and by the convergence of $|E_{j}|$ to $|E|$, one has
\[
h(\Om) \leq \frac{P(E)}{|E|} \leq \liminf_{j\to\infty}\frac{P(E_{j})}{|E_{j}|}=\liminf_{j\to\infty} h(\Om_j)\,,
\] 
which is~\eqref{hlsc}. If in addition $P(\Om_{j})\to P(\Om)$ as $j\to\infty$, then we consider $E$ Cheeger in $\Om$ and define $E_{j} = \Om_{j}\cap E$. One can easily check that $E_{j} \to E$ and $E\cup \Om_{j}\to \Om$ in $L^{1}$, as $j\to\infty$. Therefore by~\eqref{reticolo} we find
\[
\limsup_{j\to\infty} P(E_{j}) \leq P(E) + \limsup_{j\to\infty} P(\Om_{j}) - \liminf_{j\to\infty} P(E\cup \Om_{j})
\leq P(E) + P(\Om) - P(\Om)
= P(E)\,,
\]
which combined with~\eqref{hlsc} gives~\eqref{hc}.
\end{proof}

\subsection{The Cheeger problem in convex domains\label{sectconv}} 
In the particular case when the domain $\Om$ is convex, several further properties are known; in this section, we list some of the most interesting ones. Some of these properties will be later generalized to the case of the strips, which we will introduce in Section~\ref{section:strip}.\par

The first property, which can be found in~\cite{AltCas2009} (see also the references therein), is the following uniqueness and convexity result.
\begin{teo}\label{teo:convC11unique}
Let $\Om\subseteq\R^{n}$ be a convex domain. Then there exists a unique Cheeger set $E$ in $\Om$. Moreover, $E$ is convex and of class ${\rm C}^{1,1}$.
\end{teo}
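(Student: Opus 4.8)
The plan is to establish, in order, existence, uniqueness, convexity, and $C^{1,1}$-regularity, exploiting the interplay between the isoperimetric-type structure of the problem and the convexity of $\Om$. Existence is already guaranteed by Proposition~\ref{prop:CheegerGenProp}~(iii), so the first real step is to prove uniqueness together with convexity, which I would do simultaneously. Suppose $E$ is a Cheeger set in $\Om$; by Proposition~\ref{prop:CheegerGenProp}~(iv) one has $P(E) = h(\Om)|E|$ and $E$ minimizes relative perimeter at fixed volume. I would first show $E$ is convex by a symmetrization/replacement argument: replacing $E$ by its convex hull $\mathrm{conv}(E)$ does not increase the perimeter (since taking convex hulls does not increase perimeter for bounded sets, a classical fact) and does not decrease the volume, while $\mathrm{conv}(E)\subseteq\Om$ because $\Om$ is convex; hence $P(\mathrm{conv}(E))/|\mathrm{conv}(E)| \le P(E)/|E| = h(\Om)$, forcing equality throughout, so $\mathrm{conv}(E)$ is Cheeger and (by the perimeter-hull inequality being strict unless $E$ is already essentially convex) $E$ itself must be convex up to a null set.

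For uniqueness, I would argue by contradiction using the lattice property Proposition~\ref{prop:CheegerGenProp}~(vi): if $E_1\ne E_2$ are both Cheeger in $\Om$, then $E_1\cap E_2$ and $E_1\cup E_2$ are also Cheeger (the intersection being nonempty because, by part~(v), each Cheeger set has volume bounded below by $\om_n(n/h(\Om))^n$, and two large convex subsets of a bounded convex set, both minimizing the same ratio, must overlap — this needs a small separate argument, e.g. if they were disjoint then a suitable interpolating competitor would beat $h(\Om)$). One then has a maximal Cheeger set $E^{\max}$ (the union of all Cheeger sets) which is convex by the previous step, and one derives a contradiction from the constant-mean-curvature condition: the free boundary $\partial^* E\cap\Om$ has mean curvature $h(\Om)/(n-1)$, so each Cheeger set is a "$1/h(\Om)$-rounded" body, and two distinct such bodies inside the same convex $\Om$ with the tangential contact property (vii) on $\partial\Om$ cannot coexist — more concretely, the difference of two CMC competitors at the same volume contradicts strict convexity of the isoperimetric-type functional, or one invokes the first-variation characterization to see the maximal Cheeger set is forced to be the unique one. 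The cleanest route is: since $E^{\max}$ is convex and CMC with curvature $h$, a strong maximum principle / calibration argument shows it is the only Cheeger set.

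Finally, for $C^{1,1}$-regularity: $\partial E\cap\Om$ is analytic with constant mean curvature by Theorem~\ref{teo:regolarita} (the singular set is empty since $n\le$ anything is irrelevant here—actually convexity of $E$ already rules out interior singularities), so the only issue is regularity of $\partial E$ near $\partial\Om$. Here I would combine convexity of $E$ (which gives that $\partial E$ is automatically Lipschitz, indeed differentiable $\mathcal H^{n-1}$-a.e., with one-sided curvature bounds) with the tangential contact property Proposition~\ref{prop:CheegerGenProp}~(vii): where $\partial E$ meets $\partial\Om$, it does so tangentially, and on the portion of $\partial E$ in the interior it has curvature exactly $h(\Om)$; convexity then bounds the curvature of $\partial E$ from above by $h(\Om)$ everywhere (no corners can form, since a corner would be a point of infinite curvature, incompatible with $E$ containing a ball of radius $1/h(\Om)$ rolling inside — this is where Lemma~\ref{lemma:movingball} on rolling balls enters), yielding a uniform two-sided curvature bound and hence $\partial E\in C^{1,1}$.

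The main obstacle I anticipate is the uniqueness step, specifically ruling out the existence of two distinct maximal-volume-ratio minimizers: the lattice property reduces this to showing that the maximal Cheeger set cannot strictly contain a smaller Cheeger set, and the sharp tool for that is the constant-mean-curvature condition combined with a maximum-principle or calibration argument, which must be set up carefully so as to handle the free boundary meeting $\partial\Om$. The convexity and $C^{1,1}$ parts are more routine once uniqueness is in hand, relying on the rolling-ball lemma and the tangential contact property already recorded in the excerpt.
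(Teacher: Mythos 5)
The paper does not prove this statement at all: Theorem~\ref{teo:convC11unique} is quoted directly from the work of Alter and Caselles (reference~\cite{AltCas2009}), so there is no in-paper argument to compare yours against. Judged on its own, your proposal has a concrete gap at its very first substantive step. The claim that passing to the convex hull does not increase perimeter is a planar fact (and even in the plane it requires connectedness: the convex hull of two far-apart discs has larger perimeter than their union). In dimension $n\ge 3$ it is simply false: a small ball joined by thin tubes to many points spread over a large sphere has arbitrarily small perimeter, while its convex hull is essentially the large ball. Since the theorem is stated for all $n\ge 2$, the convexity of $E$ cannot be obtained by the hull-replacement trick; in~\cite{AltCas2009} it is derived through the variational characterization of Cheeger sets as level sets of minimizers of total-variation-type functionals (equivalently, via the family of problems $\min P(F)-\lambda|F|$ and convexity results for those minimizers), which is a genuinely different and much less elementary route.

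The uniqueness step has the same character: you correctly identify that the lattice property reduces matters to excluding a Cheeger set strictly contained in the maximal one, but the decisive step --- the maximum-principle or calibration argument handling the free boundary where $\partial E$ meets $\partial\Om$ --- is announced rather than carried out, and you acknowledge this yourself. Finally, for the $C^{1,1}$ part you invoke Lemma~\ref{lemma:movingball}, but that lemma is stated and proved in this paper only for subsets of $\R^{2}$ (it belongs to the section on planar results and its proof uses that $\partial E\cap\Om$ consists of circular arcs), so it is not available in the generality required here; likewise, the ``interior ball of radius $1/h(\Om)$'' heuristic does not follow from the mean curvature being $h(\Om)/(n-1)$ when $n\ge 3$. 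In short, the proposal correctly identifies the architecture of the result (existence, lattice reduction, CMC free boundary, tangential contact) but the two load-bearing steps --- convexity in general dimension and uniqueness --- are not established, and the first rests on a false inequality for $n\ge 3$.
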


While the previous result holds for any dimension $n\geq 2$, a quite more precise statement holds in the planar case, see~\cite{StrZie1997, KawLac2006}. We will use the following notation: for any set $\Omega$, and any $r>0$, we write
\[
E_{r}:= \{x\in \Om:\ \dist(x,\de \Om)>r\}\,;
\]
in particular, if $r=h(\Omega)^{-1}$, then the set $E_r$ is referred to as the \emph{inner Cheeger set} of $\Omega$, the reason being clear from the next result.
\begin{teo}\label{teo:kawlac}
Let $\Om$ be a bounded convex set in $\R^{2}$. Then the unique Cheeger set $E$ of $\Om$ is the union of all balls of radius $r = h(\Om)^{-1}$ that are contained in $\Om$, hence $E = E_{r}+B_r(0)$. Moreover, it holds
\[
|E_{r}| = \pi r^{2}\,.
\]
\end{teo}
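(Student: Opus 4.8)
The plan is to prove the two assertions in order: first the ``union of balls'' characterization $E = E_r + B_r(0)$, then the inner Cheeger formula $|E_r| = \pi r^2$. Throughout, write $r = h(\Om)^{-1}$ and let $E$ denote the unique Cheeger set provided by Theorem~\ref{teo:convC11unique}; by that result $E$ is convex and of class $C^{1,1}$, so its boundary has curvature bounded by the geometric quantity governing the $C^{1,1}$ bound. By Proposition~\ref{prop:CheegerGenProp}(iv), the part of $\de E$ lying in the interior of $\Om$ consists of arcs of circles of radius $r$ (curvature $h(\Om)$ in dimension $n=2$), curving inward; and by Proposition~\ref{prop:CheegerGenProp}(vii) the contact of $\de E$ with $\de\Om$ is tangential. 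The first inclusion to establish is $E_r + B_r(0) \subseteq E$: for any $x \in E_r$ we have $B_r(x) \subseteq \Om$ by definition of $E_r$, and I would argue that $B_r(x) \subseteq E$ — otherwise one could compare $E$ with $E \cup B_r(x)$, and since the exterior part of $\de B_r(x)$ relative to $E$ is an arc of radius $r$ that can only decrease perimeter more than it adds area at the Cheeger ratio (a standard ``ball rolling/absorption'' argument, exactly the kind of thing encoded in Lemma~\ref{lemma:movingball}), contradicting minimality — or more cleanly, one uses that $E$, being a minimizer, must contain every ball of radius $r$ it meets nontrivially, which follows by the same perimeter comparison. For the reverse inclusion $E \subseteq E_r + B_r(0)$: take any $x \in E$; since $\de E \cap \Om$ is a union of arcs of radius $r$ curving toward the interior of $E$, and $\de E$ meets $\de\Om$ tangentially, the convex set $E$ satisfies an interior ball condition of radius $r$ at each of its boundary points, so $x$ lies in some ball $B_r(y) \subseteq E \subseteq \Om$, whence $y \in E_r$ and $x \in E_r + B_r(0)$.

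Granting the union-of-balls characterization, the inner Cheeger formula follows by a direct computation. Writing $E = E_r + B_r(0)$ with $E_r$ convex (it is the inner parallel set of the convex set $\Om$, hence convex), the Steiner-type formulas in the plane give $|E| = |E_r| + r\,P(E_r) + \pi r^2$ and $P(E) = P(E_r) + 2\pi r$. Plugging these into the Cheeger identity $P(E) = h(\Om)|E| = r^{-1}|E|$, i.e. $r\,P(E) = |E|$, yields
\[
r P(E_r) + 2\pi r^2 = |E_r| + r P(E_r) + \pi r^2,
\]
and the $r P(E_r)$ terms cancel, leaving exactly $|E_r| = \pi r^2$.

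I expect the main obstacle to be the rigorous justification of the reverse inclusion $E \subseteq E_r + B_r(0)$, i.e. that every point of the Cheeger set is covered by an admissible ball. The delicate points are: (a) handling the singular set of $\de^* E$ — but in dimension $2$ the singular set is empty since $n - 8 < 0$, so $\de E \cap \Om$ is genuinely a $C^1$ (indeed analytic) union of circular arcs, which removes this difficulty; (b) ensuring that the circular arcs curve in the correct direction, which follows from convexity of $E$ together with the sign convention on the mean curvature of a volume-constrained minimizer (the Cheeger set sits on the concave side of its free boundary); and (c) the behavior near points where $\de E$ touches $\de\Om$, where one needs the tangential contact of Proposition~\ref{prop:CheegerGenProp}(vii) to glue the interior-ball condition across the contact set and conclude that $E$ itself satisfies a uniform interior ball condition of radius $r$. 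Once that uniform interior ball property is in hand, the inclusion is immediate, and the remaining Steiner computation is routine; the ``rolling ball'' Lemma~\ref{lemma:movingball} is exactly the tool that makes the absorption argument in the first inclusion, and the interior-ball-condition argument in the second, precise.
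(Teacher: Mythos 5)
First, a remark on the comparison itself: the paper does not prove Theorem~\ref{teo:kawlac} at all --- it is quoted from~\cite{StrZie1997,KawLac2006}, with only the comment that the proof rests on Steiner's formulae~\eqref{Asteiner}--\eqref{Psteiner}. So your proposal can only be measured against that classical argument and against the paper's analogous proof for strips (Theorem~\ref{teo:unionedipalle2}). Your second half is fine: granting $E=E_r+B_r(0)$ with $E_r$ convex, the Steiner identities plugged into $rP(E)=|E|$ give $|E_r|=\pi r^2$ exactly as you write, and this is literally Step~IV of the proof of Theorem~\ref{teo:unionedipalle2}.

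The genuine gap is in the reverse inclusion $E\subseteq E_r+B_r(0)$, which you correctly flag as the main obstacle but do not actually close. For a convex ${\rm C}^{1,1}$ set, the uniform interior ball condition of radius $r$ is equivalent to the curvature of $\partial E$ being at most $1/r$ almost everywhere; on the free boundary this holds because the arcs have curvature exactly $1/r$, but on the contact set $\partial E\cap\partial\Om$ the curvature of $\partial E$ is that of $\partial\Om$, which a priori can exceed $1/r$ on a set of positive measure. The tangential contact of Proposition~\ref{prop:CheegerGenProp}~(vii) is a first-order statement and cannot ``glue'' a second-order bound across the contact set; ruling out high-curvature contact is precisely the nontrivial variational step in Kawohl--Lachand-Robert (one compares $E$ with the set obtained by replacing such a contact portion by an arc of curvature $1/r$, using that $E$ minimizes $P(\cdot)-h(\Om)|\cdot|$), and in the strip case the paper has to prove the corresponding fact by hand in Step~II of Theorem~\ref{teo:unionedipalle2}, exploiting the specific geometry of strips. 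Two secondary points: the one-shot absorption ``compare $E$ with $E\cup B_r(x)$'' does not close numerically via submodularity, since $P(B_r)/|B_r|=2h(\Om)$ rather than $h(\Om)$ --- this is exactly why Lemma~\ref{lemma:movingball} rolls the ball through thin slivers instead of adding it in one step; and Lemma~\ref{lemma:movingball} requires a seed ball $B_r(x_0)\subseteq E$, i.e.\ that $E$ (equivalently $\Om$) has inradius at least $r$, which your plan never establishes.
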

The proof of Theorem~\ref{teo:kawlac} is essentially based on \emph{Steiner's formulae} for area and perimeter of tubular neighbourhoods of convex sets in the plane (\cite{Steiner1840}): if $A\subseteq\R^{2}$ is a bounded convex set and $\rho>0$, then setting $A^{\rho} = A + B_{\rho}(0)$ we have
\begin{gather}
\label{Asteiner}
|A^{\rho}| = |A| + \rho\, P(A) + \pi \rho^{2},\\ 
\label{Psteiner}
P(A^{\rho}) = P(A) + 2\pi \rho\,.
\end{gather}
Some generalizations of Steiner's formulae have been proved, for instance by Weyl in the case of $n$-dimensional domains with $C^{2}$ boundary (the so-called tube formula, see~\cite{Weyl1939}) and then by Federer~\cite{Federer1959} under the assumption of \emph{positive reach}. Let us be more precise: given $K\subseteq\R^{n}$ compact, we define the \emph{reach} of $K$ as
\[
 \reach(K) = \sup\{\e\ge 0:\ \text{if $\dist(x,K)\le \e$ then $x$ has a unique projection onto $K$}\}\,.
\] 
We say that $K$ has positive reach if $\reach(K)>0$. Notice that if $K$ is convex, then $\reach(K) = +\infty$. It is convenient to introduce the \emph{outer Minkowski content} of an open bounded set $A$, defined as
\[
\mathcal M(A) = \lim_{\rho\to 0}\frac{|A^{\rho}|-|A|}{\rho}\,,
\]
provided that the limit exists. Then the following result holds.
\begin{prop}\label{steinerformulas}
Let $A\subseteq\R^{2}$ be a bounded open set with Lipschitz boundary and assume that $\overline{A}$ has positive reach. Then Steiner's formulae~\eqref{Asteiner} and~\eqref{Psteiner} hold for all $0<\rho\leq \reach(\overline{A})$.  
\end{prop}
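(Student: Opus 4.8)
The plan is to derive \eqref{Asteiner}--\eqref{Psteiner} from Federer's Steiner formula for sets of positive reach~\cite{Federer1959}, after identifying the two relevant curvature measures of $K:=\overline A$ in elementary terms. Federer's formula, specialized to the compact positive-reach set $K\subseteq\R^2$ and to dimension $n=2$, gives for all $0\le\rho\le\reach(K)$
\[
|A^\rho| = |A| + 2\,\Phi_1(K)\,\rho + \pi\,\Phi_0(K)\,\rho^2\,,
\]
\[
\Hau^1(\de A^\rho) = 2\,\Phi_1(K) + 2\pi\,\Phi_0(K)\,\rho\,,
\]
where $\Phi_0,\Phi_1$ are the Federer curvature measures of $K$; here we used that $\de A$ is Lipschitz, so $|\de A|=0$ and hence $|K|=|A|$, $|K^\rho|=|A^\rho|$, and that $\de K^\rho=\de A^\rho$ since $A^\rho=\{d<\rho\}$ with $d:=\dist(\cdot,\overline A)$. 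For $0<\rho<\reach(K)$ the distance $d$ is of class $C^{1,1}$ with $|\nabla d|\equiv1$ on $\{0<d<\reach(K)\}$ (a standard consequence of positive reach), so $\de A^\rho=\{d=\rho\}$ is a $C^{1,1}$ curve bounding the $C^{1,1}$ domain $A^\rho$; in particular $\de^*A^\rho=\de A^\rho$ and $P(A^\rho)=\Hau^1(\de A^\rho)$ by Theorem~\ref{teo:degiorgi}~(iii). Thus everything reduces to proving $2\Phi_1(K)=P(A)$ and $\Phi_0(K)=1$.

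\emph{The coefficient $\Phi_1$.} Dividing the volume identity by $\rho$ and letting $\rho\to0^+$ shows that the limit defining the outer Minkowski content $\mathcal M(A)$ exists and equals $2\Phi_1(K)$. On the other hand, a bounded open set with Lipschitz boundary satisfies $\mathcal M(A)=\Hau^1(\de A)$, as one checks by covering $\de A$ with finitely many Lipschitz graph patches; and $\Hau^1(\de A)=\Hau^1(\de^*A)=P(A)$ by Theorem~\ref{teo:degiorgi}~(iii), since $\Hau^1(\de A\setminus\de^*A)=0$ for such a set. Hence $2\Phi_1(K)=P(A)$.

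\emph{The coefficient $\Phi_0$.} By the Gauss--Bonnet theorem for sets of positive reach~\cite{Federer1959}, $\Phi_0(K)$ equals the Euler characteristic $\chi(K)$ (equivalently, $2\pi\Phi_0(K)$ is the total signed curvature of the Lipschitz curve $\de A$). Since $A$ is a bounded, connected and simply connected domain, $K=\overline A$ is a topological disc, whence $\chi(K)=1$ and $\Phi_0(K)=1$. Substituting the two computed values into the displayed identities yields $|A^\rho|=|A|+P(A)\rho+\pi\rho^2$ and $P(A^\rho)=P(A)+2\pi\rho$ for every $0<\rho<\reach(\overline A)$. The endpoint $\rho=\reach(\overline A)$ is then recovered by letting $\rho\to\reach(\overline A)^-$: the volume identity passes to the limit by monotone convergence, since $A^{\reach(\overline A)}=\bigcup_{\rho<\reach(\overline A)}A^\rho$; and the perimeter identity follows from the continuity of $\rho\mapsto\Hau^1(\de A^\rho)$, which in turn comes from the bi-Lipschitz normal maps $\de A^{\rho'}\to\de A^\rho$ induced by the projection onto $\overline A$.

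I expect the genuinely delicate point to be the two curvature-measure identifications, and above all $\Phi_0(\overline A)=1$: this is precisely where one uses more than ``positive reach plus Lipschitz boundary'', and the formulae do fail without a restriction on the topology of $A$ — for instance an annulus has Lipschitz boundary and positive reach but $\Phi_0=\chi=0$, so \eqref{Asteiner} breaks down by a term $\pi\rho^2$. This is to be read as a standing assumption that the domain is a topological disc, which is the case in all the applications made in the paper (convex sets and strips). A secondary, purely technical nuisance is the failure of $C^{1,1}$ regularity of $d$ on the level $\{d=\reach(\overline A)\}$, which is why that endpoint value of $\rho$ is handled separately by a limiting argument.
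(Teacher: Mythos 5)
Your proof is correct and arrives at the same formulae, but by a genuinely different route. The paper never introduces Federer's curvature measures: it uses positive reach only to get that the parallel sets $A^t$ are of class ${\rm C}^{1,1}$ for $0<t<\reach(\overline A)$, applies the Weyl tube formula to $A^t$ to write $P(A^s)=P(A^t)+2\pi(s-t)$, integrates via the coarea formula to get $|A^\rho|-|A^t|=(\rho-t)P(A^t)+\pi(\rho-t)^2$, and then sends $t\to 0$, identifying $\lim_t P(A^t)$ with the outer Minkowski content $\mathcal M(A)$ and hence with $P(A)$ by Theorem~9 of~\cite{AmbColVil2008}. Your route quotes Federer's polynomial expansion wholesale and reduces the claim to the two identifications $2\Phi_1(\overline A)=P(A)$ and $\Phi_0(\overline A)=\chi(\overline A)=1$; note that your first identification uses exactly the same Minkowski-content reduction, and the assertion $\mathcal M(A)=\Hau^1(\de A)$ for Lipschitz domains, which you dispatch by ``graph patches'', is precisely what the cited theorem of~\cite{AmbColVil2008} supplies, so you should invoke it rather than reprove it. The genuine added value of your argument is making explicit that the coefficient of $\rho^2$ is $\pi\chi(\overline A)$ rather than $\pi$: as you observe, the statement is literally false for an annulus (Lipschitz boundary, positive reach, $\chi=0$), and the paper's proof carries the same hidden hypothesis inside the appeal to ``Weyl's formula for ${\rm C}^{1,1}$ planar domains'', which also contains the factor $\chi$. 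Since the proposition is only ever applied to inner Cheeger sets of convex domains and of strips, which are shown to be simply connected, nothing breaks downstream, but the topological-disc assumption should indeed be added to the statement. One caveat on your endpoint treatment: the supremum defining $\reach(\overline A)$ need not be attained, so the metric projection can fail to be single-valued on $\{d=\reach(\overline A)\}$ and your bi-Lipschitz normal maps degenerate there; it is safer to obtain $\rho=\reach(\overline A)$ purely by passing to the limit from the left in~\eqref{Asteiner} (monotone convergence for $|A^\rho|$) and in~\eqref{Psteiner}, as the paper does.
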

\begin{proof}
Let $d_{A}$ denote the distance function from $A$. The positive reach assumption implies that $d_{A}$ is of class ${\rm C}^{1,1}_{loc}$ on the open set $\{x:\ 0<d_{A}(x)<\reach(\overline{A})\}$. Moreover its Jacobian is $|\nabla d_{A}(x)|=1$ on this set. Since Weil's extension of the Steiner formula~(\ref{Asteiner}) in particular holds true for ${\rm C}^{1,1}$ planar domains, for any $0<t<\rho<\reach(A)$ we can use coarea formula getting
\[
|A^{\rho}| - |A^{t}| = \int_{t}^{\rho}P(A^s)\, ds = \int_{t}^{\rho }\big(P(A^{t})+2\pi(s-t)\big)\, ds
= (\rho-t)P(A^{t}) + \pi(\rho-t)^{2}\,.
\]
Now, since $|A^{t}| \to |A|$ as $t\to 0$, we obtain the existence of $\lim\limits_{t\to 0} P(A^{t})$ and, calling $P_{0}$ this limit, it holds
\[
\frac{|A^{\rho}|-|A|}{\rho} - \pi \rho = P_{0}\,,\qquad \forall\,0<\rho<\reach(A)\,.
\]
Letting now $\rho\to 0$, we deduce that $\mathcal M(A) = P_{0}$. On the other hand, being $\overline{A}$ of positive reach and with Lipschitz boundary, we can apply Theorem 9 in~\cite{AmbColVil2008} to deduce that $\mathcal M(A) = P(A)$. Then for all $0<\rho<\reach(\overline{A})$ we obtain
\[
P(A) = \mathcal M(A) = P_0 = \frac{|A^{\rho}|-|A|}{\rho} - \pi \rho\,.
\]
By multiplying this last equality by $\rho$ we obtain~\eqref{Asteiner}, while by differentiating~\eqref{Asteiner} we get~\eqref{Psteiner}. The validity also for $\rho=\reach(\overline A)$ follows then by continuity.
\end{proof}

\subsection{Some further results about Cheeger sets in $\R^{2}$\label{sectnew}}
Let $E$ be a Cheeger set inside an open bounded domain $\Om\subseteq\R^{2}$, and set $r = h(\Om)^{-1}$ as before. Then a first, general fact is that any connected component of $\de E\cap \Om$ is an arc of radius $r$, that cannot be longer than $\pi r$ (i.e., it can be at most a half-circle).
\begin{lemma}\label{lemma:180}
Any connected component  $S$ of $\de E\cap \Om$ is an arc of circle of radius $r$, whose length does not exceed $\pi r$.
\end{lemma}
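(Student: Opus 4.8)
\medskip\noindent\emph{Proof strategy.} The first assertion should follow at once from the regularity theory. Indeed, by Proposition~\ref{prop:CheegerGenProp}(iv) and Theorem~\ref{teo:regolarita} with $n=2$, the reduced boundary $\de^{*}E\cap\Om$ is an analytic curve whose curvature is constantly equal to $h(\Om)=1/r$, while the singular set $(\de E\setminus\de^{*}E)\cap\Om$ is empty, its dimension being at most $n-8<0$; hence $\de E\cap\Om=\de^{*}E\cap\Om$ is a smooth $1$-manifold of constant curvature $1/r$. Since a connected curve of constant, non-zero curvature $1/r$ lies on a single circle of radius $r$, every connected component $S$ of $\de E\cap\Om$ is a (possibly complete) arc of a circle $\de B_{r}(c)$. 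To identify the side on which the osculating disk $B:=B_{r}(c)$ lies I would use that $P(F)\ge h(\Om)|F|$ for every Borel $F\subseteq\Om$, with equality for $F=E$, so that $E$ minimises the functional $\mathcal{J}(F):=P(F)-h(\Om)|F|$ among subsets of $\Om$ and $\mathcal{J}(E)=0$; the first variation of $\mathcal{J}$ along normal perturbations of $\de E$ supported inside $\Om$ then forces the curvature vector of $S$ to point towards $E$, i.e.\ $E$ coincides with $B$ in a one-sided neighbourhood of $S$.

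For the bound on the length I would argue by contradiction, assuming that $S$ subtends at $c$ an angle strictly larger than $\pi$ (this also covers the case in which $S$ is the whole circle $\de B_{r}(c)$). Since $S$ is an entire connected component of $\de E\cap\Om$ and $E$ agrees with $B$ on one side of $S$, there is $\delta>0$ so small that the $\delta$-tubular neighbourhood $U$ of $S$ --- or, when $S$ is a proper arc, of a slightly shortened sub-arc still subtending more than $\pi$ --- satisfies $U\compact\Om$ and $E\cap U=B\cap U$. Any set $F$ that agrees with $E$ outside $U$ is then an admissible competitor: it is contained in $\Om$, and its reduced boundary matches that of $E$ along $\de U$. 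The competitor I have in mind is obtained from $E$ by deleting the ``lune'' enclosed between $S$ and the circular arc $S_{\e}$ of radius $r-\e$ passing through the two endpoints of $S$, for $\e>0$ small enough that this lune lies inside $U$ (when $S$ is the whole circle, $F$ is simply $E$ with the thin annular shell $\{\,r-\e<|x-c|<r\,\}$ removed).

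For such an $F$, the difference $\mathcal{J}(F)-\mathcal{J}(E)$ reduces to a one-parameter computation in $\e$ involving only the loss of arc length and the area of the deleted region. Because the circular arc of radius $r$ is a critical point of $\mathcal{J}$, the term linear in $\e$ vanishes, and the sign of $\mathcal{J}(F)-\mathcal{J}(E)$ for small $\e$ is dictated by the quadratic term. The point I expect to be the main obstacle is precisely to show that this quadratic term is strictly negative if and only if the opening angle exceeds $\pi$; this is the classical instability of spherical caps larger than a hemisphere, here in the slightly annoying situation in which the competitor arc must be reconnected to the two fixed endpoints of $S$ (when $S$ is the whole circle there are no endpoints and the computation is explicit, giving $\mathcal{J}(F)-\mathcal{J}(E)=-\pi\e^{2}/r<0$). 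Granting the sign, one gets $P(F)<h(\Om)|F|$ for small $\e>0$, contradicting the definition of $h(\Om)$; the remaining, routine points are the choice of $U$ and the verifications that $E=B$ on $U$ and that the lune lies inside $U$, both of which follow from $S$ being a full connected component of $\de E\cap\Om$ together with the one-sided-neighbourhood property established above.
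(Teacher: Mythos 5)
Your strategy works and is, in essence, the mirror image of the paper's argument: where you flatten the arc \emph{inward} (replacing it by a less curved arc through the same endpoints and deleting the resulting lune from $E$), the paper bulges a sub-arc \emph{outward} (it replaces the sub-arc $S'\subseteq S$ of length exactly $\pi r$ centred at the midpoint of $S$ by the arc through the endpoints of $S'$ with opening angle $\pi+2\eps$ and radius $r/\cos\eps$, and adds the lune to $E$). The computation you postpone does close: parametrize by $\rho$ the major arcs through two fixed points at distance $2r\sin\phi$, with half-opening $\phi(\rho)$ determined by $\rho\sin\phi(\rho)=r\sin\phi$; writing $\mathcal J(\rho)=\ell(\rho)-\tfrac1r A(\rho)$ for the length of the arc minus $1/r$ times the area it encloses with the chord, one finds $A'(\rho)=\rho\,\ell'(\rho)$, hence $\mathcal J'(r)=0$ and $\mathcal J''(r)=-\tfrac1r\ell'(r)=-\tfrac{2}{r}\left(\phi-\tan\phi\right)$, which is strictly negative precisely when $\phi>\pi/2$ (opening angle $>\pi$), since then $\tan\phi<0$. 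Thus $\mathcal J(r-\eps)-\mathcal J(r)=-\tfrac1r(\phi-\tan\phi)\,\eps^2+o(\eps^2)<0$, giving $P(F)<h(\Om)|F|$ as you predicted; note that your inward deformation wins already at second order, whereas in the paper's outward version the first two orders cancel and the gain only appears at order $\eps^3$.

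The one point you must repair is the localization, where your write-up is internally inconsistent: the lune between $S$ and an arc through \emph{the endpoints of $S$} cannot lie in a tubular neighbourhood $U\compact\Om$, because those endpoints lie on $\de\Om$ and the lune reaches them. Near these points you have no control on $E$ --- indeed $\de\Om$ is tangent to $S$ there by Proposition~\ref{prop:CheegerGenProp}~(vii) and could a priori cut into the lune, spoiling both the area and the perimeter bookkeeping. The fix is the one you mention parenthetically but then do not use: deform a strictly shorter \emph{closed} sub-arc $S'$ of $S$, still subtending an angle greater than $\pi$, whose endpoints are interior points of $S$ and hence of $\Om$, and take the competitor arc through the endpoints of $S'$. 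Then the lune is compactly contained in $\Om$ and, for $\eps$ small, in the one-sided neighbourhood of $S'$ where $E$ coincides with the osculating disk; the second-variation formula above applies verbatim with $\phi$ the half-opening of $S'$. This is exactly how the paper sidesteps the issue, by working with the half-circle sub-arc $S'$ of length $\pi r$, which is compactly contained in $\Om$ precisely because $S$ is assumed longer than $\pi r$.
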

\begin{proof}
By Proposition~\ref{prop:CheegerGenProp}~(iv) we already know that $S$ is a circular arc of radius $r$. Assume then by contradiction that this arc has length $l>\pi r$. Consider the arc $S' \subseteq S$ with length equal to $\pi r$ and whose mid-point coincides with that of $S$: notice that $S'$ lies entirely in the interior of $\Om$. Call now $x$ the center of the corresponding circle, and let now $S''$ be the arc of circle centered at $x_\eps:= x+r\tan(\eps)\nu$, connecting the two endpoints of $S'$ and having opening angle equal to $\pi+2\eps$. Since $S'$ is contained in the open set $\Om$, the same holds true for $S''$ if $\eps$ is small enough. For such a small $\eps$, let us then define the competitor $E_\eps$, slightly bigger than $E$, whose boundary coincides with $(\partial E \setminus S') \cup S''$. Now we compute the perimeter and the area of $E_{\e}$: we have
\[
P(E_{\e}) = P(E) -\pi r + (\pi+2\eps) \, \frac r{\cos \eps}
= \frac{|E|}r +2r \eps  + \frac{\pi r}2\,  \eps^2+ o(\eps^3)\,,
\]
while
\[
|E_{\e}| = |E| - \frac \pi 2 \, r^2 + \frac{\pi+2\eps}2 \cdot \frac{r^2}{\cos^2\eps} + r^2\tan\eps
=|E| +2r^2 \eps+ \frac{\pi r^2}2\, \eps^2+ \frac{r^2}3\, \eps^3+ o(\eps^3)\,.
\]
Therefore, the Cheeger ratio of $E_{\e}$ satisfies
\[
\frac{P(E_{\e})}{|E_{\e}|} < \frac 1r = h(\Om),
\]
which contradicts the definition of $h(\Om)$.
\end{proof}

A seemingly reasonable property of a planar Cheeger set $E$ is the fact that $E$ satisfies an internal ball condition of radius $r = \frac{|E|}{P(E)}$, or that it is a union of balls of radius $r$ (this second property is slightly stronger). This fact is false in general (see Figure~\ref{fig:cheegerinunion} and, in particular, Example~\ref{bow-tie}). Anyway, the following result holds true: if a maximal Cheeger set $E$ in $\Om$ contains some ball $B_{r}(x_{0})$, then it contains also all the balls which can be obtained by ``rolling'' $B_r(x_0)$ inside $\Om$.
\begin{lemma}[Rolling ball]\label{lemma:movingball}
Assume that the maximal Cheeger set $E$ in $\Om$ contains a ball $B_{r}(x_{0})$, being $r=1/h(\Om)$, and let $\gamma:[0,1]\to \Om$ be a ${\rm C}^{1,1}$ curve, with curvature bounded by $h(\Om)$, such that $\gamma(0)=x_{0}$ and $B_{r}(\gamma(t))\subseteq\Om$ for all $t\in [0,1]$. Then $B_{r}(\gamma(t))\subseteq E$ for all $t\in [0,1]$.
\end{lemma}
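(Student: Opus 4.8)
The plan is to prove the apparently stronger statement that the enlarged set
\[
\widetilde E:=E\cup\bigcup_{t\in[0,1]}B_{r}(\gamma(t))
\]
is itself a Cheeger set in $\Om$. Since $\widetilde E\subseteq\Om$ (by the hypothesis $B_{r}(\gamma(t))\subseteq\Om$) and $E$ is the maximal Cheeger set, this forces $\widetilde E\subseteq E$, i.e.\ $B_{r}(\gamma(t))\subseteq E$ for every $t$, which is the claim. Now, for every Borel $F\subseteq\Om$ with $|F|>0$ one has $P(F)\ge h(\Om)|F|=r^{-1}|F|$ by definition of $h(\Om)$; applying this to $\widetilde E$ and subtracting the identity $P(E)=r^{-1}|E|$ (valid because $E$ is Cheeger) gives, for free, $P(\widetilde E)-P(E)\ge r^{-1}|\widetilde E\setminus E|$. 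Hence everything boils down to the opposite inequality
\[
P(\widetilde E)\ \le\ P(E)+\tfrac1r\,|\widetilde E\setminus E|,
\]
which rewrites as $P(\widetilde E)\le r^{-1}|\widetilde E|$, i.e.\ $P(\widetilde E)/|\widetilde E|\le h(\Om)$, so that $\widetilde E$ is Cheeger.

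To prove this inequality I would discretize the tube. Consider the functional $J(F):=P(F)-\tfrac1r|F|$ on Borel subsets of $\Om$: it is nonnegative, vanishes precisely on Cheeger sets, and is \emph{submodular}, since $P$ satisfies the lattice inequality~\eqref{reticolo} and Lebesgue measure is modular. Fix a partition $0=t_{0}<t_{1}<\dots<t_{N}=1$ of small mesh, put $B_{j}:=B_{r}(\gamma(t_{j}))$, and define $F_{0}:=E$, $F_{j}:=F_{j-1}\cup B_{j}$; note $B_{0}=B_{r}(x_{0})\subseteq E$. Submodularity applied to $F_{j-1}$ and $B_{j}$ gives $J(F_{j})\le J(F_{j-1})+J(B_{j})-J(F_{j-1}\cap B_{j})$, and since $J(B_{j})=2\pi r-\tfrac1r\pi r^{2}=\pi r$ for every $j$ while $J(F_{0})=J(E)=0$, telescoping yields
\[
J(F_{N})\ \le\ \sum_{j=1}^{N}\Big(\pi r-J\big(F_{j-1}\cap B_{j}\big)\Big).
\]
The core of the argument is therefore the estimate $J(F_{j-1}\cap B_{j})\ge\pi r-\varepsilon_{j}$ with $\sum_{j}\varepsilon_{j}\to 0$ as the mesh tends to $0$.

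This local estimate is where the hypotheses really enter. Writing $R_{j}:=F_{j-1}\cap B_{j}$ and $d_{j}:=|\gamma(t_{j})-\gamma(t_{j-1})|$, the inclusion $B_{j-1}\subseteq F_{j-1}$ together with the curvature bound on $\gamma$ shows that the two balls $B_{j-1},B_{j}$, of the same radius $r$, overlap in a lens that exhausts $B_{j}$ as the mesh shrinks; hence $B_{j-1}\cap B_{j}\subseteq R_{j}\subseteq B_{j}$ and $B_{j}\setminus R_{j}$ lies in the thin crescent $B_{j}\setminus B_{j-1}$, of area $O(d_{j})$. On the other hand, $\partial R_{j}$ is contained in $\partial B_{j}$ together with the part of $\partial F_{j-1}$ interior to $\Om$, and the latter, by Proposition~\ref{prop:CheegerGenProp}(iv) applied to $E$ and by the fact that each $\partial B_{i}$ is a circle of radius $r$, consists of circular arcs of radius exactly $r$. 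Since a radius-$r$ arc cannot stay close to the radius-$r$ circle $\partial B_{j}$, the perimeter ``given up'' on $\partial B_{j}$ when passing from $B_{j}$ to $R_{j}$ must be balanced, to leading order, by the new arc-perimeter created on $\partial F_{j-1}$ plus the term $\tfrac1r|B_{j}\setminus R_{j}|$; a Taylor expansion in $d_{j}$ — of exactly the flavour of the one in the proof of Lemma~\ref{lemma:180}, where two half-circles of total length $\approx\pi r$ cancel against the area term — then gives $J(R_{j})\ge\pi r-Cd_{j}^{2}$. Since $\sum_{j}d_{j}^{2}\le(\max_{j}d_{j})\sum_{j}d_{j}\le(\max_{j}d_{j})\cdot\mathrm{length}(\gamma)\to0$, we conclude $J(F_{N})\to0$ along a sequence of partitions with vanishing mesh.

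To finish, choose such partitions nested, so that the resulting sets increase to $\widetilde E$; then $\chi_{F_{N}}\to\chi_{\widetilde E}$ in $L^{1}$ and $|F_{N}|\to|\widetilde E|$ by monotone convergence, so lower semicontinuity of the perimeter (Proposition~\ref{prop:semicomp}) gives $J(\widetilde E)\le\liminf_{N}J(F_{N})=0$; as $J(\widetilde E)\ge0$ because $\widetilde E\subseteq\Om$, we get $J(\widetilde E)=0$, i.e.\ $\widetilde E$ is a Cheeger set, and the maximality of $E$ concludes the proof. I expect the genuinely delicate point to be the per-step estimate $J(R_{j})\ge\pi r-Cd_{j}^{2}$: one must rule out thin, ``perimeter-cheap'' configurations of $R_{j}$ inside $B_{j}$ and make the cancellation of the leading-order $\pi r$ terms completely explicit, and it is exactly there that the curvature hypothesis on $\gamma$ and the constant-curvature structure of $\partial E$ cannot be dispensed with.
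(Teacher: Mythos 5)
Your global strategy --- enlarge $E$ by the swept tube of balls, show the enlarged set is still Cheeger, and invoke maximality --- is exactly the paper's, and your reduction via submodularity of $J(F)=P(F)-r^{-1}|F|$ to the single inequality $J(F_{j-1}\cap B_j)\ge \pi r-\varepsilon_j$ with $\sum_j\varepsilon_j\to 0$ is clean. But that inequality, which you state as $J(R_j)\ge \pi r-Cd_j^2$ and defer to ``a Taylor expansion in $d_j$'', is the entire content of the lemma, and it is not a routine verification. First, it is \emph{false} for a general set sandwiched between the lens $L_j=B_{j-1}\cap B_j$ and $B_j$: take $R=L_j\cup A$ where $A$ is the portion of the crescent $B_j\setminus B_{j-1}$ within angular distance $\theta_0$ of its tip. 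A direct computation gives $P(R)=2\pi r-2d_j+2d_j\cos\theta_0+O(d_j^2)$ and $|R|=\pi r^2-2rd_j+2rd_j\sin\theta_0+O(d_j^2)$, hence $J(R)=\pi r+2d_j(\cos\theta_0-\sin\theta_0)+O(d_j^2)$, which for $\theta_0>\pi/4$ is $\pi r - c\,d_j$ with $c>0$. So the estimate cannot follow from the soft facts you invoke ($R_j\supseteq L_j$, $R_j\subseteq B_j$, small symmetric difference); it must exploit the precise structure of $\partial F_{j-1}$ inside $B_j$ (radius-$r$ arcs with $F_{j-1}$ on the concave side, forced positions of the corresponding osculating disks relative to $B_{j-1}$ and $B_j$), and proving it at that level of precision is at least as hard as the lemma itself. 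Second, the quadratic order is essential and cannot be relaxed: a first-order deficit $\varepsilon_j=O(d_j)$ --- which is what isoperimetric-type bounds such as $P(R_j)\ge 2\sqrt{\pi|R_j|}$ actually deliver --- telescopes to $\sum_j\varepsilon_j\sim \mathrm{length}(\gamma)$, which does not vanish as the mesh shrinks. As it stands, the proof has a genuine gap precisely where you yourself locate the difficulty.

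It is instructive to compare with how the paper sidesteps this. Rather than discretizing, it works at the first exit time $t^*$ of the rolling ball from $E$, shows that any obstructing component of $\partial E$ must be a full half-circle of length $\pi r$ lying on $\partial B_r(\gamma(t^*))$, and then adds the \emph{whole} swept tube $E^+\setminus E^-$ in one shot, for which the exact formulae $P(E^+)-P(E^-)=2\ell$ and $|E^+\setminus E^-|=2r\ell$ make the Cheeger ratio of $E\cup E^+$ computable with no error term at all when $E$ does not meet $E^+\setminus E^-$. The only delicate case --- $F=E\cap(E^+\setminus E^-)\ne\emptyset$, the analogue of your ``$\partial E$ pokes into the crescent'' --- is then handled by a curvature comparison giving $P(F;E^+\setminus E^-)\ge P(F)/2$ together with the blow-up of $P(F)/|F|$ as $|F|\to 0$. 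That last argument is soft and works because it is used exactly once, with $t^+-t^*$ small; it would not survive being summed over $N\to\infty$ discretization steps, which is why your route demands the sharp second-order cancellation that the paper's route avoids. If you want to salvage your approach, the per-step estimate must be proved by classifying the possible components of $B_j\setminus F_{j-1}$ inside the crescent and showing that each contributes $a\le b+r^{-1}|N|+O(d_j^3)$, where $a$ is the exposed length of $\partial B_j$, $b$ the length of the radius-$r$ arcs of $\partial F_{j-1}$ bounding it, and $N$ its area --- and that is essentially a reworking of the paper's Step on the half-circle arc.
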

\begin{proof}
Let $t^*\in [0,1]$ be the largest time for which $B_r(\gamma(t))\subseteq E$ for all $t\in [0,t^{*}]$, set $B^{*} = B_r(\gamma(t^{*}))$ for brevity, and assume by contradiction that $t^*<1$. Take now $t\in (t^*,1)$, very close to $t^*$ and such that $B_r(\gamma(t))$ is not contained in $E$, while it is contained in $\Om$ by hypothesis. This implies that there exists some $x\in B_r(\gamma(t))\setminus E$, thus in particular there is some $x\in B_r(\gamma(t))\cap \partial E$, recalling that $B^*\subseteq E$. Consider now the connected component $S$ of $\partial E$ passing through $x$: we know that it is an arc of circle, with radius $r$, and with both endpoints in $\partial \Om$. Recall that a Cheeger set is always in the ``interior part'' of the arcs of circle forming its boundary or, in other words, the curvature of $\partial E$ is always positive (and equal to $1/r$) inside $\Om$. Since $t$ is very close to $t^*$ and since the endpoints of the arc $S$ must be outside $E$, thus outside both balls $B^*$ and $B_r(\gamma(t))$, which have the same boundary curvature as that of $S$, we infer that the length of $S$ is at least $\pi r - \eps$, where $\eps$ can be chosen arbitrarily small if $t$ and $t^*$ are close enough.\par

Notice now that $\partial E$ has only finitely many connected components with length greater than $\pi r -\eps$, then we can assume the existence of a sequence $t_n \searrow t^*$ as before, such that the same arc $S$ intersects the interior of each ball $B_r(t_n)$. Since $t_n$ is converging to $t^*$, the distance between $S$ and $\partial B^*$ must be zero, and then, also recalling Lemma~\ref{lemma:180}, we obtain that $S$ is exactly an arc of circle of length $\pi r$ inside $\partial B^*$.\par

Let us now take some $t^+\in (t^*,1)$, and consider the sets
\begin{align*}
E^-= \bigcup_{0<t<t^*} B_r(\gamma(t))\,, && 
E^+= \bigcup_{0<t<t^+} B_r(\gamma(t)) \,.
\end{align*}
It can be easily proven (and this will be explicitely done later in Section~\ref{section:strip}) that the following exact formulae hold
\begin{align*}
P(E^+)-P(E^-) = 2\ell\,, && |E^+\setminus E^-|= 2r\ell\,,
\end{align*}
being $\ell$ the length of $\gamma$ restricted to $(t^*,1)$. Calling $\widehat E=E \cup E^+$, we aim to show that $\widehat E$ is also a Cheeger set, which gives a contradiction with the maximality of $E$. If $E$ does not intersect $E^+\setminus E^-$, this is clear, because since $E$ is a Cheeger set then
\[
\frac{P(\widehat E)}{|\widehat E|} =\frac{P(E) +P(E^+)-P(E^-) }{|E| + |E^+|-|E^-|} = \frac{h(\Om)|E|+ 2\ell}{|E| + 2 r\ell}=h(\Om)\,,
\]
so $\widehat E$ is also a Cheeger set, a contradiction.\par
To conclude, suppose then that $F:=E \cap \big(E^+\setminus E^-\big)\neq \emptyset$, and notice that, as a consequence,
\begin{align}\label{pergip}
|\widehat E| = |E| + |E^+|-|E^-| - |F|\,, && 
P(\widehat E) = P(E) + P(E^+) - P(E^-) - P(F;E^+\setminus E^-)\,.
\end{align}
Since $\partial F$ has constant curvature $h(\Om)$ inside $E^+\setminus E^-$, while $\partial(E^+\setminus E^-)$ has curvature smaller than $h(\Om)$ by construction, we infer that
\[
P(F; E^+\setminus E^-) \geq \frac{P(F)}2\,.
\]
Finally, since by choosing $t^+$ close enough to $t^*$ we have that $|F|$ is arbitrarily small, hence by the isoperimetric inequality $P(F)/|F|$ is arbitrarily big, from~(\ref{pergip}) we directly obtain $P(\widehat E)/|\widehat E|\leq h(\Om)$, which gives again a contradiction.
\end{proof}

\begin{oss}\label{oss:movingball}\rm
The requirement in Lemma~\ref{lemma:movingball} of the maximality of $E$ can be dropped whenever the rolling ball remains at a positive distance from $\de\Om$. In this case, the previous proof ensures that the rolling ball will never intersect $\de E$, so that the claim follows.
\end{oss}

\section{Characterization of Cheeger sets in planar strips\label{section:strip}}

In recent years, there has been an increasing interest in the geometrical and spectral study of three-dimensional waveguides, or in their two-dimensional counterpart, the ``strips''. Basically, a waveguide is a generalization of a cylinder; more precisely, while a cylinder is obtained by taking a segment and considering the union of equal disks centered on the points of the segment and orthogonal to the segment itself, a waveguide is done in the very same way, just substituting the segment with a sufficiently regular curve. They are important in many applications, for instance for their optical properties. A number of interesting questions concern the spectral properties of the waveguides in the limit when they become extremely thin; for more information, the reader could see~\cite{DucExn1995,KreKri2005} and the references therein. In the two-dimensional case, a strip is constructed by taking a sufficiently regular curve and considering the union of equal segments centered on the points of the curve and orthogonal to the curve itself. In this sense, strips are generalized rectangles.\par\smallskip

Let us now give all the formal definitions concerning the strips. Let $\gamma: [0,L]\to \R^2$ be a ${\rm C}^{1,1}$ curve, with $\gamma(0)\neq \gamma(L)$, parametrized by arc-length. For every $t\in [0,L]$, we denote by $\sigma(t)$ the relatively open segment of length $2$ centered in $\gamma(t)$ and orthogonal to $\gamma'(t)$, and we define the set
\[
\strip:= \bigcup_{t\in (0,L)} \sigma(t)\,.
\]
Let us denote by $\Psi$ the obvious parametrization of $\strip$ on $(0,L)\times (-1,1)$; more precisely, calling $\nu(t)$ the normal vector whose direction is obtained by counter-clockwise rotating $\gamma'(t)$ of $90^\circ$, the map $\Psi: (0,L)\times (-1,1)\to \strip$ is given by
\[
\Psi(t,\rho) := \gamma(t) +\rho\nu(t)\,.
\]
Finally, we say that $\strip$ is an \emph{open strip of width $2$} if the map $\Psi$ is a ${\rm C}^{1,1}$ diffeomorphism: notice that this implies, in particular, that all the open segments $\sigma(t)$ are disjoint, hence in particular that the curvature of $\gamma$ is always at most $1$. The curve $\gamma$ is sometimes called the \emph{spinal curve} of the strip $\strip$.\par

It is possible to consider the case of the \emph{closed strips} (or \emph{annuli}), which roughly speaking correspond to the case when $\gamma(0)=\gamma(L)$; however, in this article we concentrate ourselves to the case of the open strips because the other case was already completely discussed in~\cite{KrePra2011}. It is also possible to consider the open strips of any width $2s$, where the segments $\sigma(t)$ have length $2s$ instead of $2$, but we prefer to fix the width to $2$ for simplicity of notations, since the general case can be trated by a trivial rescaling.\par
\begin{figure}[ht]
\centering
\includegraphics[scale=1]{cheegerfig-0}
\caption{A planar strip $\strip$.}
\label{fig:strip}
\end{figure}
The study of the behaviour of the strips in the Cheeger problem has been already started in~\cite{KrePra2011}, see also~\cite{PrSa14}. In particular, an interesting feature of the Cheeger problem for strips is that some of the properties which are valid in the convex case (recall the discussion of Section~\ref{sectconv}) are still valid for the strips, which are not convex. Another interesting fact is a two-sided bound on the Cheeger constant of strips when their length goes to infinity, found in the paper~\cite{KrePra2011} (see Theorem~\ref{teo:KrePra} below). Notice that, up to a rescaling, having fixed the width of the strips and letting their length go to infinity is completely equivalent to fix the length and consider the limit when they become infinitely thin, which is the physically relevant case, as said before.\par

The aim of the present article is to push forward the understanding of the properties of the convex case still valid for the strips, and to prove a refinement of the bounds on the Cheeger constant found in~\cite{KrePra2011} --basically, while the bounds there were at the first order, we obtain a second order kind of estimates, see Teorem~\ref{teo:asintotico} below. The following estimate is the one proved in~\cite{KrePra2011}.
\begin{teo}\label{teo:KrePra}
Let $\strip$ be a strip of length $L$ and width $2$. Then
\[
1 + \frac{1}{400\,L} \leq h(\strip) \leq 1 + \frac 2L\,.
\]
\end{teo}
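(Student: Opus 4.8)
The plan is to prove the two inequalities separately, the upper bound being elementary and the lower bound being the real content.

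\emph{Upper bound.} I would take $\strip$ itself as a competitor, so that $h(\strip)\le P(\strip)/|\strip|$. With the parametrization $\Psi(t,\rho)=\g(t)+\rho\nu(t)$, whose Jacobian is $1-\rho k(t)$ ($k$ the signed curvature of $\g$, with $|k|\le1$ since $\strip$ is a strip), one gets $|\strip|=\int_0^L\!\int_{-1}^1(1-\rho k(t))\,d\rho\,dt=2L$, the term linear in $\rho$ integrating to zero. The long sides $t\mapsto\g(t)\pm\nu(t)$ have speeds $1\mp k(t)$, hence total length $\int_0^L\big((1-k)+(1+k)\big)\,dt=2L$; adding the two short sides $\sigma(0),\sigma(L)$ of length $2$ gives $P(\strip)=2L+4$ and $h(\strip)\le 1+2/L$.

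\emph{Lower bound.} The core is an \emph{additive} relative isoperimetric inequality in $\strip$: there is a universal $c_0>0$ with $P(E)-|E|\ge c_0$ for every connected finite-perimeter $E\subseteq\strip$ of positive measure, apart from one degenerate case in which $P(E)/|E|$ is already bounded away from $1$ by a universal amount. Granting this, if $E$ is a \emph{minimal} Cheeger set in $\strip$ (it exists and is automatically connected, since each connected component of a minimal Cheeger set is itself Cheeger), then $P(E)=h(\strip)\,|E|$ gives $(h(\strip)-1)|E|\ge c_0$, and since $|E|\le|\strip|=2L$ one concludes $h(\strip)\ge 1+c_0/(2L)$; being deliberately wasteful in the choice of $c_0$ (this is where the crude $400$ comes from) one may take $c_0=1/200$.

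To obtain the additive inequality I would first build a divergence-free-like field adapted to $\strip$. Looking for $Y(\Psi(t,\rho))=g(t,\rho)\,\nu(t)$, a direct computation gives $\mdiv Y=\de_\rho g-\frac{k(t)}{1-\rho k(t)}\,g$; solving this linear ODE in $\rho$ with the (forced) normalization $g(t,\pm1)=\pm1$ yields $g(t,\rho)=\big(\rho-\tfrac12\rho^2 k(t)-\tfrac12 k(t)\big)/(1-\rho k(t))$, which satisfies $\mdiv Y\equiv1$ and $|Y|=|g|\le1$ on $\strip$, with $|g|=1$ exactly on the two long sides. (This is precisely where non-convexity bites: the inner long side bends into $\strip$, so $1-\rho k$ is \emph{not} bounded below by a universal constant and a naive ``vertical'' field fails — one genuinely needs this ODE-adapted $g$; this is also why the circular free-boundary arcs of $\de E$, though curving into $E$, cannot make $P(E)$ drop below $|E|$.) The divergence theorem gives $|E|=\int_{\de^*E}Y\cdot\nu_E\,d\H^1\le P(E)$, so already $h(\strip)\ge1$, and since $|\nu_E|=1$,
\[
P(E)-|E|=\int_{\de^*E}\big(1-Y\cdot\nu_E\big)\,d\H^1 .
\]
Decomposing $\nu_E=\nu_E^{\parallel}\g'(t)+\nu_E^{\perp}\nu(t)$ (so $Y\cdot\nu_E=g\,\nu_E^{\perp}$) and using $|g|\le1$ gives $1-Y\cdot\nu_E\ge\tfrac12(\nu_E^{\parallel})^2+(1-|g|)|\nu_E^{\perp}|$; since $1-|g(t,\rho)|$ is bounded below by a universal $c>0$ on $\{|\rho|\le 3/4\}$ and $(\nu_E^{\parallel})^2+|\nu_E^{\perp}|\ge1$ on the unit circle, one obtains $P(E)-|E|\ge c\,\H^1\big(\de^*E\cap\{|\rho|\le 3/4\}\big)$.

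Finally, for the connected Cheeger set $E$ I would slice in the depth variable $\rho$. If $\H^1(\de^*E\cap\{|\rho|\le 3/4\})<\eps$, the coarea formula shows that all but an $\eps$-fraction of the depths $\rho_0\in(-3/4,3/4)$ have the fibre $E\cap\{\rho=\rho_0\}$ (up to null sets) empty or the whole fibre. If some such non-trivial ``empty'' depth occurs, connectedness forces $E$ into one of the sub-strips $\{\rho<\rho_0\}$, $\{\rho>\rho_0\}$, of width $<7/4$; applying the bound $h\ge1$ to that thinner strip after rescaling gives $P(E)/|E|\ge 8/7$, which exceeds $1+1/(400L)$ as soon as $L$ is bounded below by a small absolute constant (the remaining very small $L$ being handled directly by comparison of $\strip$ with a thin enclosing rectangle). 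Otherwise every such depth is ``full'', so by Fubini $E$ contains up to a null set the band $\Psi\big((0,L)\times F\big)$ with $F\subseteq(-3/4,3/4)$, $|F|\ge\tfrac32-\eps$, and a density-point argument forces $\de^*E$ to contain the portion of each short side $\sigma(0),\sigma(L)$ at depths in $F$, where the outer normal of $E$ is orthogonal to $\nu$, so $1-Y\cdot\nu_E\equiv1$ and $P(E)-|E|\ge 2(\tfrac32-\eps)$; choosing $\eps$ a small fixed number gives $P(E)-|E|\ge c_0$ in all non-degenerate cases. The main obstacle is exactly the non-convexity, which forces the ODE-adapted field and makes the geometry near the long sides delicate; a secondary nuisance is reconciling topological connectedness with the measure-theoretic slices in the trichotomy, and the fact that extracting an explicit universal $c_0$ forces a very lossy, far-from-optimal constant.
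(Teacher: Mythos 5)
The paper never proves Theorem~\ref{teo:KrePra}: it is imported verbatim from~\cite{KrePra2011}, so there is no internal proof to compare against, and your argument has to be judged on its own terms. As a strategy it is coherent, and I have checked the key computations. The upper bound via $P(\strip)/|\strip|=(2L+4)/(2L)$ is exactly Proposition~\ref{prop:stripAP}. For the lower bound, your calibrant $g(t,\rho)=\big(\rho-\tfrac12 k(t)(1+\rho^2)\big)/(1-\rho k(t))$ does solve $\mdiv Y=1$ with $|g|\le 1$; indeed $1\mp g=(1\mp\rho)\big(1\pm\tfrac{k}{2}(1\mp\rho)\big)/(1-\rho k)\ge(1-\rho^2)/4$, so $1-|g|\ge 7/64$ on $\{|\rho|\le 3/4\}$. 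The pointwise inequality $1-g\,\nu_E^{\perp}\ge\tfrac12(\nu_E^{\parallel})^2+(1-|g|)\,|\nu_E^{\perp}|$ is correct, the three-way alternative (much interior boundary in $\{|\rho|\le 3/4\}$ / an empty slice / all slices full) is exhaustive, a minimal Cheeger set is indeed indecomposable, and the constants close: $\eps=1/10$ gives $c_0=7/640>1/200$ in the first case, $2(\tfrac32-\eps)$ in the third, and $8/7\ge 1+\tfrac{1}{400L}$ for $L\ge 7/400$ in the degenerate case, with the remaining tiny-$L$ range covered by the enclosing-rectangle comparison you mention (note that the isoperimetric inequality alone would \emph{not} suffice there, since $\sqrt{2\pi/L}\ll 1/(400L)$ as $L\to 0$).

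The one genuine gap is the regularity of $Y$. A strip has only a ${\rm C}^{1,1}$ spinal curve, so $k\in L^{\infty}$ and $Y=g(t,\rho)\,\nu(t)$ is in general a bounded \emph{discontinuous} Borel field. The identity $|E|=\int_E\mdiv Y=\int_{\de^{*}E}Y\cdot\nu_E\,d\H^{1}$, on which every one of your three cases rests, is then not the classical divergence theorem; for merely bounded divergence-measure fields the Gauss--Green formula holds with a normal trace that need not agree $\H^{1}$-a.e.\ with the pointwise product $Y\cdot\nu_E$ that your estimates use. This is repairable rather than fatal: one can exploit that $Y$ has only a $\nu$-component, so that in the $(t,\rho)$ chart the Gauss--Green identity reduces to a one-dimensional Fubini/Vol'pert slicing argument in $\rho$ with $t$ (hence $k(t)$) frozen; alternatively, one can first treat ${\rm C}^{2}$ spinal curves, where $Y$ is continuous, and pass to the limit using Theorem~\ref{teo:hcontinua}. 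But as written, ``the divergence theorem gives'' is not justified, and since it is the load-bearing step of the entire lower bound it must be addressed explicitly.
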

We will be able to prove the following improvement of the above estimate.
\begin{teo}\label{teo:asintotico}
Let $\strip$ be an open strip of length $L$ and width $2$. Then 
\begin{align}\label{stripasymptotic}
h(\strip) = 1+ \frac{\pi}{2L} + O(L^{-2})\qquad \text{as }L\to +\infty\,.
\end{align}
\end{teo}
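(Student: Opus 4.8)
The plan is to obtain the asymptotics from the \emph{inner Cheeger formula for strips} --- the analogue of Theorem~\ref{teo:kawlac}, which is among the convex-type properties proved for strips in this section. Thus I take for granted that the Cheeger set of $\strip$ is $E=E_r+B_r$ with $r=h(\strip)^{-1}$, where $E_r=\{x\in\strip:\dist(x,\partial\strip)>r\}$, and that
\[
|E_r|=\pi r^2 .
\]
Put $\eps:=1-r$; by Theorem~\ref{teo:KrePra} we have $0<\eps<2/L$, so $\eps\to0$ as $L\to\infty$. Everything then reduces to the two-sided estimate
\[
2\eps\,(L-9\pi)\;\le\;|E_r|\;\le\;2\eps L ,
\]
because, inserting $|E_r|=\pi(1-\eps)^2$ and using $0<\eps<2/L$, the right inequality gives $\eps\ge\frac{\pi}{2L}+O(L^{-2})$ and the left one gives $\eps\le\frac{\pi}{2(L-9\pi)}=\frac{\pi}{2L}+O(L^{-2})$, whence $\eps=\frac{\pi}{2L}+O(L^{-2})$ and $h(\strip)=(1-\eps)^{-1}=1+\eps+O(\eps^2)=1+\frac{\pi}{2L}+O(L^{-2})$.

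Two elementary features of the parametrisation $\Psi(t,\rho)=\gamma(t)+\rho\nu(t)$ carry the proof. Writing $\kappa$ for the signed curvature of $\gamma$ (so $|\kappa|\le1$), one has $\Psi_t=(1-\rho\kappa)\gamma'$ and $\Psi_\rho=\nu$, orthogonal with $|\Psi_\rho|=1$; hence the Jacobian of $\Psi$ is $1-\rho\kappa(t)>0$ and its $\rho$-odd part integrates away, $\int_{-a}^{a}(1-\rho\kappa(t))\,d\rho=2a$ for all $a\in(0,1]$. Therefore the area of the $\Psi$-image of any $A\times(-a,a)$ equals \emph{exactly} $2a\,|A|$, with no curvature correction. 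Secondly, the transverse coordinate $\rho\colon\strip\to(-1,1)$ has $|\nabla\rho|\equiv1$ (again since $\Psi_t\perp\Psi_\rho$ and $|\Psi_\rho|=1$), so it extends to a $1$-Lipschitz function on $\overline{\strip}$ taking the value $\pm1$ on the long sides $\Gamma^{\pm}:=\{\gamma(t)\pm\nu(t):0<t<L\}\subseteq\partial\strip$.

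The upper bound is immediate: if $\Psi(t,\rho)\in E_r$ then $\dist(\Psi(t,\rho),\partial\strip)\le1-|\rho|$ (comparing with the points $\gamma(t)\pm\nu(t)\in\partial\strip$), so $|\rho|<\eps$; hence $E_r\subseteq\Psi\big((0,L)\times(-\eps,\eps)\big)$ and $|E_r|\le2\eps L$. For the lower bound, let $T:=\{t\in(0,L):\dist(\gamma(t),\sigma(0))>1\text{ and }\dist(\gamma(t),\sigma(L))>1\}$. If $t\in T$ and $|\rho|<\eps$, then $\dist(\Psi(t,\rho),\sigma(0)\cup\sigma(L))\ge\dist(\gamma(t),\sigma(0)\cup\sigma(L))-|\rho|>r$, while also $\dist(\Psi(t,\rho),\Gamma^{+}\cup\Gamma^{-})\ge1-|\rho|>r$: joining $\Psi(t,\rho)$ to its nearest point on $\Gamma^{+}\cup\Gamma^{-}$ and cutting the segment at its first intersection with $\partial\strip$, the $1$-Lipschitz bound on $\rho$ yields the estimate if that first exit lies on $\Gamma^{\pm}$, and if it lies on an end segment the previous inequality applies. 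Since $\partial\strip=\Gamma^{+}\cup\Gamma^{-}\cup\sigma(0)\cup\sigma(L)$, this gives $\Psi\big(T\times(-\eps,\eps)\big)\subseteq E_r$, so $|E_r|\ge2\eps\,|T|$. Finally $(0,L)\setminus T$ is short: if $\dist(\gamma(t),\sigma(0))\le1$ then $|\gamma(t)-\gamma(0)|\le2$, so $\Psi$ maps $\{t:|\gamma(t)-\gamma(0)|\le2\}\times(-1,1)$ into $B_3(\gamma(0))$; by the no-correction identity, $2\,|\{t:|\gamma(t)-\gamma(0)|\le2\}|\le|B_3(\gamma(0))|=9\pi$, and similarly near $\gamma(L)$, so $|(0,L)\setminus T|\le9\pi$ and $|E_r|\ge2\eps(L-9\pi)$.

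The delicate step is the lower bound on $|E_r|$: one must control how much of the spinal curve can run within distance $\sim1$ of the end segments $\sigma(0),\sigma(L)$, since an intricately coiled strip could a priori fold distant arcs of $\gamma$ back towards $\gamma(0)$. The resolution is the area-in-a-ball estimate above, which works only because the strip has fixed width $2$ and does not overlap itself, so only a bounded length of it can sit inside any given ball; and it is the exact ``no curvature correction'' identity that makes the passage between arc-length of $\gamma$ and planar area clean. The accompanying point --- that the portions of $\partial\strip$ which can come within distance $r<1$ of an interior point $\Psi(t,\rho)$ with $|\rho|$ small are only the nearby pieces of $\Gamma^{\pm}$ (contributing precisely $1-|\rho|$) and the end segments --- is what the $1$-Lipschitz transverse coordinate is for. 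Both are intuitively plain but, as the authors observe for several similar statements, need a careful argument.
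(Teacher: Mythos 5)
Your proof is correct and follows essentially the same route as the paper's: both take the inner Cheeger formula $|E_r|=\pi r^2$ from Theorem~\ref{teo:unionedipalle2} as the key input, sandwich $|E_r|$ between $2(1-r)L$ and $2(1-r)(L-C)$ with $C=O(1)$ by combining the exact sub-strip area identity (Proposition~\ref{prop:stripAP} and Remark~\ref{lenpar}) with the observation that only a bounded length of the spinal curve can lie near the end segments $\sigma(0),\sigma(L)$, and then conclude by the same elementary computation. Your write-up is somewhat more explicit than the paper's on the two geometric facts it uses tacitly (the $1$-Lipschitz transverse coordinate and the area-in-a-ball bound for the exceptional parameter set), and your constant $9\pi$ differs harmlessly from the paper's $4\pi+8$.
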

It is important to underline that the asymptotic estimate~\eqref{stripasymptotic} is optimal. The main brick to prove it is the following theorem, which is our main result concerning strips.
\begin{teo}\label{teo:unionedipalle2}
Let $\strip$ be an open strip of length $L\geq 9\pi/2$, and let $r=h(\strip)^{-1}$. Then there is a unique Cheeger set $E$ of $\strip$, which can be described as 
\begin{equation}\label{intergrafico}
E = \Psi\left(\{(t,s):\ 0< t< L,\ \rho^{-}(t)<s<\rho^{+}(t)\}\right)
\end{equation}
where $\rho^{+},\rho^{-}:[0,L]\to [-1,1]$ are two suitable continuous functions. Moreover, $E$ coincides with the union of all balls of radius $r$ contained in $\strip$, it is simply connected, and it can be obtained as the Minkowski sum $E = E_{r}+B_{r}$, where
\[
E_{r} = \{x\in \strip:\ \dist(x,\de\strip)\geq r\}
\]
is a set with Lipschitz boundary and positive reach $\reach(E_{r})\geq r$. Finally, the inner Cheeger formula
\begin{equation}\label{ICformula}
|E_{r}| = \pi r^{2}
\end{equation}
holds true.
\end{teo}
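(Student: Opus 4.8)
The plan is to proceed in several steps, combining the rolling-ball lemma, the $180^\circ$ length bound on boundary arcs, and an analysis of the competitor obtained by ``fattening'' the spinal curve. First I would show that any Cheeger set $E$ of $\strip$ contains a ball $B_r$ of radius $r=h(\strip)^{-1}$ rolling along (a subsegment of) the spinal curve. To do this, one first bounds $r$ from below away from $0$ and from above away from $1$ using Theorem~\ref{teo:KrePra}: since $L\ge 9\pi/2$ one gets $r$ bounded away from $1$, so $B_r(\gamma(t))\subseteq\strip$ at least for $t$ in a middle portion of $[0,L]$ (the endpoints of the strip force the ball to stay away from the two end-segments $\sigma(0),\sigma(L)$, but the curvature bound $\le 1$ on $\gamma$ is exactly $\le h(\strip)$ since $r<1$, so $\gamma$ itself is an admissible rolling curve). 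By Proposition~\ref{prop:CheegerGenProp}(v) the Cheeger set has area at least $\pi r^2\ge\pi/4$, so it cannot be squeezed into an end region; hence it must contain at least one ball $B_r(\gamma(t_0))$ with $t_0$ interior, and then Lemma~\ref{lemma:movingball} (applied to the maximal Cheeger set) propagates this to $B_r(\gamma(t))\subseteq E$ for every $t$ with $B_r(\gamma(t))\subseteq\strip$, i.e.\ for $t$ in the full admissible subinterval $[a,b]\subseteq[0,L]$.

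Next I would identify $E$ with the union of all balls of radius $r$ contained in $\strip$. Call $F$ that union; by the previous step $F\subseteq E$ for the maximal Cheeger set, and $F = E_r+B_r$ by definition of $E_r$. Using the strip parametrization $\Psi$, the set $E_r$ is precisely the image under $\Psi$ of $\{(t,s): a\le t\le b,\ |s|\le 1-r\}$ (the distance of $\Psi(t,s)$ from $\de\strip$ being computable since $\Psi$ is a $C^{1,1}$ diffeomorphism and the curvature of $\gamma$ is $\le 1$); in particular $E_r$ has Lipschitz boundary and, because the inner curves $s=\pm(1-r)$ have curvature $\le 1/(1-r)$ and $\reach$ is governed by this together with the width, one checks $\reach(E_r)\ge r$. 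The reverse inclusion $E\subseteq F$ is the key geometric point: by Proposition~\ref{prop:CheegerGenProp}(iv) and Lemma~\ref{lemma:180}, $\de E\cap\strip$ consists of circular arcs of radius $r$ and length $\le\pi r$, curving towards the interior of $E$; since $F$ already contains the ``thick tube'' $E_r+B_r$ and $\de E$ must match $\de\strip$ tangentially at contact points by Proposition~\ref{prop:CheegerGenProp}(vii), any point of $E\setminus F$ would force a free boundary arc of $E$ enclosing a region disjoint from $F$, and cutting it off strictly decreases the Cheeger ratio (a variational argument like in Lemma~\ref{lemma:180}), contradiction. This also yields that $E$ is simply connected and that $\rho^\pm$ exist and are continuous: on $[a,b]$ one has $\rho^\pm=\pm1$ near the ``core'' and the endpoints are capped by the two half-disk arcs, so $E$ is a graph over the spinal curve between two continuous functions, giving~\eqref{intergrafico}.

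The final step is the inner Cheeger formula~\eqref{ICformula}. Since $\overline{E_r}$ has positive reach $\ge r$ and Lipschitz boundary, Proposition~\ref{steinerformulas} applies with $\rho=r$, giving $|E|=|E_r+B_r|=|E_r|+r\,P(E_r)+\pi r^2$ and $P(E)=P(E_r)+2\pi r$. Plugging these into the Cheeger identity $P(E)=h(\strip)|E|=r^{-1}|E|$ yields $rP(E_r)+2\pi r^2 = |E_r|+rP(E_r)+\pi r^2$, hence $|E_r|=\pi r^2$. Uniqueness of the Cheeger set then follows: any Cheeger set is sandwiched between the minimal and maximal ones by Proposition~\ref{prop:CheegerGenProp}(vi), both of which equal $F$ by the argument above, so $E$ is unique.

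The main obstacle I anticipate is the reverse inclusion $E\subseteq F$ (equivalently, ruling out any part of $E$ sticking out beyond the maximal union of inscribed balls near the two ends of the strip): one must carefully use the tangential contact property~(vii), the arc-length bound of Lemma~\ref{lemma:180}, and a quantitative area/perimeter estimate for the cap regions to show that the end of $E$ is exactly a half-disk of radius $r$ and nothing protrudes. The hypothesis $L\ge 9\pi/2$ should be exactly what is needed to guarantee enough ``room'' in the middle for the rolling ball and to control the two caps independently; getting this bookkeeping right is the delicate part.
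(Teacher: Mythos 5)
Your overall skeleton (roll a ball to get one inclusion, use the tangential contact and arc structure for the other, then Steiner's formulae for the inner Cheeger formula) matches the paper, and your Step for~\eqref{ICformula} is essentially identical to the paper's Step~IV. However, there are several genuine gaps. First, the existence of a ball $B_r\subseteq E$ does not follow from the area bound $|E|\geq\pi r^2$ of Proposition~\ref{prop:CheegerGenProp}(v): a set of area $\pi r^2$ need not contain any ball of radius $r$. The paper produces such a ball via the arc-ball property (Lemma~\ref{lemma1.1}), one of two key lemmas proved specifically for this theorem, which shows every free boundary arc of $E$ has its osculating ball entirely inside $\strip$. Second, rolling along the spinal curve only reaches balls centered at $\gamma(t)$, whereas the union $F$ of \emph{all} inscribed balls of radius $r$ also contains balls centered at $\Psi(t,\rho)$ with $\rho\neq 0$; to conclude $F\subseteq E$ you need the ball-to-ball connectivity property (Lemma~\ref{lemma1.2}), i.e.\ that any two inscribed balls can be joined by an admissible rolling path, and your proposal neither states nor proves this. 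Your explicit description of $E_r$ as $\Psi(\{a\le t\le b,\ |s|\le 1-r\})$ is also not quite right near the ends, where $\partial E_r$ is cut by segments parallel to $\sigma(0)$ and $\sigma(L)$ at Euclidean distance $r$, not by level sets of the parameter $t$.

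The most serious problem is the reverse inclusion $E\subseteq F$, which you correctly identify as the crux but for which your proposed mechanism fails. The claim that ``any point of $E\setminus F$ would force a free boundary arc of $E$ enclosing a region disjoint from $F$, and cutting it off strictly decreases the Cheeger ratio'' cannot be right as stated: Example~\ref{bow-tie} exhibits a domain whose Cheeger set strictly contains the union of inscribed balls of radius $r$, and there the excess region near the concave corners is not separated from the rest by any free boundary arc, so no such cut is available. Whatever argument proves $E\subseteq F$ must use the specific geometry of strips; the paper does this by noting $r>1/2$ (so every point of $E$ is within $2r$ of $\partial E$, reducing the claim to showing the internally tangent ball of radius $r$ at each $p\in\partial E$ lies in $\strip$) and then running a case analysis over the four pieces of $\partial\strip$, relying again on both key lemmas. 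Finally, your uniqueness argument is incomplete: Lemma~\ref{lemma:movingball} requires maximality, so your chain of inclusions only identifies the \emph{maximal} Cheeger set with $F$, and you have not shown a minimal Cheeger set equals $F$. The paper instead derives uniqueness in its Step~I by showing that the four arcs forming $\partial E\cap\strip$ are determined by $\strip$ alone, independently of which Cheeger set $E$ one starts from.
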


\begin{oss}\rm
We stress that the conclusions of Theorem~\ref{teo:unionedipalle2} (in particular, the fact that the Cheeger set $E$ is the union of all balls of radius $r$ contained in $\strip$, and that~\eqref{ICformula} holds true) are typical properties of the convex case, keep in mind Theorem~\ref{teo:kawlac}, but they are not satisfied by a generic planar domain. Two examples showing that no inclusion between Cheeger sets and unions of balls of radius $r$ is generally true, are given in the last section (Examples~\ref{ex:cheegerinunion} and~\ref{bow-tie}). Concerning the inner Cheeger formula~\eqref{ICformula}, there exists a star-shaped domain whose Cheeger set is the union of all included balls of radius $r$, but for which the formula fails (see Example~\ref{loose bow-tie}). 
\end{oss}


The plan of the remaining of this section is the following. First, we prove a simple technical relation between perimeter and area of a strip and length of the corresponding spinal curve: in particular, we will see that perimeter and area of a strip depend only on the length of the spinal curve, not on the shape or curvature of the curve itself. Then, we will use this relation to give a straightforward proof of Theorem~\ref{teo:asintotico} by means of Theorem~\ref{teo:unionedipalle2}. After that, we will state and prove our two ``key lemmas''. And finally, we will conclude the section with the proof of Theorem~\ref{teo:unionedipalle2}.

\begin{prop}\label{prop:stripAP}
Let $\strip$ be a strip of length $L$ and width $2$. Then the area and the perimeter of $\strip$ are given, respectively, by
\begin{align}
\label{APstrip}
|\strip| = 2L\,, && P(\strip) = 2L+4\,.
\end{align}
\end{prop}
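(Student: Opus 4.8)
The plan is to compute the area and perimeter of $\strip$ by exploiting the diffeomorphism $\Psi:(0,L)\times(-1,1)\to\strip$ and the change of variables formula. First I would compute the Jacobian of $\Psi$. Since $\Psi(t,\rho)=\gamma(t)+\rho\nu(t)$, we have $\partial_\rho\Psi=\nu(t)$ and $\partial_t\Psi=\gamma'(t)+\rho\nu'(t)$. Because $\gamma$ is parametrized by arc-length and $\nu(t)$ is the unit normal, the Frenet-type relations give $\nu'(t)=-k(t)\gamma'(t)$, where $k(t)$ is the signed curvature of $\gamma$ (with $|k|\le 1$ as noted in the definition of the strip); hence $\partial_t\Psi=(1-\rho k(t))\gamma'(t)$. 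Since $\gamma'(t)$ and $\nu(t)$ form an orthonormal pair, the Jacobian determinant of $\Psi$ is $|1-\rho k(t)|$, and because $\Psi$ is a diffeomorphism this quantity never vanishes, so $1-\rho k(t)>0$ throughout (it is positive for $\rho=0$). Integrating, $|\strip|=\int_0^L\int_{-1}^1 (1-\rho k(t))\,d\rho\,dt=\int_0^L 2\,dt=2L$, since $\int_{-1}^1\rho\,d\rho=0$. This gives the area formula.

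For the perimeter, I would describe $\partial\strip$ explicitly. Since $\Psi$ is a ${\rm C}^{1,1}$ diffeomorphism up to the boundary, $\partial\strip$ is the image of the topological boundary of the rectangle $[0,L]\times[-1,1]$, which decomposes into four pieces: the two ``long sides'' $\Psi((0,L)\times\{1\})$ and $\Psi((0,L)\times\{-1\})$, and the two ``short ends'' $\Psi(\{0\}\times[-1,1])$ and $\Psi(\{L\}\times[-1,1])$. The two short ends are straight segments of length $2$ each (they are precisely $\sigma(0)$ and $\sigma(L)$, closed up), contributing $4$ to the perimeter. For each long side, say $\rho\equiv 1$, its length is $\int_0^L|\partial_t\Psi(t,1)|\,dt=\int_0^L(1-k(t))\,dt=L-\int_0^L k(t)\,dt$; similarly the side $\rho\equiv-1$ has length $\int_0^L(1+k(t))\,dt=L+\int_0^L k(t)\,dt$. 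Adding the two long sides, the terms $\pm\int_0^L k(t)\,dt$ cancel, giving a total length of $2L$ for the two long sides. Hence $P(\strip)=2L+4$.

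The one point that requires a little care — and which I expect to be the only genuine obstacle — is justifying that $P(\strip)$, in the measure-theoretic sense of the paper, really equals the sum of the $\Hau^1$-lengths of these four boundary arcs, i.e. that $\strip$ has finite perimeter with $P(\strip)=\Hau^1(\partial\strip)$ and no hidden overlaps or cusps. This follows because $\Psi$ extends to a bi-Lipschitz homeomorphism of the closed rectangle onto $\overline{\strip}$ (a consequence of $\Psi$ being a ${\rm C}^{1,1}$ diffeomorphism with Jacobian bounded away from $0$ on the compact closure), so $\strip$ has Lipschitz boundary, and then Theorem~\ref{teo:degiorgi}(iii) together with the classical identification of perimeter with the $\Hau^1$-measure of the (topological $=$ reduced) boundary for Lipschitz domains applies; the four arcs meet only at the four corner points, which have $\Hau^1$-measure zero, so their lengths simply add. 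With this in hand, the computations above yield exactly~\eqref{APstrip}.
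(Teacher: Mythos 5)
Your proposal is correct and follows essentially the same route as the paper: compute the Jacobian $1-\rho\kappa(t)$ of $\Psi$, integrate over $(0,L)\times(-1,1)$ for the area, and add the two lateral segments (total length $4$) to the lengths of the upper and lower curves, whose curvature terms cancel, for the perimeter. The extra remark identifying the distributional perimeter with $\Hau^1(\partial\strip)$ via the bi-Lipschitz structure is a careful touch the paper leaves implicit, but it does not change the argument.
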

\begin{proof}
Let $\g$ be the spinal curve of $\strip$ and notice that, since $\gamma$ is parametrized by arc-length, the curvature $\kappa(t)$ is simply given by
\[
\gpp(t) = \kappa(t)\nu(t)\,,
\]
being $\nu(t)$ the normal vector to the curve $\gamma$ at $t$ obtained by counter-clockwise rotating $\gamma'(t)$ by $90^\circ$; as already noticed, the curvature $\kappa$ is always between $1$ and $-1$. Recall now that the map $\Psi$ is a ${\rm C}^{1,1}$ diffeomorphism between the rectangle $(0,L)\times (-1,1)$ and the strip $\strip$, being $\Psi(t,\rho) = \g(t) + \rho\cdot \nu(t)$. The Jacobian of $\Psi$ can then easily be calculated as
\[
J(t,\rho) = \big\|(\gp(t) + \rho\cdot \gpp(t)^{\perp})\wedge \nu(t)\big\| =
|1-\rho \kappa(t)|=1-\rho \kappa(t)\,,
\]
where the last equality comes from the fact that $|\rho|\leq 1$ and $|\kappa(t)|\leq 1$. Therefore, a simple change of variables gives us directly the following formula for the area of the strip $\strip$:
\[
|\strip| = \int_0^{L} \int_{-1}^1 1-\rho\kappa(t)\, d\rho\,dt = 2L\,.
\]
Instead, the perimeter is obtained adding the length of the ``left'' and ``right'' side of the boundary of the strip (that is, $\sigma(0)$ and $\sigma(L)$, each of which having length $2$) to the length of the ``top'' and ``bottom'' parts, corresponding to $\rho=\pm 1$, so we find
\[
P(\strip) = 4+\int_0^L J(t,1)+J(t,-1)\, dt = 4 + \int_0^L \big(1 - \kappa(t)\big)+\big(1 + \kappa(t)\big)\,dt= 4+2L\,,
\]
so~(\ref{APstrip}) is obtained and the proof is concluded.
\end{proof}
\begin{oss}\label{lenpar}
More generally, the above proof shows that if $\Gamma$ is any measurable subset of the spinal curve $\gamma$, and $G$ is the corresponding union of segments $\sigma(t)$ issuing from $\gamma(t)\in\Gamma$, we have  $|G| = 2\Hau^{1}(\Gamma),$ where $\Hau^{1}$ denotes the one-dimensional Hausdorff measure in $\R^{2}$. 
\end{oss}

\begin{proof}[Proof of Theorem~\ref{teo:asintotico}]
Let us start by estimating the area of the set $E_r$ defined in Theorem~\ref{teo:unionedipalle2}: let $x\in\strip$ be a point, and let $(t,\rho)\in (-L,L)\times (-1,1)$ be defined as $(t,\rho)=\Psi^{-1}(x)$. We clearly have $x\notin E_r$ if $|\rho|>1-r$, thus $E_r$ is contained in the strip of width $2(1-r)$ with the same spinal curve $\gamma$; by Proposition~\ref{prop:stripAP}, this implies
\[
|E_r| \leq 2(1-r)L\,;
\]
in particular, we get that $r<1$, because otherwise $E_r$ would be empty, and this would be against the claim of Theorem~\ref{teo:unionedipalle2}. On the other hand, take $x=\Psi(t,\rho)$ and assume that $|\rho|\leq 1-r$; by the definition of the strips, $x$ belongs to $E_r$ unless it has a distance smaller than $r$ from one of the two segments $\sigma(0)$ and $\sigma(L)$. If this is the case, then the whole segment $\sigma(t)$ has distance smaller than $2$ from either $\sigma(0)$ or $\sigma(L)$, since any point of $\sigma(t)$ has distance less than $2-r$ from $x$. If we call then
\[
\Gamma=\Big\{ \gamma(t):\, \exists \rho\in \big[-(1-r),1-r\big],\, \Psi(t,\rho)\notin E_r\Big\}\,,
\]
then the strip corresponding to $\Gamma$ in the sense of Remark~\ref{lenpar} has an area $2\H^1(\Gamma)$. Being this area concentrated in the two zones of points having distance less than $2$ from $\sigma(0)$ and from $\sigma(L)$, we deduce
\[
2\H^1(\Gamma) \leq 2\big( 4\pi + 8)\,.
\]
Summarizing, the strip corresponding to the whole spinal curve without $\Gamma$, and with width $2(1-r)$, is contained in $E_r$, hence recalling again Proposition~\ref{prop:stripAP} we finally derive
\[
|E_r| \geq 2(1-r)\Big(L -4\pi-8\Big) \,.
\]
Then, the inner Cheeger formula~\eqref{ICformula} gives
\[
2(1-r)\Big(L -4\pi-8\Big) \leq \pi r^{2} \leq 2(1-r)L\,,
\]
which finally implies~\eqref{stripasymptotic} by an elementary computation, recalling that $h(\strip)=1/r$.
\end{proof}

Let us claim and prove the two key lemmas, which will later be used for the proof of Theorem~\ref{teo:unionedipalle2}. The first lemma states that, if $E$ is a Cheeger set in $\strip$, and the length of $\strip$ is large enough, then any osculating ball to $\de E\cap \strip$ is entirely contained in $\strip$ (see Figure~\ref{fig:arcball}).
\begin{figure}[ht]
\centering
\includegraphics[scale=1]{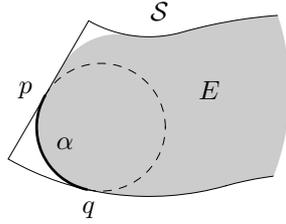}
\caption{The arc-ball property of a Cheeger set inside a strip.}
\label{fig:arcball}
\end{figure}

\begin{lemma}[Arc-ball property]\label{lemma1.1}
Let $E$ be a Cheeger set inside a strip $\strip$ of length $L\geq \frac{9\pi}2$. Set $r = h(\strip)^{-1}$. Then $\de E\cap \strip$ is non-empty, and for any circular arc $\alpha$ contained in $\de E\cap \strip$ the ball $B_{r}$, such that $\alpha\subseteq\de B_{r}$, is entirely contained in $\strip$.
\end{lemma}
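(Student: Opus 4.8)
The plan is to reduce the arc--ball property, via a contradiction argument, to the two facts already at hand: the length bound of Lemma~\ref{lemma:180} and the tangential contact property of Proposition~\ref{prop:CheegerGenProp}(vii). Two preliminary observations come first. By Theorem~\ref{teo:KrePra} we have $h(\strip)\ge 1+\tfrac{1}{400L}>1$, hence $r=h(\strip)^{-1}<1$; this ``smallness'' of $r$ is crucial, since it makes the diameter $2r$ of the balls $B_r$ strictly smaller than the width $2$ of $\strip$. Next, to see that $\de E\cap\strip\neq\emptyset$: if it were empty then $\chi_E$ would be a.e.\ locally constant on the connected open set $\strip$, forcing $E=\strip$ up to a null set; but $\strip$ is \emph{not} a Cheeger set, because it has four corners with opening exactly $\tfrac\pi2$ --- indeed $\sigma(0),\sigma(L)$ are orthogonal to $\gp$, while $\frac{d}{dt}\Psi(t,\pm1)=(1\mp\kappa)\gp$ shows the long sides have tangent parallel to $\gp$ at the endpoints --- and removing from $\strip$ the intersection with a small ball of radius $\delta$ at a corner decreases $P$ by $(2-\tfrac\pi2)\delta+o(\delta)$ and $|\cdot|$ only by $\tfrac\pi4\delta^2+o(\delta^2)$, so $P/|\cdot|$ strictly decreases, contradicting $P(\strip)/|\strip|=h(\strip)$. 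Hence $\de E\cap\strip\neq\emptyset$.

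For the arc--ball property I would fix a circular arc $\alpha\subseteq\de E\cap\strip$ and the ball $B=B_r(c)$ with $\alpha\subseteq\de B$; by Proposition~\ref{prop:CheegerGenProp}(iv) the radius is $r$ and the curvature points into $E$, so $B$ lies on the $E$-side of $\alpha$. Being a connected component of $\de E\cap\strip$, $\alpha$ is in fact a \emph{single} circular arc (by the interior analyticity of Theorem~\ref{teo:regolarita}) whose two endpoints lie on $\de\strip$; moreover $\de E$ has no closed loop inside $\strip$, since a full circle of radius $r$ with curvature into $E$ would bound a hole of the wrong curvature sign. Suppose, for contradiction, that $B\not\subseteq\strip$, and first treat the generic case in which both endpoints of $\alpha$ lie on the long sides $\Psi(\cdot,\pm1)$. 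Then Proposition~\ref{prop:CheegerGenProp}(vii) forces $\de B$ tangent to $\de\strip$ at these endpoints, and since $E$ lies on the $\strip$-side there, a short computation with the contact condition gives $c\in\strip$ with $d:=\dist(c,\de\strip)<r$ (concretely $c=\Psi(t_p,\pm(1-r))$ for an endpoint $p=\Psi(t_p,\pm1)$, and $1-r\in(0,1)$). Because $2r<2$, the ball meets $\de\strip$ only through a single ``window'', so $\de B\cap\strip$ is a single open arc $\hat\alpha$, obtained from $\de B$ by deleting the cap cut off by that window; an elementary estimate, using $d<r$ and the geometry of $\de\strip$ near the window (the end segments are straight, and the long sides, where they curve towards the interior of $\strip$, do so with curvature $<1<1/r$, while where $|\kappa|$ is near $1$ they bend \emph{away} from $B$), shows this cap subtends less than $\pi$, so $\hat\alpha$ subtends more than $\pi$. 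But the endpoints of $\alpha$ lie on $\de\strip$ while the relative interior of $\hat\alpha$ lies in the open set $\strip$, whence $\alpha=\hat\alpha$ and $\alpha$ has length $>\pi r$ --- contradicting Lemma~\ref{lemma:180}.

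What remains, and what I expect to be the delicate part, is the case where $\alpha$ comes close to an end of $\strip$: an endpoint of $\alpha$ lies on $\sigma(0)$ or $\sigma(L)$ (or is one of the corners, where Proposition~\ref{prop:CheegerGenProp}(vii) gives nothing), or $c$ lies near a corner and $\de B$ leaves $\strip$ through two windows at once. This is where the hypothesis $L\ge\tfrac{9\pi}{2}$ enters: an arc of length $\le\pi r<\pi$ that reaches one end is, by this length bound, kept well away from the other end, so near $\alpha$ the boundary of $\strip$ consists only of the straight segment $\sigma(0)$ (or $\sigma(L)$) together with the two $C^{1,1}$ arcs of $\Psi(\cdot,\pm1)$ issuing from its endpoints at right angles, all of curvature $<1/r$ where they bend into $\strip$. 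I would then carry out a direct planar analysis of this finite, explicitly parametrized configuration --- once more comparing the curvature $1/r>1$ of $\alpha$ with that of $\de\strip$ and invoking Lemma~\ref{lemma:180} --- to rule out an escaping osculating ball here as well, which finishes the proof. The bookkeeping at the corners --- tracking through which window $\de B$ leaves $\strip$, and verifying that the surviving portion of $\de B\cap\strip$ still exceeds $\pi r$ --- is the only point where genuine care is needed.
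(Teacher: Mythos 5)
Your preliminary observations ($r<1$ via Theorem~\ref{teo:KrePra}, and the corner-cutting argument showing $\strip$ is not Cheeger in itself, hence $\de E\cap \strip\neq\emptyset$) match the paper's Step~I. The trouble begins with your case decomposition. The case you work out in detail --- both endpoints of $\alpha$ on the long sides $\Psi(\cdot,\pm1)$ --- is in fact vacuous, and the paper disposes of it with a two-line tangency argument: if both endpoints lie on the same long side, say $p=\Psi(t,1)$ and $q=\Psi(t',1)$ with $t\neq t'$, the tangential contact of Proposition~\ref{prop:CheegerGenProp}(vii) forces the center of $B_r$ to lie on both $\sigma(t)$ and $\sigma(t')$, which are disjoint; if they lie on opposite long sides, tangency forces $p=\Psi(t,1)$ and $q=\Psi(t,-1)$, two points of $\de B_r$ at distance $2>2r$. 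Your alternative route through ``the surviving arc $\hat\alpha$ subtends more than $\pi$'' hinges on the unproven assertion that the cap of $\de B_r$ cut off by $\de\strip$ subtends less than $\pi$ (and on the ball exiting through a single ``window''); since the curvature of a long side blows up where $|\kappa|\to 1$, this is exactly the kind of estimate that must be written out rather than asserted.

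More seriously, the case you defer as ``the delicate part'' --- an endpoint of $\alpha$ on $\sigma(0)$ or $\sigma(L)$ --- is the \emph{only} case that actually occurs for a Cheeger set in a strip, and it is where all the content of the lemma lies; announcing that you ``would carry out a direct planar analysis'' is not a proof. This is also where the hypothesis $L\geq\frac{9\pi}{2}$ genuinely enters, and not in the way you describe: the issue is not that a short arc reaching one end stays far from the other end, because in a nearly closed, C-shaped strip the two lateral sides $\sigma(0)$ and $\sigma(L)$ can be spatially close even though $L$ is huge, so your claim that ``near $\alpha$ the boundary of $\strip$ consists only of $\sigma(0)$ and the two long sides'' fails in general. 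The paper excludes an arc joining $\sigma(0)$ to $\sigma(L)$ by noting that such an arc would meet every segment $\sigma(t)$, forcing $\strip\subseteq B_3(c)$ and hence $2L=|\strip|<9\pi$. Once one knows that each component of $\de E\cap\strip$ has exactly one endpoint on a lateral side and one on a long side, the center of the osculating ball is pinned at $\Psi(t,\pm(1-r))$, and any point of $\de\strip$ inside $B_r$ is excluded by the same tangency arguments together with the observation that a point of the long sides inside $B_r$ would lie within distance $1$ of $\gamma(t)$, contradicting the strip structure. You need to supply this case in full before the proof can be considered complete.
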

\begin{proof}
We split the proof in three steps.
\medskip

\step{I}{$\strip$ is not Cheeger in itself, and $r<1$.}
It is immediate to observe that, if a set has a concave corner, then a small cut around it (of size $\eps$) decreases the perimeter of order $\eps$, and the area only of order $\eps^2$, hence for $\eps$ small enough the Cheeger ratio is decreased. This simple remark ensures that a Cheeger set can never contain a concave corner, thus in particular a set with concave corners cannot be Cheeger in itself. In particular, having four $90$-degrees corners, a strip is never Cheeger in itself (and since a strip is connected, this implies that $\partial E\cap \strip$ is not empty). Moreover, Theorem~\ref{teo:KrePra} ensures that $h(\strip)>1$, hence $r<1$.

\step{II}{Any connected component of $\partial E\cap \strip$ ``cuts a corner'' of $\strip$.}
Let $\alpha$ be a connected component of $\de E\cap \strip$, let $p,\,q \in \partial \strip$ its two endpoints, and call $B_r$ the ball whose boundary contains $\alpha$. In this step we are going to show that one of the two points $p$ and $q$ belongs to a lateral side (that is, $\sigma(0)$ or $\sigma(L)$), and the other point belongs either to the ``upper side'' or to the ``lower side'' --that is, $\Psi\big((0,L)\times \{1\}\big)$ and $\Psi\big((0,L)\times \{-1\}\big)$. To prove this claim, we have to exclude the following possibilities.\\
$\bullet$ Both $p$ and $q$ belong to $\sigma(0)$.\\
This is impossible. Indeed, recall that by Proposition~\ref{prop:CheegerGenProp}~(vii) the arc $\alpha$ meets $\partial\strip$ tangentially. Then, since $\sigma(0)$ is a segment, there cannot be a circle meeting twice $\sigma(0)$ in a tangential way. The same argument excludes also that both $p$ and $q$ belong to $\sigma(L)$.\\
$\bullet$ Both $p$ and $q$ belong to the upper side.\\
This is impossible. Indeed, as already noticed the arc $\alpha$ meets $\partial\strip$ tangentially; hence, if $p$ belongs to the upper side, say $p=\Psi(t,1)$, this implies that the center of $B_r$ belongs to the line orthogonal to $\partial\strip$ in $p$, so it must be in the segment $\sigma(t)$; here we just need to use that $r<2$ and that the arc $\alpha$ belongs to $\strip$. Now, if also $q$ belongs to the upper side, say $q=\Psi(t',1)$, for the same reason we discover that the center of $B_r$ belongs to the segment $\sigma(t')$. But since $\sigma(t)\cap\sigma(t')=\emptyset$, because all the segments are disjoint and $t\neq t'$ because $p$ and $q$ are distinct, this gives a contradiction. The same argument excludes also that both $p$ and $q$ belong to the lower side.\\
$\bullet$ $p$ belongs to the upper side, and $q$ to the lower one.\\
This is impossible. Indeed, the same argument as above implies that $p=\Psi(t,1)$ and $q=\Psi(t,-1)$; but then the distance between $p$ and $q$ would be $2$, while any two points in $\partial B_r$ have distance at most $2r<2$.\\
$\bullet$ $p\in\sigma(0)$ and $q\in\sigma(L)$.\\
This is impossible. Indeed, if the two endpoints of $\alpha$ belong to $\sigma(0)$ and $\sigma(L)$, then by continuity every segment $\sigma(t)$, for $0<t<L$, would intersect $\alpha$. But then every point of $\strip$ would have distance less than $2+r<3$ from the center of $B_r$, so the whole strip $\strip$ would be contained in the ball of radius $3$ having the same center as $B_r$. This would imply that $2L=|\strip| < 9\pi$, which is in turn ruled out by the assumption on $L$. The step is then concluded.

\step{III}{The whole ball $B_r$ is contained in $\strip$.}
In this last step we conclude the proof of the lemma. Thanks to Step~II, we can assume without loss of generality that $p\in\sigma(0)$ while $q$ belongs to the lower side. Hence, $q=\Psi(t,-1)$ for some $0<t<L$, and thus the center of $B_r$ is the point $\Psi(t,-1+r)$. Assuming by contradiction that $B_r$ is not contained in $\strip$, there must be some point $s\in B_r\cap \partial \strip$. The arguments of Step~II already ensure that $s$ cannot be contained in $\sigma(0)$, nor in $\sigma(L)$, so we must exclude that $s$ belongs to the upper or to the lower side. But in fact, if $s$ belongs to the upper side or to the lower side, then $s$ has distance less than $r$ from $\Psi(t,-1+r)$, thus less than $1$ from $\Psi(t,0)$, which is against the definition of strip. The proof is then concluded.
\end{proof}

The second lemma establishes a \emph{ball-to-ball} connectivity property of a generic strip, that is, the possibility of connecting two balls of radius $r$ that are contained in $\strip$ by rolling one of them towards the other, following a suitable path of centers with controlled curvature and preserving the inclusion in $\strip$ (see Figure~\ref{fig:ball2ball}).
\begin{figure}[ht]
\centering
\includegraphics[scale=1]{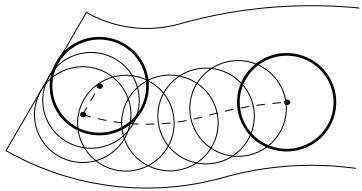}
\caption{The ball-to-ball property of a strip.}
\label{fig:ball2ball}
\end{figure}

\begin{lemma}[Ball-to-ball property]\label{lemma1.2}
If $B_r(x_{0})$ and $B_r(x_{1})$ are two balls of radius $r\leq 1$, both contained in a strip $\strip$, then there exists a piece-wise ${\rm C}^{1,1}$ curve $\beta:[0,1]\to \strip$ such that $\beta(0) = x_{0}$, $\beta(1) = x_{1}$, the curvature of $\beta$ is smaller than $r^{-1}$, and $B_{r}(\beta(t))\subseteq\strip$ for all $t\in (0,1)$.
\end{lemma}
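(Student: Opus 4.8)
The plan is to pass to the coordinates $(t,\rho)\in(0,L)\times(-1,1)$ given by the diffeomorphism $\Psi$ and to reduce the statement to a connectedness property of the open set of \emph{admissible centres} $V:=\{x\in\strip:\ B_r(x)\subseteq\strip\}$, which contains both $x_0$ and $x_1$. As a first step I would describe $V$: writing $U^{\pm}:=\Psi\big((0,L)\times\{\pm1\}\big)$ for the two long sides of $\de\strip$, one has $\de\strip=\overline{\sigma(0)}\cup\overline{\sigma(L)}\cup U^{+}\cup U^{-}$, and moreover
\[
B_r(\Psi(t,\rho))\subseteq\strip\quad\Longleftrightarrow\quad |\rho|\le 1-r\ \text{ and }\ \dist\big(\Psi(t,\rho),\overline{\sigma(0)}\cup\overline{\sigma(L)}\big)\ge r\,.
\]
The non-obvious implication follows because a shortest segment from an interior point to $\de\strip$ lies in $\overline{\strip}$; if its foot falls in the relative interior of $U^{+}$ (resp.\ $U^{-}$) it is orthogonal to it there, and since $\tfrac{d}{ds}\Psi(s,\pm1)=(1\mp\kappa(s))\gp(s)\parallel\gp(s)$ the normal line to $U^{\pm}$ at $\Psi(s,\pm1)$ is exactly the line carrying $\sigma(s)$, so by injectivity of $\Psi$ the foot must be $\Psi(t,\pm1)$, at distance $1\mp\rho$; otherwise the foot lies on $\overline{\sigma(0)}\cup\overline{\sigma(L)}$. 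In particular, whenever $\Psi(t,\rho)$ is at distance $>2$ from both caps, $\dist(\Psi(t,\rho),\de\strip)=1-|\rho|$, so $B_r(\Psi(t,\rho))\subseteq\strip$ as soon as $|\rho|\le 1-r$.

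I would then fix the two families of curves out of which $\beta$ is built. For a level $\bar\rho$ with $|\bar\rho|\le 1-r$, the \emph{level curve} $c_{\bar\rho}\colon t\mapsto\Psi(t,\bar\rho)$ has velocity $(1-\bar\rho\kappa(t))\gp(t)$ and geometric curvature $\kappa(t)/(1-\bar\rho\kappa(t))$, of modulus $\le 1/(1-|\bar\rho|)\le 1/r$ since $|\kappa|\le 1$; the fibre pieces $\rho\mapsto\Psi(t,\rho)$ are straight. Thus a finite concatenation of level-curve arcs and fibre pieces lying in $V$ is an admissible competitor. The construction of $\beta$ (assuming $t_0\le t_1$, $x_i=\Psi(t_i,\rho_i)$) then goes through the spinal curve: slide $x_0$ to $\Psi(t_0,0)$ along $\sigma(t_0)$, run $c_0$ from $t_0$ to $t_1$, slide $\Psi(t_1,0)$ to $x_1$ along $\sigma(t_1)$. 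When all the points involved stay at distance $>2$ from both caps this works verbatim by the last remark; in general one first has to ``make room'' near the two ends.

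This last reduction is the real content, and I expect it to be the main obstacle. The tool is the monotonicity of the distance to a cap along a level curve: while the foot of the perpendicular from $\Psi(t,\rho_0)$ onto the line carrying $\sigma(0)$ lies in $\overline{\sigma(0)}$ — which, because $\gamma$ is arc-length parametrised with $|\kappa|\le 1$ and $|\rho_0|\le 1-r$, holds on a fixed-size interval beyond the cap — that distance equals $\big\langle\gamma(t)-\gamma(0)+\rho_0\nu(t),\gp(0)\big\rangle$, which vanishes at $t=0$ and has $t$-derivative $(1-\rho_0\kappa(t))\cos(\theta(t)-\theta(0))>0$ (with $\theta$ the tangent angle of $\gamma$), and a similar monotonicity persists beyond that interval, where the nearest point becomes a corner; symmetrically for $\sigma(L)$. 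Hence from $x_0$ one may slide along $c_{\rho_0}$ away from the nearer cap, staying in $V$, until the constraint from that cap is no longer binding, and likewise at the other end, after which the previous construction applies. The genuinely delicate verifications are (a) that this sliding actually brings one into the region where both caps are more than $2$ away — which needs quantitative control, via the curvature bound and arc-length parametrisation of $\gamma$, both of how quickly the perpendicular foot can leave $\overline{\sigma(0)}$ and of how close $\overline{\sigma(L)}$ can come during the slide — and (b) the degenerate case of a strip too short to contain such a ``bulk'' point, where one argues locally dealing with both caps simultaneously. This is the only point at which the geometry of $\strip$, and not merely soft topology, is used.
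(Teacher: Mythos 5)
Your reduction to the connectedness of the set $V$ of admissible centres, and the use of level curves (with curvature $\le 1/(1-|\bar\rho|)\le 1/r$) and fibre segments as building blocks, matches the skeleton of the paper's argument. The gap is exactly at the point you flag as delicate, and the tool you propose there does not close it. Your computation gives $\tfrac{d}{dt}\big\langle\gamma(t)-\gamma(0)+\rho_0\nu(t),\gp(0)\big\rangle=(1-\rho_0\kappa(t))\cos(\theta(t)-\theta(0))$, which is positive only while $|\theta(t)-\theta(0)|<\pi/2$, i.e.\ (since $|\kappa|\le 1$) for $t<\pi/2$; the assertion that ``a similar monotonicity persists beyond that interval, where the nearest point becomes a corner'' is precisely the non-trivial global statement, and nothing in the local computation supports it: a long strip could a priori wind back so that $\overline{\sigma(0)}$ re-enters the $r$-neighbourhood of the level curve $c_{\rho_0}$ (or of the spinal curve) at a much later parameter $t$, in which case both your ``make room'' slide and your ``run $c_0$ from $t_0$ to $t_1$'' step would leave $V$. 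What is actually needed --- and what the paper proves in Step~I of its proof --- is that for each fixed level $\rho$ the set $J=\{t:\ B_r(\Psi(t,\rho))\subseteq\strip\}$ is an interval; the proof there is not a monotonicity computation but a global argument: if two boundary points of $J$ both corresponded to tangency with $\sigma(0)$, some intermediate $\sigma(\bar t)$ would be orthogonal to $\sigma(0)$, and the disjoint segments $\sigma(t)$ for $t$ between $0$ and $\bar t$ would then sweep more than half of the second ball and hence contain its centre, contradicting the disjointness of the segments. Your proposal contains no substitute for this step.

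A second, smaller issue: your final connection runs through the ``bulk'' (points at distance greater than $2$ from both caps), which need not exist for short strips; you acknowledge this as case (b) but leave it unresolved. The paper sidesteps it by connecting the two balls at the cap rather than in the bulk: after sliding each ball at constant $\rho$ until it is tangent to $\sigma(0)$ (legitimate by the interval property of Step~I), it shows in its Step~II --- again by a non-trivial topological argument --- that the convex hull of two balls contained in $\strip$ and both tangent to $\sigma(0)$ is itself contained in $\strip$, so the two can be joined by a straight translation parallel to $\sigma(0)$. This works for every admissible strip and replaces both your spinal-curve traverse and your unresolved case (b).
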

\begin{proof}
We split again the proof into three steps. 

\step{I}{Centers of balls in $\strip$ with fixed $\rho$ and $r$ are projected onto arcs of the spinal curve.}
Let us fix $r\leq 1$ and $\rho\in (-1,1)$, and call for brevity $x_t=\Psi(t,\rho)$ for $0<t<L$. The goal of this step is to show that the set
\[
J:=\big\{t \in (0,L):\, B_r(x_t)\subseteq \strip \big\}
\]
is a closed interval. To begin, we observe that $J$ is clearly closed; moreover, it is admissible to assume $|\rho|\leq 1-r$, since otherwise $J$ is empty and there is nothing to prove. We observe now that, since $|\rho|\leq 1-r$, then every ball $B_r(x_t)$ has an empty intersection with the upper side $\Psi\big( (0,L)\times\{1\}\big)$, as well as with the lower side $\Psi\big( (0,L)\times\{-1\}\big)$. Therefore, by continuity, for each $t\in \partial J$ the boundary of the ball $B_r(x_t)$ must necessarily be tangent to either $\sigma(0)$ or $\sigma(L)$ (or both). We claim now that there cannot be two distinct $t_1<t_2$ in $\partial J$ such that the balls $B_r(x_{t_1})$ and $B_r(x_{t_2})$ are both tangent to $\sigma(0)$ (by symmetry, the same will be true for $\sigma(L)$): since for sure $t \notin J$ when $t$ is too close to $0$ or $L$, the fact that $J$ is a closed segment will follow at once as soon as we show this claim.\par

Assume then the existence of $t_1<t_2$ against the claim. Then, the segment connecting $x_{t_1}$ and $x_{t_2}$ is parallel to $\sigma(0)$, and as a consequence there must be some $\bar t\in (t_1,t_2)$ such that the direction $\gamma'(\bar t)$ of the curve $\gamma$ at $\gamma(\bar t)$ is parallel to $\sigma(0)$. In other words, the segment $\sigma(\bar t)$ is orthogonal to the segment $\sigma(0)$. Now, observe that all segments $\sigma(t)$ are disjoint, both endpoints of any $\sigma(t)$ are by definition outside the ball $B_r(x_{t_2})$, and $\sigma(0)$ is tangent to $B_r(x_{t_2})$. Then, an immediate geometric argument implies that the segments $\sigma(t)$, for $t$ varying between $0$ and $\bar t$, sweep strictly more than one half of the ball $B_r(x_{t_2})$. As a consequence, there exists some $t'\in [0,\bar t]$ such that $\sigma(t')$ contains the point $x_{t_2}$. And finally, this is impossible, because $x_{t_2}$ only belongs to $\sigma(t_2)$ and $t_2> \bar t \geq t'$.

\step{II}{The case of two balls tangent to $\sigma(0)$.}
Let $x,\, y\in \strip$ be two points such that both balls $B_r(x)$ and $B_r(y)$ are contained in $\strip$ and tangent to $\sigma(0)$. We claim that the (open) convex envelope $K$ of the two balls $B_r(x)$ and $B_r(y)$ is entirely contained in $\strip$. We prove the claim by contradiction, assuming that $\partial\strip\cap K$ is not empty. By construction, it is clear that $\sigma(0)$ does not intersect $K$; moreover, since $\sigma(L)$ does not intersect $B_r(x)$ nor $B_r(y)$, then it is impossible that $\partial \strip\cap K$ consists only of points of $\sigma(L)$. As a consequence, there must be points of $\partial \strip\cap\partial K$ which are not in $\sigma(0)\cup \sigma(L)$, hence which are either in the upper side $\partial^+\strip$ or in the lower side $\partial^-\strip$. Since we can assume that both endpoints of $\sigma(0)$ are a strictly positive distance apart from $K$ (because otherwise the claim is immediate), there is a strictly positive $t_0<L$ such that one of the endpoints of $\sigma(t_0)$ belongs to $\partial K\setminus \sigma(0)$, and $t_0$ is the smallest number for which this happens. Without loss of generality, let us assume that this endpoint is the upper one, that is, $\Psi(t_0,1)\in \partial K$. Notice that this point does not belong to $\sigma(0)$, nor to the boundaries of $B_r(x)$ and $B_r(y)$, hence it belongs to the segment of $\partial K$ which is parallel to $\sigma(0)$ but not intersecting $\sigma(0)$.\par

Let us now call $K^+$ the biggest bounded set in $\R^2$ whose boundary is contained in the union of $\partial K$, $\sigma(0)$, and the curve $t\mapsto \Psi(t,1)$ with $0<t<t_0$. Observe that $K^+\supsetneq K$, and that the whole curve $t\mapsto \Psi(t,1)$ with $0<t<t_0$ is part of $\partial K^+$; observe also that the curve $t\mapsto \Psi(t,-1)$ for $0\leq t\leq t_0$ does not intersect $K^+$, by the minimality of $t_0$. Notice now that the direction of the segment $\sigma(t_0)$ has a strictly negative component in the direction of $\sigma(0)$: this comes again by the minimality of $t_0$, since the curve $t\mapsto \Psi(t,1)$ must enter in the stadium $K$ from the straight side which is not contained in $\sigma(0)$. As a consequence, the segment $\sigma(t_0)$ must lie entirely inside $K^+$: indeed, in the point $\Psi(t_0,1)$ the segment is pointing inside $K^+$ by construction, and the argument above about the direction implies that it could exit from $K^+$ only either at a point of $\sigma(0)$ --and this is impossible because $\sigma(t_0)$ and $\sigma(0)$ do not intersect-- or at a point of $\partial^+\strip \cap \partial K^+$ --and this is impossible because $\sigma(t_0)$ and $\partial^+\strip$ cannot intersect. Summarizing, we have proved that the whole segment $\sigma(t_0)$ is inside $K^+$, hence in particular $\Psi(t_0,-1)$ is $K^+$; and finally, this gives the required contradiction because we proved above that $t\mapsto \Psi(t,-1)$ for $0\leq t\leq t_0$ does not intersect $K^+$.

\step{III}{Conclusion.}
The conclusion now easily follows from steps~I and~II. Given any two balls of same radius $r\leq 1$ contained inside the strip, by Step~I we can move both with constant distance from $\partial^+\strip$ until they become tangent to $\sigma(0)$; then, we can connect these two balls parallel to $\sigma(0)$ thanks to Step~II. The corresponding curve $\beta$ is clearly made by three ${\rm C}^{1,1}$ pieces, and by construction each piece has curvature smaller than $r^{-1}$.
\end{proof}

With these two lemmas at hand, we can finally prove Theorem~\ref{teo:unionedipalle2} and conclude this section.

\begin{proof}[Proof of Theorem~\ref{teo:unionedipalle2}]
For simplicity, we divide the proof in some steps.
\step{I}{The functions $\rho^\pm$ such that~(\ref{intergrafico}) holds.}
Let us start by defining the functions $\rho^\pm$. First of all, Lemma~\ref{lemma1.1} ensures the existence of arcs of circle contained in $\partial E\cap\strip$, each one associated with a corner of $\strip$ in the sense of Step~II of the proof of Lemma~\ref{lemma1.1}. As we already pointed out in that proof, each of the four corners of $\strip$ must be ruled out of $E$, and this means that there are at least four arcs in $\partial E\cap \strip$. We want to show that they are actually exactly four, or in other words that the four corners of $\strip$ are in a one-to-one correspondence with the connected components of $\partial E\cap \strip$. To do so, let $\alpha$ be such an arc, and let $B_r(x)$ be the ball which contains $\alpha$ as a part of its boundary: by Step~II of the proof of Lemma~\ref{lemma1.1}, we know that exactly one endpoint of $\alpha$ belongs either to the upper side $\partial^+\strip$ or to the lower side $\partial^-\strip$. If an endpoint of $\alpha$ belongs to $\partial^+\strip$, then we know that the center $x$ of $B_r(x)$ must be $x=\Psi(t,1-r)$ for some $t\in (0,L)$; conversely, if an endpoint of $\alpha$ belongs to $\partial^-\strip$, then $x=\Psi(t,-1+r)$ for some $t\in (0,L)$. However, Step~I of the proof of Lemma~\ref{lemma1.2} implies that there is exactly a single $x$ of the form $x=\Psi(t,1-r)$ such that the ball $B_r(x)$ is tangent to $\sigma(0)$, and exactly another one such that the ball is tangent to $\sigma(L)$. This ensures that there is exactly a single arc ruling out each of the four corners of $\strip$.\par

As a consequence, we know that $\partial E\cap \strip$ is made by four arcs of circle $A^+_l$, $A^+_r$, $A^-_r$ and $A^-_l$, connecting respectively the left side with the upper side, the upper with the right, the right with the bottom, and the bottom with the left. Therefore, $\partial E$ is the union of these four arcs, plus two segments $\Sigma_l$ and $\Sigma_r$ respectively in $\sigma(0)$ and $\sigma(L)$, plus two curves $\Gamma^+$ and $\Gamma^-$ respectively in $\partial^+\strip$ and $\partial^-\strip$. We can define the ``upper boundary'' $\partial^+E$ of $E$, and the ``lower boundary'' $\partial^- E$ of $E$ as
\begin{align*}
\partial^+E:= A^+_l \cup \Gamma^+\cup A^+_r\,, && \partial^-E:= A^-_l \cup \Gamma^-\cup A^-_r\,.
\end{align*}
We now claim that for every $0<t<L$ there is exactly one number $\rho^+(t)\in [-1,1]$, and exactly one number $\rho^-(t)\in [-1,1]$, such that $\Psi\big(t,\rho^\pm(t)\big)\in \partial^\pm E$. Indeed, otherwise there should be a segment $\sigma(t)$ which intersects twice one of the four arcs, say $A^+_l$, and then by continuity there should be another segment $\sigma(t')$ which is tangent to the arc $A^+_l$: this is impossible, because then the whold segment $\sigma(t')$ would be in the upper-left connected component of $\strip\setminus E$, and this is absurd because $\Psi(t',-1)$ cannot be in that component. It readily follows that the functions $\rho^\pm$ are continuous, and that they fulfill the property~(\ref{intergrafico}); the fact that $E$ is simply connected is then immediate. Moreover, the construction ensures that the four arcs $A^\pm_{l,\,r}$ only depend on $\strip$, not on $E$, and thus there is actually a unique Cheeger set.

\step{II}{$E$ is the union of balls of radius $r$ contained in $\strip$ and $E=E_r+B_r$.}
Let us now call $U$ the union of all balls of radius $r$ contained in $\strip$, and let us show that $E = U$. First of all, let $B_r$ be any ball of radius $r$ contained in $E$ (we know that such a ball exists, for instance any of the balls containing one of the four arcs $A^\pm_{l,\,r}$ have this property by Lemma \ref{lemma1.1}). Then let $B'_r$ be any other ball, still of radius $r$, contained in $\strip$. Lemma~\ref{lemma1.2} ensures that we can roll the ball $B_r$ in $\strip$ until it covers $B'_r$. Then, the ``rolling ball'' Lemma~\ref{lemma:movingball} implies that all the intermediate balls, and in particular $B'_r$, are contained in $E$; notice that we can apply Lemma~\ref{lemma:movingball} because $E$, being the unique Cheeger set as shown in the previous step, is in particular the maximal one. Hence, we have proved that $U\subseteq E$.\par

Conversely, notice that the assumption on $L$ gives, thanks to Theorem~\ref{teo:KrePra}, that $h(\strip)<2$, thus $r>1/2$. Hence, every point of $\strip$ has distance less than $2r$ from $\partial\strip$, and so every point of $E$ has distance less than $2r$ from $\partial E$. Therefore, to check that $E\subseteq U$, it is  sufficient to prove that, for each point $p\in \partial E$, the ball with radius $r$ tangent to $\partial E$ in $p$ (from the same side of $E$) is entirely contained in $\strip$.\par

If $p$ belongs to one of the four arcs of $\partial E$, this is ensured by Lemma~\ref{lemma1.1}, while if $p$ belongs to $\sigma(0)$ or $\sigma(L)$, this is an immediate consequence of Step~II of the proof of Lemma~\ref{lemma1.2}. Assume then by contradiction the existence of a point $p$ in $\partial^+\strip\cap \partial E$ such that the above-mentioned ball is not contained in $\strip$; by construction, this means that the circle of radius $r$ tangent in $p$ to $\partial^+\strip$ has two points which both belong to $\sigma(0)$ or both to $\sigma(L)$; since this happens exactly with one of the two segments, by the assumption on $L$, we assume that the segment is $\sigma(0)$. Since $p\in\partial E$, then by Step~I it is ``after'' the arc $A^+_l$: this means that, if we call $t'$ such that $p=\Psi(t',1)$, and $t''$ such that $\Psi(t'',1)$ is the extreme of the arc $A^+_l$ which belongs to $\partial^+\strip$, then $t'>t''$. Let us now ``roll'' the ball, that is, let us consider all the balls $B_t$ of radius $r$ which are tangent to $\partial^+\strip$ at points $\Psi(t,1)$ for $t>t'$: by continuity, there exists some $t'''$ such that this ball is tangent to $\sigma(0)$, before reaching the arc $A^+_r$. By construction, this ball is inside $\strip$ and tangent to $\partial\strip$ both on $\partial^+\strip$ and on $\sigma(0)$; moreover, since $t'''>t'>t''$, this ball does not coincide with $B_{t''}$ (which is the ball containing the arc $A^+_l$ in its boundary). Hence, there are two distinct balls, namely $B_{t''}$ and $B_{t'''}$, which are contained in $\strip$ and tangent to $\partial\strip$ both at the left and at the upper side. To conclude our claim, then, we need to show that this is impossible. We can now apply Step~II of Lemma~\ref{lemma1.2}, since $B_{t'''}$ is by construction in the stadium $K$: since $B_{t'''}$ is tangent to $\partial^+\strip$ at some point, but $\partial^+\strip$ cannot enter in the interior of $K$, we deduce that the point of tangency is the opposite point in $B_{t'''}$ to the point of tangency with $\sigma(0)$. This means that the segment $\sigma(t''')$ is orthogonal to $\sigma(0)$, and since the distance between the two opposite points is $2r<2$ this implies that $\sigma(0)$ and $\sigma(t''')$ intersect each other, which is the desired contradiction. Summarizing, we have proved that for every point $p\in\partial E$ the ball of radius $r$ tangent to $\partial E$ in $p$ is entirely contained in $E$, and as said above this implies that $E\subseteq U$, hence finally the equality $E=U$ is established. Moreover, since by definition $U=E_r+B_r$, the fact that $E=E_r+B_r$ follows.

\step{III}{$E_r$ has Lipschitz boundary and positive reach $\reach(E_r)\geq r$.}
In this step we consider the set $E_r$, and we show that it has Lipschitz boundary and positive reach $\reach(E_r)\geq r$. First of all, we can find a more or less explicit formula to describe the set $E_r$. More precisely, let $x=\Psi(t,\rho)$ be a point of $\strip$. The point $x$ surely does not belong to $E_r$ if $|\rho|>1-r$; on the other hand, if $|\rho|\leq 1-r$, then $x$ belongs to $E_r$ unless it has distance less than $r$ from one of the two sides $\sigma(0)$ and $\sigma(L)$. Suppose then that a point $x=\Psi(t,\rho)$ has distance less than $r$ from $\sigma(0)$ but $|\rho|\leq 1-r$: this implies that the point of $\sigma(0)$ minimising the distance from $x$ is in the interior of $\sigma(0)$, because since $|\rho|\leq 1-r$ the point $x$ has distance greater than $r$ from the upper and lower side of $\partial\strip$. Let us then take the segment $S_0$ parallel to $\sigma(0)$ at a distance $r$, in the direction of $\strip$: $x$ does not belong to $E_r$ if it is between $\sigma(0)$ and $S_0$; similarly, the point $x=\Psi(t,\rho)$ with $|\rho|\leq 1-r$ does not belong to $E_r$ if it is between $\sigma(L)$ and $S_L$, the latter being the segment parallel to $\sigma(L)$ having distance $r$, in the direction of $\strip$. Notice that we can limit ourselves to rule out the points between $\sigma(0)$ (resp. $\sigma(L)$ and the segment $S_0$ (resp. $S_L$) with distance $r$ \emph{in the direction of $\strip$}: indeed, if $x$ has distance less than $r$ from $\sigma(0)$ but it is on the other side, then in particular it has also distance less than $r$ from $\sigma(L)$, and vice versa. Summarizing, also keeping in mind Step~I of the proof of Lemma~\ref{lemma1.2}, we can write
\[
\partial E_r = \Gamma^+ \cup \Gamma^- \cup S_l \cup S_r\,,
\]
where $\Gamma^+$ and $\Gamma^-$ are two arcs, respectively contained in the curve $t\mapsto \Psi(t,1-r)$ and $t\mapsto \Psi(t,-1+r)$, while $S_l$ and $S_r$ are two segments, respectively contained in the segments $S_0$ and $S_L$ defined above. Thanks to the properties of $\Psi$, we have then already that $E_r$ has Lipschitz boundary.\par

We must now check that the reach of $E_r$ is at least $r$; in other words, since $E=E_r + B_r$, for every point $x$ of $E\setminus E_r$ we have to check that there is a single point of minimal distance from $x$ in $E_r$. To do so, it is convenient to subdivide $E\setminus E_r$ in eight regions, as Figure~\ref{fig:stripdecomp} shows. The region $R_+$ is made by all points $\Psi(t,\rho)$ with $\rho>1-r$, and being $t\in (0,L)$ such that $\Psi(t,1-r)\in \Gamma^+$. Analogously, the generic point of $R_-$ is $\Psi(t,\rho)$ with $\rho<-1+r$ if $\Psi(t,-1+r)\in \Gamma^-$. The regions $D^\pm_{l,\,r}$ are the four circular sectors corresponding to the arcs $A^\pm_{l,\,r}$. And finally, the regions $Q_l$ and $Q_r$ are two rectangles, having as parallel sides the segment $S_l$ and the parallel subsegment of $\sigma(0)$, and the segment $S_r$ and the parallel subsegment of $\sigma(L)$ respectively. The fact that this is actually a subdivision of $E\setminus E_r$ comes readily from the construction and from the two key Lemmas~\ref{lemma1.1} and~\ref{lemma1.2}.\par
\begin{figure}[ht]
\centering
\includegraphics[scale=1]{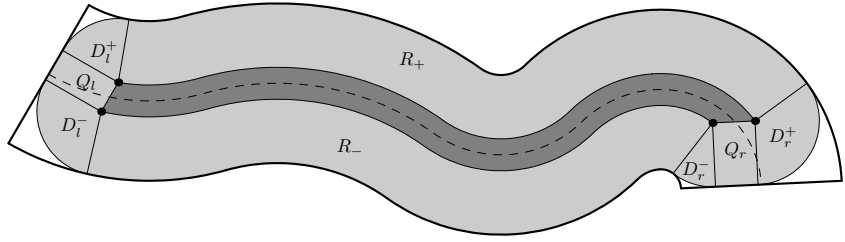}
\caption{The decomposition of the Cheeger set. The inner Cheeger set $E_{r}$ is colored in dark grey. One can see the eight regions of the decomposition of $E\setminus E_{r}$ colored in light grey.}
\label{fig:stripdecomp}
\end{figure}
Let us then take a generic point $x\in E\setminus E_r$, and let us show that it admits a unique point of minimal distance in $\partial E_r$. If $x\in R_+$, then $x=\Psi(t,\rho)$ for some $\rho>1-r$. Since any point $x'=\Psi(t',\rho')\in \partial E_r$ must satisfy $\rho'\geq 1-r$, then $|x-x'|\geq \rho-(1-r)$ for every $x'\in \partial E_r$, with strict inequality if $t'\neq t$ by the definition of strip; on the other hand, by definition $y=\Psi(t,1-r)$ belongs to $\partial E_r$, and $|x-y|=\rho-(1-r)$, so $y$ is the unique point of minimal distance in $\partial E_r$ from $x$. The same argument of course works if $x\in R_-$.\par
Assume now that $x\in Q_l$ (the fully analogous argument will work for $Q_r$). By construction, every point of $\partial E_r$ has distance greater than $r$ from $\sigma(0)$, with equality only for points of $S_l$. Thus, every point of the rectangle $Q_l$ has as unique point of minimal distance from $\partial E_r$ its orthogonal projection on $S_l$.\par
Finally, let $x$ be in the upper-left circular sector $D^+_l$ (and as usual, the analogous argument will work for the other three sectors). Then, by construction and immediate geometric considerations, it is clear that the center of the sector, which belongs to $\partial E_r$, is the unique point of minimal distance from $x$ (here we use again that $\Gamma^+$ is an arc, by Step~I of Lemma~\ref{lemma1.2}). Hence, we have proved the uniqueness of the minimizer in every possible case, and this step is concluded.

\step{IV}{The inner Cheeger formula~(\ref{ICformula}) holds true.}
Thanks to Step~III, we can apply Proposition~\ref{steinerformulas} to the set $A=E_r$, with $\rho=r$. Since then $A^r=A+B_r=E_r+B_r=E$, Steiner's formulae~(\ref{Asteiner}) and~(\ref{Psteiner}) read as
\begin{align*}
|E| = |E_r| + r P(E_r) + \pi r^2\,, && P(E)=P(E_r)+2\pi r\,.
\end{align*}
Recalling that $E$ is a Cheeger set, hence $P(E)/|E|=h(\strip)=1/r$, we deduce
\[
r = \frac{|E|}{P(E)} = \frac{|E_r| + r P(E_r) + \pi r^2}{P(E_r)+2\pi r}\,,
\]
from which one readily derives $|E_r|=\pi r^2$, that is, the validity of~(\ref{ICformula}) is established and the proof is concluded.
\end{proof}

\section{Some planar examples\label{sect:examples}}

This last section is devoted to collect some examples of non-convex planar domains, whose Cheeger sets have particular properties; basically, for most of the standard properties of the convex planar domains, that we have generalized for the case of the strips, we show non-convex domains which are not strips, and for which these properties are not valid.

The first example is a domain $G$, whose Cheeger set is strictly contained in the union of balls of radius $r = h(G)^{-1}$ that are contained in $G$. 
\begin{example}[\cite{KawLac2006}]\label{ex:cheegerinunion}\rm 
Let $G$ be the union of two disjoint balls $B_{1}$ and $B_{\frac 23}$, of radii $1$ and $\frac 23$ respectively (see Figure~\ref{fig:cheegerinunion}). One has $\frac{P(G)}{|G|} = \frac{30}{13} >2$. It is not difficult to check that the Cheeger set $E$ of $G$ coincides with $B_{1}$, hence $h(G) = 2$. However, $G$ coincides with the union of all balls of radius $r = h(G)^{-1} = \frac 12$ contained in $G$, which is therefore strictly larger than $E$. 
\begin{figure}[ht]
  \centering
  \includegraphics[scale=1]{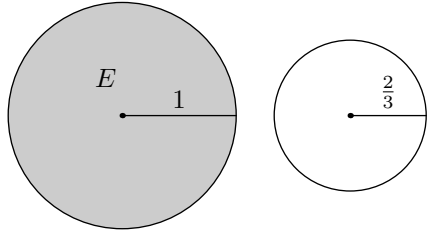}
  \caption{A union of two disjoint balls $B_{1}$ and $B_{\frac 23}$, whose Cheeger set $E$ coincides with the largest ball $B_{1}$.}
  \label{fig:cheegerinunion}
\end{figure}
\end{example}

The next example shows a Cheeger set $\bowtie$ \emph{strictly containing} the union of all balls of radius $h(\bowtie)^{-1}$ contained in $\bowtie$. This example and the one depicted in Figure~\ref{fig:cheegerinunion} show that, in general, no inclusion holds between a Cheeger set of $\Om$ and the union of all balls of radius $r = h(\Om)^{-1}$ contained in $\Om$.
\begin{example}[Bow-tie]\label{bow-tie}\rm
Let us consider a unit-side equilateral triangle $T$, as in Figure~\ref{fig:bowtie}, together with its Cheeger set $E_{T}$ (depicted in grey). Then, cut $T$ with the vertical line tangent to $E$ and reflect the portion on the left to the right, as shown in the picture. This produces a bow-tie $\bowtie$.
\begin{figure}[ht]
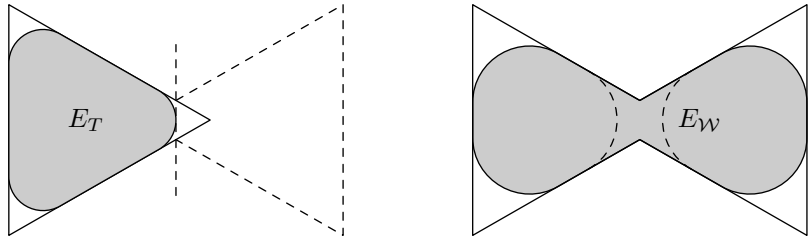

  \centering
  \includegraphics[scale=1]{cheegerfig-3}
  \qquad\qquad
  \includegraphics[scale=1]{cheegerfig-4}
  \caption{The construction of the bow-tie $\mathcal W$ (left) and the Cheeger set $E_{\mathcal W}$ in the bow-tie (right). Notice that the region between the two dashed lines in the picture on the right is the difference between the Cheeger set $E_{\mathcal W}$ and the (strictly smaller) union of all balls of radius $r$ included in $\mathcal W$.}
  \label{fig:bowtie}
\end{figure}
Let now $E_{\bowtie}$ be a Cheeger set inside $\bowtie$. By the $2$-symmetry of $\bowtie$ one can infer the $2$-symmetry of $E_{\bowtie}$. On the other hand, $E_{\bowtie}$ cannot have a connected component $F$ completely contained in $T$, since otherwise $F$ would be Cheeger inside $\bowtie$ and, at the same time, it would coincide with $E_{T}$. But then $E_{T}\cup E_{T}'$ (denoting by $E_{T}'$ the reflected copy of $E_{T}$ with respect to the cutting line) would be Cheeger in $\bowtie$, which is not possible since $\de (E_{T}\cup E_{T}')\cap \bowtie$ is not everywhere smooth, as it should be according to Proposition~\ref{prop:CheegerGenProp}. Being necessarily $\de E_{\bowtie}\cap \bowtie$ equal to a finite union of circular arcs, all with the same curvature $h(\bowtie)$, it is not difficult to rule out all possibilities except the one in which $\de E_{\bowtie} \cap \bowtie$ is composed by four congruent arcs, one for each concave corner in the boundary of $\bowtie$. Moreover one has the strict inequality $h(\bowtie)<h(T)$, therefore the union of all balls of radius $h(\bowtie)^{-1}$ contained in $\bowtie$ does not contain $E_{\bowtie}$ (indeed, some small region around the two concave corners cannot be covered by those balls).  
\end{example}

The next example is obtained as a slight variation of Example~\ref{bow-tie}. In this case, the resulting Cheeger set is simply connected, while the inner Cheeger set is disconnected. As a result, we derive the impossibility for the inner Cheeger formula~\eqref{ICformula} to hold.
\begin{example}[Loose bow-tie]\label{loose bow-tie}\rm
Take the bow-tie $\bowtie$ constructed in the previous example and vertically move the two concave corners a bit far apart. By the continuity of the Cheeger constant (see~\eqref{hc}) we infer the existence of some minimal vertical displacement of the two corners, such that the Cheeger set $\widetilde E$ in the modified bow-tie $\widetilde\bowtie$ actually coincides with the union of all balls of radius $r = h(\widetilde \bowtie)^{-1}$. This corresponds to the situation represented in Figure~\ref{fig:loosebowtie}. 
\begin{figure}[ht]
\centering
\includegraphics[scale=1]{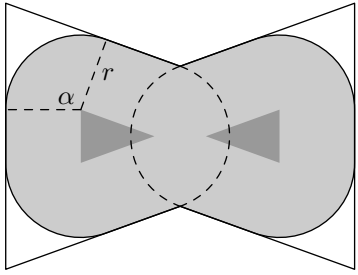}
\caption{A loose bow-tie for which the inner Cheeger formula does not hold.}
\label{fig:loosebowtie}
\end{figure}
It is then easy to check that the formula $|\widetilde E_{r}| = \pi r^{2}$ does not hold in this case, essentially because the inner Cheeger set $\widetilde E_{r}$ (depicted in dark grey) does not satisfy $\reach(\widetilde E_{r}) \geq r$. We also notice that, while the Cheeger set $\widetilde E$ is connected, the inner Cheeger set $\widetilde E_{r}$ is disconnected. Finally, one can easily check that the true formula, that is satisfied by the inner Cheeger set in this case, is
\[
|\widetilde E_{r}| = 2\alpha r^{2}>\pi r^{2}\,,
\]
where $\alpha$ is the angle depicted in Figure~\ref{fig:loosebowtie}.
\end{example}

Before getting to the last examples, we recall a result of generic uniqueness for the Cheeger set inside a domain $\Om\subseteq\R^{n}$, proved in~\cite{CasChaNov2010}:
\begin{teo}[\cite{CasChaNov2010}]\label{teo:genericunico}
Let $\Om\subseteq\R^{n}$ be any bounded open set, and let $\e>0$ be fixed. Then there exists an open set $\Om_{\e}\subseteq\Om$, such that $|\Om\setminus \Om_{\e}|<\e$ and the Cheeger set of $\Om_{\e}$ is unique.
\end{teo}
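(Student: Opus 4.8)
The plan is to bypass the difficulties of perturbing a general domain by replacing $\Om$ with a disjoint union of finitely many cubes of pairwise distinct sizes, thereby reducing the uniqueness question to the convex case handled by Theorem~\ref{teo:convC11unique}. This is essentially the argument of \cite{CasChaNov2010}.

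First I would cover $\Om$ efficiently by cubes. Since $\Om$ is open, the closed cubes contained in $\Om$ form a Vitali covering of $\Om$, so by the Vitali covering theorem there is a countable family $\{\overline{Q_k}\}_{k\in\N}$ of pairwise disjoint closed cubes, each contained in $\Om$, with $\big|\Om\setminus\bigcup_k\overline{Q_k}\big|=0$; during the selection one can clearly arrange that the side lengths $\ell_k$ are pairwise distinct (otherwise, shrink a few cubes concentrically by an arbitrarily small amount). Then I would fix $N$ with $\big|\Om\setminus\bigcup_{k=1}^N\overline{Q_k}\big|<\e$ and set $\Om_\e:=\bigcup_{k=1}^N Q_k$, which is open, contained in $\Om$, and has $|\Om\setminus\Om_\e|<\e$ since the $Q_k$ and the $\overline{Q_k}$ differ by a null set. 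Being a finite union of pairwise disjoint \emph{closed} cubes, the pieces $Q_1,\dots,Q_N$ lie at mutually positive distance, so $\Om_\e$ has exactly $N$ connected components.

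The core step is then to analyse the Cheeger problem on $\Om_\e$. Because the $\overline{Q_k}$ are pairwise disjoint, for any Borel $F\subseteq\Om_\e$ the characteristic function splits as $\chi_F=\sum_{k=1}^N\chi_{F\cap Q_k}$ with distributional gradients supported on disjoint sets, whence $P(F)=\sum_k P(F\cap Q_k)$ and $|F|=\sum_k|F\cap Q_k|$. A one-line weighted-average estimate then gives $h(\Om_\e)=\min_{1\le k\le N}h(Q_k)$, and shows that every Cheeger set $E$ of $\Om_\e$ meets each $Q_k$ either trivially or in a Cheeger set of $Q_k$, and must meet it trivially unless $h(Q_k)=h(\Om_\e)$. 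Now each $Q_k$ is a bounded convex domain, and by the scaling property of the Cheeger constant (Proposition~\ref{prop:CheegerGenProp}~(ii)) one has $h(Q_k)=c_n/\ell_k$ with $c_n:=h\big((0,1)^n\big)>0$; since the $\ell_k$ are pairwise distinct, $\min_k h(Q_k)$ is attained at a \emph{unique} index $k_0$ --- the cube with the largest side. Consequently any Cheeger set $E$ of $\Om_\e$ is contained in $Q_{k_0}$ and is Cheeger there, so by Theorem~\ref{teo:convC11unique} (uniqueness in convex domains) it equals the unique Cheeger set of $Q_{k_0}$; hence $\Om_\e$ has a unique Cheeger set, as desired.

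I expect the only point requiring genuine care to be the behaviour of the Cheeger problem on a \emph{disconnected} domain --- specifically the additivity $P(F)=\sum_k P(F\cap Q_k)$ and its corollary that no Cheeger set can charge two components with different Cheeger constants. This is precisely why the decomposition must use cubes with \emph{pairwise disjoint closures} (so that the components of $\Om_\e$ sit at positive distance), rather than, e.g., a Whitney decomposition whose cubes merely abut. If instead one demanded that $\Om_\e$ be \emph{connected}, the cube trick would fail and one would be pushed to the harder route of deforming $\Om$ along a one-parameter family $\{\Om_t\}$ of subdomains and proving that non-uniqueness can persist for at most countably many $t$; there the real obstacle is that any symmetry of $\Om$ may be passed along to the family (a symmetrically eroded dumbbell keeps two balanced Cheeger sets for a whole interval of parameters), so one must choose the deformation generically and argue that ``balanced'' configurations are unstable, which is most cleanly done through the total-variation/ROF description of Cheeger sets, where the countability of level-set plateaus is automatic.
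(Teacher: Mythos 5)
Your argument is correct as a proof of the literal statement, but it takes a genuinely different route from the paper. The paper's (sketched) proof keeps the bulk of $\Om$: it sets $\Om_{\e}=E\cup\om_{\e}$ where $E$ is a \emph{minimal Cheeger set of $\Om$} and $\om_{\e}$ is a smooth relatively compact subset, and then invokes the strong maximum principle for constant mean curvature hypersurfaces to show that $E$ itself is the unique Cheeger set of $\Om_{\e}$. You instead discard the geometry of $\Om$ entirely, replacing it by finitely many pairwise disjoint closed cubes of distinct side lengths; the additivity of perimeter over components at positive distance, the mediant inequality giving $h(\Om_{\e})=\min_k h(Q_k)$ with a unique minimizing cube, and Theorem~\ref{teo:convC11unique} then do all the work. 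Each step of yours checks out (the forced equality in the mediant inequality correctly shows a Cheeger set can only charge cubes realizing the minimum, hence only $Q_{k_0}$). What the two approaches buy is quite different: yours is more elementary and self-contained (no maximum principle, no regularity of the free boundary beyond the convex case), but the resulting $\Om_{\e}$ is badly disconnected, bears no topological resemblance to $\Om$, and its unique Cheeger set has nothing to do with the Cheeger sets of $\Om$. The paper's construction preserves connectedness when $\Om$ is connected and, more importantly, identifies the unique Cheeger set of $\Om_{\e}$ as an actual Cheeger set of the original domain, which is the form in which generic uniqueness is typically used (and is what the cited reference~\cite{CasChaNov2010} actually establishes). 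So your proof establishes the theorem as stated, but a weaker and less useful fact than the one the paper's argument is designed to deliver; your closing remarks about the connected case correctly diagnose why the cube trick cannot be upgraded.
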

\begin{proof}[Idea of proof]
Let $E$ be a minimal Cheeger set of $\Om$, and let $\om_{\e}$ be a relatively compact, open subset of $\Om$ with smooth boundary, such that $|\Om\setminus \om_{\e}|<\e$. Define $\Om_{\e} = E \cup \om_{\e}$, then by an application of the strong maximum principle for constant mean curvature hypersurfaces one can show that $E$ is the unique Cheeger set of $\Om_{\e}$.
\end{proof}
\begin{example}[\cite{KawLac2006}]\label{ex:duecheeger}\rm 
Figure~\ref{fig:duecheeger} shows a simply connected domain consisting of two congruent squares connected by a small strip. Both the left and the right square with suitably rounded corners are Cheeger sets, and their union is the maximal Cheeger set of the domain.  
\begin{figure}[ht]
  \centering
  \includegraphics[scale=1.4]{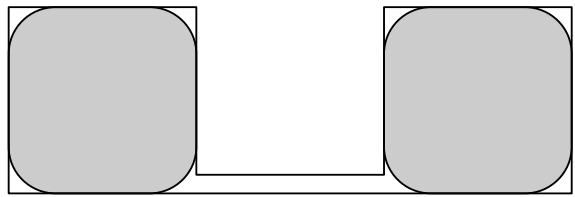}
  \caption{A simply connected domain whose Cheeger set is not unique.}
  \label{fig:duecheeger}
\end{figure}
\end{example}
A more sophisticated example of non-uniqueness, where the Cheeger sets are more than countably many, is constructed below; we point out that a similar example was numerically discussed by E. Parini in his master degree thesis~\cite{Parini_tesilaurea2006}.
\begin{example}[Pinocchio]\label{ex:pinocchio}
\rm 
Let $\pino_{\theta}$ be the union of a unit disc $B_{1}$ centered at $(0,0)$ and a disc of radius $r=\sin \theta$ and center $(\cos \theta,0)$, where $\theta\in(0,\pi/2)$ will be chosen later. The perimeter of $\pino_{\theta}$ is 
\[
P(\theta) = 2(\pi -\theta) + \pi \sin\theta, 
\]
while its area is
\[
A(\theta) = (\pi - \theta) + \sin\theta\, \cos\theta +\frac{\pi\sin^{2}\theta}{2}\,.
\]
\begin{figure}[ht]
\centering
\includegraphics[scale=1]{cheegerfig-9}
\caption{The set $\pino(\theta)$.}
\label{fig:pinocchio1}
\end{figure}
Given $0\leq \alpha \leq \pi/2 - \theta$ we define the subset $\pino(\theta,\alpha)$ of $\pino(\theta)$ as the union of $B_{1}$ and a disc of radius $r(\theta,\alpha) = \frac{\sin \theta}{\cos \alpha}$ and center $(\cos\theta - r(\theta,\alpha) \sin \alpha, 0)$. Clearly $\pino(\theta,\alpha)\subseteq\pino(\theta)$ for all $\alpha\in [0,\pi/2 - \theta]$. Moreover $\pino(\theta) = \pino(\theta,0)$ and $B_{1} = \pino(\theta,\pi/2-\theta)$. Notice also that the boundary of $\pino(\theta,\alpha)$ is made of two circular arcs meeting at the points $(\cos \theta, \pm \sin\theta)$. The perimeter of $\pino(\theta,\alpha)$ is 
\[
P(\theta,\alpha) = 2(\pi -\theta) + (\pi -2\alpha)\,\frac{\sin\theta}{\cos\alpha}, 
\]
while the area is
\[
A(\theta,\alpha) = (\pi - \theta) + \sin\theta\, \Big(\cos \theta - \sin\theta\,\tan\alpha \Big)
+\frac{\sin^2\theta}{\cos^2\alpha}\, \big(\pi/2 - \alpha\big)\,.
\]
Owing to Proposition \ref{prop:CheegerGenProp}(vii) and Lemma \ref{lemma:180}, it is not difficult to deduce that, for every $\theta\in (0,\pi/2)$, a Cheeger set in $\pino(\theta)$ must be of the form $\pino(\theta,\alpha)$ for some (in principle, not necessarily unique) $\alpha\in [0,\pi/2- \theta]$. Indeed, the boundary of any Cheeger set inside $\partial\pino(\theta)$ must be done by arcs of circle, which can only encounter $\partial \pino(\theta)$ tangentially, if the ``meeting point'' --that is, the point where an arc of circle internal to $\pino(\theta)$ reaches the boundary of $\pino(\theta)$-- is a point where $\partial\pino(\theta)$ has a tangent. This immediately ensures that no meeting point can exist, except possibly for the two corner points. But then, either $\pino(\theta)$ is Cheeger in itself, and then there is no meeting point, or the internal boundary of the Cheeger set must be an arc of circle connecting the two corner points. In other words, the possible Cheeger sets in $\pino(\theta)$ are only the sets $\pino(\theta,\alpha)$, as claimed.\par

Let us now look for the existence of $\theta_{0}\in (0,\pi/2)$ such that $\pino(\theta_0)$ is Cheeger in itself: a necessary but not sufficient condition is of course that
\[
\frac{P(\theta_{0},0)}{A(\theta_{0},0)} = \frac{1}{r(\theta,0)}= \frac{1}{\sin \theta_0}\,,
\]
that is, $\theta=\theta_0$ must be a solution of
\begin{equation}\label{thetavincolo}
2(\pi-\theta)\sin\theta +\frac{\pi}{2}\sin^{2}\theta - (\pi - \theta) - \sin\theta\cos\theta = 0
\end{equation}
We claim that there exists a unique $\theta_0\in (0,\pi/2)$ such that~\eqref{thetavincolo} is satisfied. Indeed, setting $g(\theta)$ equal to the left hand side of~\eqref{thetavincolo}, we have $g(0) = -\pi$ and $g(\pi/2) = \pi$, and for every $\theta\in(0,\pi/2)$ it holds
\[
g'(\theta) = 2(\pi-\theta)\cos\theta +\sin\theta\big(2\sin \theta +\pi \cos\theta -1\big)> 0\,,
\]
which proves our claim; a numerical approximation gives $\theta_{0}\simeq 0.531$. We set for brevity $\pino_{0} = \pino_{\theta_{0}}$, and we aim to prove that this set is Cheeger in itself (recall that the validity of the above equation was only a necessary, but not sufficient condition).\par

Assume then by contradiction that $\pino_0$ is not Cheeger in itself: as discussed above, this means that a Cheeger set in $\pino_0$ must be a set of the form $\pino(\theta_0,\alpha)$ for some $\alpha\in (0,\pi/2-\theta_0]$. We will now show that this is impossible.\par

To do so, we only have to check that $P\big(\pino(\theta_0,\alpha)\big)\sin\theta_0> A\big(\theta_0,\alpha\big)$ for every $\alpha\in (0,\pi/2-\theta_0]$. Writing down this inequality, and using~(\ref{thetavincolo}), we readily reduce to
\[
\frac \pi 2 \big( 1 -2\cos\alpha+\cos^2\alpha) < \alpha (1-2\cos\alpha) +\sin\alpha\cos\alpha\,,
\]
which in turn is a trigonometric inequality that can be elementary verified. We have thus proved that the set $\pino_0$ is Cheeger in itself.\par

\begin{figure}[ht]
\centering
\includegraphics[scale=1]{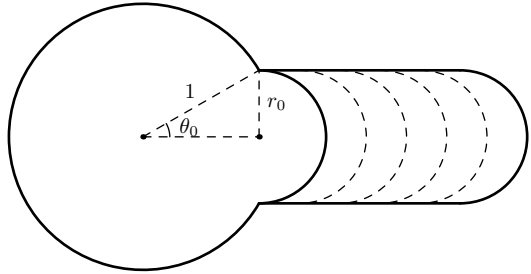}
\caption{The one-parameter family of Cheeger sets.}
\label{fig:pinocchio2}
\end{figure}
\end{example}
Let us then consider a one parameter family of sets $\pino_{t}$, $t\in [0,+\infty)$, obtained by ``elongating the nose'' of $\pino_{0}$ (see Figure~\ref{fig:pinocchio2}). More precisely, for $t\geq 0$ we define
\[
\pino_{t} = \pino(\theta_{0}) \cup \bigcup_{0<\tau<t} B_{r_0}(x_{\tau})\,,
\]
where $r_0=\sin\theta_0$ is the radius of the right half-ball in $\pino_0$, and $x_{\tau} = (\cos\theta_{0}+\tau,0)$. Since $\pino_t\supseteq \pino_0$ we have $h(\pino_t)\leq h(\pino_0)$, and we can immediately observe that
\begin{align}\label{eccoqua}
A(\pino_t)=A(\pino_0) + 2r_0 t\,, && P(\pino_t) = P(\pino_0) + 2t = \frac1{r_0} \,A(\pino_0) + 2t = \frac{A(\pino_t)}{r_0}\,.
\end{align}
Then, either $h(\pino_t)=h(\pino_0)$ and $\pino_t$ is Cheeger in itself, or $h(\pino_t)<h(\pino_0)$, and thus $\pino_t$ is not Cheeger in itself. We want to exclude this second possibility: indeed, if it were so, then a Cheeger set in $\pino_t$ should have some boundary in the interior of $\pino_t$, and this boundary should be made by arcs of circle with radius $1/h(\pino_t)>r_0$. But any such arc must necessarily start and end in the two corner points, because otherwise it should meet $\partial\pino_t$ tangentially and then it would be a complete circle, which is impossible because Lemma~\ref{lemma:180} ensures that any such arc must be at most half a circle. A Cheeger set in $\pino_t$ would then be contained in $\pino_0$, and this is impossible because $h(\pino_0)>h(\pino_t)$. The argument shows at once that $h(\pino_t)=h(\pino_0)$ for any $t>0$, and then that each $\pino_t$ is Cheeger in itself. However, from~(\ref{eccoqua}) we deduce that the Cheeger sets in $\pino_t$ are all the sets $\pino_\sigma$ for every $0\leq \sigma\leq t$. We have then found a set which admits a one-parameter family of Cheeger sets, as promised. Actually, owing to the properties of curved strips studied in Section~\ref{section:strip} (and in particular to Proposition~\ref{prop:stripAP}), the very same situation occurs even if one modifies the set $\pino_t$ by ``bending the nose''.

\begin{example}[A face with two stretched ears]\label{ex:ears}\rm 
We can push forward the construction of Example~\ref{ex:pinocchio} by adding a symmetric arc on the left of the unit disk, thus obtaining a ``face with two ears'' (see Figure~\ref{fig:2orecchie}). Let $\ears(\theta)$ denote this domain, where the geometric meaning of $\theta\in (0,\pi/2)$ is the same as in the previous example. By similar calculations as before, one finds
\[
P(\ears(\theta)) = 2(\pi - 2\theta) + 2\pi \sin\theta
\]
and
\[
A(\ears(\theta)= \pi - 2\theta + \sin(2\theta) + \pi \sin^{2}\theta\,.
\]
As before, one can check that there exists a unique solution $\theta_{1}$ of the equation
\[
\frac{P(\theta)}{A(\theta)} = \frac{1}{\sin\theta}
\]
in the interval $(0,\pi/2)$. Moreover, still arguing as in Example~\ref{ex:pinocchio}, one can prove that also $\ears(\theta_{1})$ is uniquely self-Cheeger. At this point, we can stretch independently the two ears producing a domain that contains a $2$-parameter family of Cheeger sets.\par
The interest of this example is not just having a two-parameter family of Cheeger sets instead of a one-parameter, but also that we have found a simply connected domain with Cheeger sets which have all non-empty intersection, but which are not all included into each other. In fact, as far as we know, in all the previous examples of domains with non-unique Cheeger sets, the different Cheeger sets either had empty intersection, as for instance in Example~\ref{ex:duecheeger} (taken from~\cite{KawLac2006}), or were all contained into each other, as for instance in Example~\ref{ex:pinocchio}.
\begin{figure}[ht]
\centering
\includegraphics[scale=1]{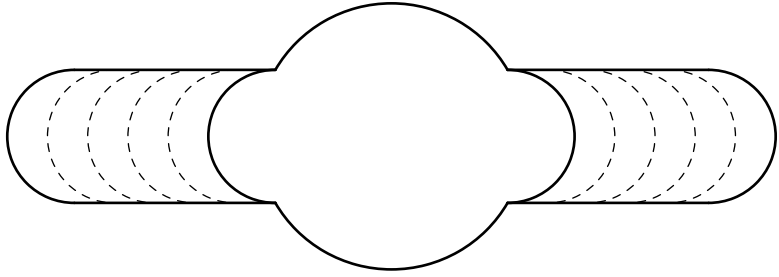}
\caption{A ``face with two stretched ears'', containing a two-parameter family of Cheeger sets. The minimal Cheeger set of the family corresponds to the white region (not foliated with half-circles) while the maximal one is the entire domain. A generic Cheeger set of the family is uniquely identified by independently choosing an arc in the foliation on the left and an arc in the foliation on the right.}
  \label{fig:2orecchie}
\end{figure}
\end{example}

\appendix

\section{Proof of property~(vii) in Proposition~\ref{prop:CheegerGenProp}}

We put here a proof of the property~(vii) of Proposition~\ref{prop:CheegerGenProp}, for the sake of completeness.

\begin{proof}
First we prove that, if we blow-up $E$ at $x\in \de^{*}\Om\cap \de E$, we obtain the same tangent half-space to $\Om$ at $x$. To this purpose, we need to show perimeter and volume density estimates for $E$ in $\Om$ at the point $x$. More precisely, we set $m(r) = |E\cap B_r(x)|$ and observe that, by our assumptions, $m(r)>0$ for all $r>0$. We now prove that 
\begin{equation}\label{densbelow1}
\liminf\limits_{r\to 0^{+}} \frac{m(r)}{r^{n}} >0\,.
\end{equation}
We choose a point $z\in \de^{*}E \cap \Om$ and, by standard estimates (see~\cite{MaggiBOOK}), we consider a one-parameter family of diffeomorphisms equal to the identity outside a small ball $B_\e(z)$, that allow us to produce any sufficiently small volume adjustment $\Delta V$ with a change in perimeter bounded by $c_{1}\Delta V$, for some constant $c_{1}>0$ depending only on $E$. Then for $r>0$ small enough we construct a competitor $F_{r}$ to $E$ such that $|F_{r}| = |E|$, $F_{r}\cap B_r(x) = \emptyset$, $F_{r} = E$ outside $B_r(x)\cup B_\e(z)$, and 
\[
P(F_{r}) \leq P(E\setminus B_r(x)) + c_{1}\, m(r)\,.
\]
On the other hand, we also have $P(E) \leq P(F_{r})$, thus for almost all $r>0$ we obtain
\begin{equation}\label{diffineq}
P(E;B_r(x)) \leq m'(r) + c_{1}\, m(r)\,.
\end{equation}
By the isoperimetric inequality~\eqref{isopRn} we find
\[
2m'(r) + c_{1}\, m(r) \geq n\om_{n}^{1/n} m(r)^{1-1/n}\,,
\]
thus for any $r>0$ small enough we obtain (hereafter $c$ denotes a small positive constant, possibly decreasing from line to line) 
\[
\frac{m'(r)}{m(r)^{1-1/n}} \geq c\,.
\]
By integrating between $\rho/2>0$ and $\rho$ we find 
\[
c\rho \leq (m(\rho)^{1/n} - m(\rho/2)^{1/n})\,,
\]
hence up to constants
\[
m(\rho) \geq c\rho^{n}
\]
for all $\rho>0$ small enough. This proves~\eqref{densbelow1}. By adjusting the volume of $E\setminus B_r(x)$ as before, we obtain a competitor $F_{r}$ that, owing to the minimality of $E$, allows us to show the existence of a positive constant $c_{2}>n\om_{n}$ such that for all $r>0$ small enough we have
\[
P(E;B_r(x)) \leq c_{2}r^{n-1}\,.
\]
Now we blow-up $\Om$ and $E$ at $x$. By our assumption on $x$, and thanks to Theorem~\ref{teo:degiorgi} (ii) and Proposition~\ref{prop:semicomp}, we get respectively a halfspace $x+H$ having $x$ on its boundary and, up to subsequences, a limit set $E_{\infty}$ contained in $x+H$ and with $x\in \de E_{\infty}$. One can show that $E_{\infty}$ is not empty and minimizes the perimeter without volume constraint and with respect to any compact variation $F$ contained in $x+H$. Since $H$ is convex, $E_{\infty}$ is also minimizing with respect to a generic compact variation in $\R^{n}$. By a maximum principle argument (see~\cite[Corollary~1]{Simon1987}) we get that $E_{\infty} = x+H$. This shows that $E$ admits the half-space $x+H$ as unique blow-up at $x$. We now prove that 
\begin{equation}\label{densitadisco}
\lim_{r\to 0}\frac{P(E;B_r(x))}{r^{n-1}} = \om_{n-1}\,.
\end{equation}
Indeed, we define $E_{r} = r^{-1}(E-x)$ and notice that $\chi_{E_{r}}\to \chi_{H}$ in $L^{1}_{loc}(\R^{n})$, as $r\to 0$. By Proposition~\ref{prop:semicomp} (i) we have 
\[
\liminf_{r\to 0}\frac{P(E;B_r(x))}{r^{n-1}} = \liminf_{r\to 0} P(E_{r};B_1(0)) \geq P(H;B_1(0)) = \om_{n-1}\,,
\]
thus to prove~\eqref{densitadisco} we only need to show that
\begin{equation}\label{upperdensitadisco}
\limsup_{r\to 0}P(E_{r};B_1(0))\leq \om_{n-1}\,.
\end{equation}
Let us assume by contradiction that there exists $\e>0$ and a sequence of radii $r_{i}\to 0$ as $i\to\infty$, such that setting $E_{i} = E_{r_{i}}$ and $\Om_{i} = r_{i}^{-1}(\Om-x)$ we have for all $i\in \N$
\begin{equation}\label{assurdodensitadisco}
P(E_{i};B_1(0))\geq \om_{n-1}+\e\,.
\end{equation}
Notice that, for $i$ large enough, one has
\begin{equation}\label{omegaokkei}
P(\Om_{i};B_s(0)) \leq s^{n-1}(\om_{n-1} + \e/3)\qquad \text{for all } 1<s<2\,.
\end{equation}
Since $\chi_{E_{i}}\to \chi_{H}$ in $L^{1}(B_2(0))$ as $i\to \infty$, by the Coarea formula we find some
\[
t\in\bigg(1,\left(\frac{\om_{n-1}+\e/2}{\om_{n-1}+\e/3}\right)^{\frac{1}{n-1}}\bigg)
\]
such that we have
\begin{gather}
\label{urka1}
P(\Omega_{i};\de B_t(0))=P(E_{i};\de B_t(0)) = 0\,,\\
\label{urka2}
\big|E_{i}\difsim \Om_{i}\cap B_2(0)\big| < \e^3 \,,\\ 
\label{urka3}
\Hau^{n-1}\Big(E_{i}\difsim \Om_{i} \cap \de B_t(0)\Big) < \frac{\e}{4}\,.
\end{gather}
Consider now the set
\[
\widehat F_{i} = \big(E\setminus B_{tr_{i}}(x)\big) \cup \big(\Om\cap B_{tr_{i}}(x)\big)\,,
\]
and notice that by~(\ref{urka1}) and~(\ref{urka3}) one has
\begin{equation}\label{perFi}
P(\widehat F_{i};\Om) = P\big(E;\Om\setminus B_{tr_{i}}(x)\big) + P(\Om;B_{tr_{i}}(x))
+r_i^{n-1}\Hau^{n-1}\Big(E_{i}\difsim \Om_{i} \cap \de B_t(0)\Big)\,,
\end{equation}
while by~(\ref{urka2})
\[
\Big|\big|\widehat F_i\big| - \big|\widehat E\big|\Big| \leq \eps^3 r_i^n\,.
\]
If now $F_i$ is a set with the same volume as $E$ obtained, as before, with a small adjustement of $\widehat F_i$ far from $B_{2r_i}(x)$, we get
\begin{equation}\label{15}
P(F_i)\leq P(\widehat F_i) + c_1 \eps^3 r_i^n\,.
\end{equation}
Finally, combining~\eqref{assurdodensitadisco}, \eqref{omegaokkei}, \eqref{urka1}, \eqref{urka3}, \eqref{perFi} and~\eqref{15} with the minimality of $E$ and the bound on $t$, we get for $r_{i}$ small enough that
\[\begin{split}
r_{i}^{n-1}(\om_{n-1}+\e) &\leq P(E;B_{r_{i}}(x))\leq P(E;B_{tr_{i}}(x))
\leq (tr_{i})^{n-1}(\om_{n-1} + \e/3) + \frac{\e}{4} r_{i}^{n-1} + c_{1} \e^3 r_{i}^{n}\\
&\leq (tr_{i})^{n-1}(\om_{n-1} + \e/3) + \frac{\e}{2} r_{i}^{n-1}< r_{i}^{n-1}(\om_{n-1} + \e)\,,
\end{split}\]
which is a contradiction. This proves~\eqref{upperdensitadisco}, thus~\eqref{densitadisco}. To show that $x\in \de^{*}E$ and $\nu_{E}(x) = \nu_{\Om}(x)$ we set $v = -\nu_{\Om}(x) = -\nu_{H}(0)$ and, owing to~\eqref{densitadisco}, we just have to show that
\begin{equation}\label{ridotta1}
\lim_{r\to 0} \frac{D\chi_{E}(B_r(x))\cdot v}{\om_{n-1}r^{n-1}} = 1\,.
\end{equation}
In fact, by Theorem~\ref{teo:degiorgi} (iv) we have for almost all $r>0$ that
\begin{equation}\begin{split}\label{dechiEv}
D\chi_{E}(B_r(x))\cdot v &= \int_{E\cap \de B_r(x)} v\cdot N\, d\Hau^{n-1}
= \int_{H\cap \de B_r(0)} v\cdot N\, d\Hau^{n-1} + A(x,r)\\
&= \om_{n-1}r^{n-1} + A(x,r)\,, 
\end{split}\end{equation}
where $N$ is the exterior normal to $\de B_r(x)$ and 
\[\begin{split}
|A(x,r)| &= \left|v\cdot \int_{\de B_r(x)} (\chi_{E}(y) - \chi_{x+H}(y)) N(y)\, d\Hau^{n-1}(y)\right|\\ 
&\leq \int_{\de B_r(x)} |\chi_{E}(y) - \chi_{x+H}(y)| \, d\Hau^{n-1}(y)\,.
\end{split}\]
For any fixed $\delta>0$, we define the set $\Sigma(x,\delta)\subseteq(0,+\infty)$ of radii $r>0$ such that $A(x,r)>\delta r^{n-1}$. Then by the $L^{1}_{loc}$-convergence of $r^{-1}(E-x)$ to the half-space $H$ we infer that
\[
\lim_{\rho\to 0^{+}}\frac{\Hau^{1}(\Sigma(x,\delta)\cap (0,\rho))}{\rho} = 0\,.
\]
Consequently, for any decreasing infinitesimal sequence $(r_{i})_{i}$ we can find another sequence $(\rho_{i})_{i}$ such that $\rho_{i}\notin \Sigma(x,\delta)$ for all $i$ and $\rho_{i} = r_{i}+o(r_{i})$ as $i\to\infty$. Let us now assume by contradiction that~\eqref{ridotta1} does not hold. Then there must exist $\alpha>0$ and a decreasing infinitesimal sequence $(r_{i})_{i}$, such that
\begin{equation}\label{DchiEassurdo}
\left|\frac{D\chi_{E}(B_{r_{i}}(x))\cdot v}{\om_{n-1}r_{i}^{n-1}} - 1\right|\geq \alpha
\end{equation}
for all $i\in \N$. We now choose $\delta = \frac{\alpha}2\om_{n-1}$ and consider the infinitesimal sequence $\rho_{i}$ as above. By~\eqref{dechiEv} with $\rho_{i}$ replacing $r$, we have that
\[
\Big|D\chi_{E}(B_{\rho_{i}}(x))\cdot v  -\om_{n-1}\rho_{i}^{n-1}\Big| = |A(x,\rho_{i})| \leq \frac{\alpha}{2}\om_{n-1} \rho_{i}^{n-1}\,.
\]
On the other hand, by~\eqref{densitadisco} we also have
\[\begin{split}
\Big|D\chi_{E}(B_{\rho_i}(x)) - D\chi_{E}(B_{r_{i}}(x))\Big| &\leq P(E;B_{\rho_{i}}(x)\difsim B_{r_{i}}(x))
\leq \om_{n-1}|\rho_{i}^{n-1} - r_{i}^{n-1}| +  o(r_{i}^{n-1})\\
&= o(r_{i}^{n-1})
\end{split}
\]
as $i\to \infty$. By combining the last two inequalities we get
\[
\Big|D\chi_{E}(B_{r_{i}}(x))\cdot v - \om_{n-1}r_{i}^{n-1}\Big|\leq \frac{\alpha}{2}\om_{n-1} \rho_{i}^{n-1} + o(r_{i}^{n-1}) = \frac{\alpha}{2}\om_{n-1} r_{i}^{n-1} + o(r_{i}^{n-1})\,,
\]
which contradicts~\eqref{DchiEassurdo} for $i$ large enough. This concludes the proof of (vii).
\end{proof}


\section*{Acknowledgements}
This work has been partially supported by ERC Starting Grant 2010 ``AnOptSetCon''. We also thank Carlo Nitsch and Antoine Henrot for their useful comments.

\end{document}